%% file: arxiv_article.tex
\documentclass{m2an}
\usepackage{mathtools}
\usepackage{dsfont}

\usepackage{lipsum}
\usepackage{amsfonts}
\usepackage{amssymb}
\usepackage{graphicx}
\usepackage{epstopdf}
\usepackage{algorithmic}
\usepackage{svg}
\usepackage{hyperref}
\usepackage{cleveref}
\usepackage{subcaption}
\usepackage{lmodern}
\usepackage{ifoddpage}
\usepackage{orcidlink}
\usepackage{amsopn}

\usepackage{comment}
\usepackage{url}
\newcommand{\vecn}{\textbf{n}}
\newcommand{\real}[1]{\ensuremath{\text{Re}\left\{ #1 \right\}}}

\newcommand{\trace}[2]{#2}

\def\ts{\tilde{s}}

\def\tb{\tilde{b}}
\def\teta{\tilde{\eta}}

\def\tu{\tilde{u}}
\def\tut{\tilde{u}_t}
\def\tutt{\tilde{u}_{tt}}

\def\ut{u_t}
\def\utt{u_{tt}}

\newcommand{\compemb}{\mathrel{\mathpalette\comp@emb\relax}}
\newcommand{\un}{u_n}
\def\unt{u_{n\,t}}
\def\untt{u_{n\,tt}}
\def\CPF{C_{\text{PF}}}
\def\Comega{C_{\Omega}}
\def\Ctrace{C_{\text{trace}}}
\newcommand{\g}{g}
\def\vx{\vec{x}}
\def\vxd{\vec{x}^{\,\dag}}
\def\vxz{\vec{x}^{\,0}}

\def\remainder{v}
\def\solspace{U}
\def\hh{\tilde{f}}

\def\du{(du)}
\def\hdu{\widehat{(du)}}
\def\hu{\widehat{u}}

\theoremstyle{plain}
\newtheorem{theorem}{Theorem}[section]
\newtheorem{lemma}[thrm]{Lemma}

\newtheorem{remark}[thrm]{Remark}

\begin{document}
\title{
Multi parameter identification in the nonlinear periodic Westervelt equation
}
\thanks{This research was funded in part by the Austrian Science Fund (FWF) [10.55776/P36318]. }
\author{Benjamin Rainer}
\address{Department of Mathematics, University of Klagenfurt, Austria, Carinthia (\email{benjamin.rainer@aau.at})\\ 
Austrian Institute of Technology GmbH, Vienna, Austria (\email{benjamin.rainer@ait.ac.at})}

\author{Barbara Kaltenbacher}
\address{Department of Mathematics, University of Klagenfurt, Austria, Carinthia (\email{barbara.kaltenbacher@aau.at}.)}
\date{}
\begin{abstract} 
\input{abstract}
\end{abstract}
%
%
\subjclass{35R30, 35L70}
\keywords{inverse problem, parameter identification, Westervelt}
\maketitle

\input{article}

\bibliographystyle{siamplain}
\bibliography{references}
\section{Supplemental Material}
\input{supplemental_material}
\end{document}

%% file: abstract.tex
 Nonlinear ultrasound imaging leverages harmonic wave generation to enhance contrast and spatial resolution beyond the capabilities of conventional linear techniques. This behavior is commonly modeled by the Westervelt equation, which captures finite-amplitude acoustic wave propagation in heterogeneous media. In this work, we investigate an inverse problem for a periodic nonlinear Westervelt equation in $\mathbb{R}^d$, where $d\in\{2,3\}$ with spatially varying coefficients and Robin-type boundary conditions. The objective is to simultaneously reconstruct the sound speed, diffusivity, and nonlinearity parameters from (partial) boundary measurements. We first establish the Fréchet differentiability of the forward solution operator with respect to the unknown parameters, providing a rigorous analytical foundation for parameter identification. To address uniqueness, we introduce a reference-state framework and prove linearized uniqueness of an all-at-once forward operator without requiring the reference states to satisfy the governing equation. Building on these results, we develop an iterative reconstruction scheme based on a frozen Newton-type method, supported by an exact range invariance property. Numerical simulations are presented to illustrate the feasibility and performance of the proposed approach. 

%% file: article.tex
\section{Introduction}
Ultrasound imaging has undergone significant improvements through the exploitation of nonlinear acoustic phenomena~\cite{medicalUltrasoundImaging}. Unlike conventional linear imaging, which relies solely on the fundamental frequency, nonlinear ultrasound enables enhanced contrast and spatial resolution, leading to more accurate diagnosis and visualization in medical applications. The nonlinear response of acoustic waves in biological tissues~\cite{duck2002nonlinear} and artificial contrast agents, such as microbubbles, plays a central role in these advancements. When subjected to sufficiently high acoustic pressures, the compressibility of tissue and the oscillatory behavior of microbubbles become amplitude-dependent, giving rise to harmonic components in the propagated wavefield.

Harmonic imaging techniques that utilize these higher-order harmonics—rather than the fundamental frequency—offer several advantages~\cite{ANVARI2015, BURNS2000S19}. In particular, they improve lateral resolution and reduce imaging artifacts such as side and grating lobes, which commonly degrade image quality in conventional ultrasound. These benefits have motivated extensive study of nonlinear propagation models, among which the Westervelt equation serves as a widely accepted description of finite-amplitude sound propagation in fluids and soft tissues~\cite{karamalis2010fast,westervelt_deriv,solovchuk2013simulation}. There has been significant progress in studying the well-posedness of the periodic nonlinear Westervelt equation and its multiharmonic expansion as well as its numerical analysis in two and three space dimensions for bounded, open, and connected domains with sufficiently smooth boundary~\cite{Kaltenbacher21, RK2025}. The Westervelt equation for the real-valued acoustic pressure $p$ reads
\begin{equation}
\label{eq:westervelt}
    \frac{1}{c(x)^2} p_{tt} - \Delta p - \frac{\mathfrak{b}(x)}{c(x)^2} \Delta p_{t} = \frac{\beta_a(x)}{2\rho_0 c(x)^4} (p^2)_{tt},
\end{equation}
where $\beta_a(x) = 1 + \frac{B}{2A}(x)$ is the non-linearity parameter, $\mathfrak{b}(x)$ the diffusivity, $c(x)$ the speed of sound, and $\rho_0$ denotes the mass density, and we notationally emphasize space dependence of coefficients in the PDE. A physically meaningful assumption is to impose strict positivity on the speed of sound $c(x)$ and on $\mathfrak{b}(x)$ as well as non-negativity on $\beta_a(x)$.  Hence, multiplying~\eqref{eq:westervelt} by $c^2(x) > 0$ and dividing by $\mathfrak{b}(x) > 0$ is justified. Setting
\begin{equation}\label{etabs}
\eta(x) :=\frac{\beta_a(x)}{2 \rho_0 c^2(x) \mathfrak{b}(x)}, \quad
b(x) := \frac{1}{\mathfrak{b}(x)}, \quad
s(x)=\frac{c^2(x)}{\mathfrak{b}(x)},
\end{equation} 
and equipping~\eqref{eq:westervelt} with Robin boundary conditions we obtain the periodic nonlinear Westervelt equation on an open and connected domain $\Omega \subset\mathbb{R}^d$, $d\in\{2,3\}$ with $\text{C}^{1,1}$ boundary
\begin{equation}
\label{eq:westervelt:quadratic-nonlinearity:periodic}
\begin{cases}
b(x) p_{tt}(t,x) - s(x) \Delta p(t,x) - \Delta p_t(t,x) = \eta(x) (p(t,x)^2)_{tt} + f(t,x) & \text{in}\, (0,T) \times \Omega,\\
\beta(x) p_t(t,x) + \gamma(x) p(t,x) + \nabla p(t,x) \cdot \vecn = g(t,x) & \text{on}\,  (0,T) \times \partial\Omega, \\
p(0,x) = p(T,x), \, p_t(0,x) = p_t(T,x) & x \in \Omega, 
\end{cases}   
\end{equation}
where 
$\beta,\gamma \geq  0$ are the parameters for specifying absorbing or impedance conditions on $\partial\Omega$, and $\vecn$ denotes the outer normal on $\partial \Omega$. 

The available observations in this application context are measurements of the pressure at an array of transducers attached to the boundary of the computational domain $\Omega$
\begin{equation}\label{eq:obs}
h(t,x) = p(t,x),\qquad (t,x) \in (0,T) \times \Sigma,
\end{equation}
where $\Sigma\subseteq\partial \Omega$. 

Hence, our goal is to identify $s$, $b$ and $\eta$ from the Dirichlet traces of the solutions to \eqref{eq:westervelt:quadratic-nonlinearity:periodic} obtained from three different sources.
The actual physical quantities $c$, $\mathfrak{b}$, and $\beta_a$ can then be obtained from \eqref{etabs}.

Since typically excitation is imposed by the same or another ultrasound transducer array, the sources $g$ in the formulation above  correspond to a Neumann trace -- possibly modified by an impedance term -- 
on the boundary $\partial\Omega$ or part of it, see the second
line of  \eqref{eq:westervelt:quadratic-nonlinearity:periodic}.

A monofrequent source at the fundamental frequency $\omega = \frac{2 \pi}{T}$ can be written as
$g(t,x)=\text{Re}\{\hat{g}(x) e^{\iota \omega t}\}$
$x\in\partial\Omega$. 
As a consequence of nonlinearity, possible solutions to~\eqref{eq:westervelt:quadratic-nonlinearity:periodic} will not only consist of the fundamental frequency but also exhibit contributions at multiples of $\omega$, so-called higher harmonics. 
This also applies to sources that excite at multiple frequencies $\omega_i = \frac{2 \pi}{T_i}$. 

Taking a closer look at~\eqref{eq:westervelt:quadratic-nonlinearity:periodic} and applying the identity $(p(t,x)^2)_{tt} = 2(p_t(t,x)^2 + p(t,x) p_{tt}(t,x))$ we obtain
%
%
\begin{equation}
\nonumber
    (b(x)- 2\eta(x)p(t,x))p_{tt}(t,x) - s(x) \Delta p(t,x) - \Delta p_t(t,x) = 2\eta(x) p_t(t,x)^2.
\end{equation}
This shows that the Westervelt equation degenerates if $b(x)-2\eta(x)p(t,x) = 0$ for some $x \in \Omega$. Hence, it is natural to impose a ``smallness" condition on the source in order to avoid degeneracy. 

There exists a vast corpus of literature that aims to put the value of the non-linearity parameter, in most cases $B/A$, in relation to different tissue types, e.g.,~\cite{nonlinparam1, duck2002nonlinear, nonlinparam3,  nonlinparam2, ZHANG20011359}. Nonlinearity parameter tomography relies on the tissue dependence of $B/A$, but imaging $B/A$ alone would require all other (possibly spatially varying) coefficients to be known, which is typically not the case in applications. Moreover, also sound speed and attenuation coefficient come with their own diagnostic value and are often used as imaging quantity ~\cite{Lucka2022,schmitt2002ultrasound}.
This motivates our aim to simultaneously reconstruct $s$, $b$, and $\eta$ in \eqref{eq:westervelt:quadratic-nonlinearity:periodic}.

In~\cite{Kaltenbacher_2023} it has been shown that one can reconstruct the sound speed and the nonlinearity coefficient from boundary measurements using tailored sources within the domain $\Omega$;
\cite{nonlinearity_imaging_JMGTmulticoeff} considers uniqueness of sound speed, attenuation and nonlinearity coefficient in a third order in time model of nonlinear acoustics.
Also the authors of~\cite{Acosta2026SBETACOMPLEX} study the inverse problem of identifying all three coefficients in the periodic Westervelt equation using single frequency complex sources on the boundary; however their observation setting differs from the one considered here, as they assume the whole Dirichlet-to-Neumann map (rather than just three observations) to be available. On the other hand, they only need two harmonics by showing that the first harmonic suffices to determine the sound speed and the diffusivity coefficient, while adding the second harmonic enables them to determine the nonlinearity coefficient.
Closely related to this, the problem of reconstructing several space dependent coefficients in a system of coupled semilinear Helmholtz equations is studied in \cite{RenSoedjak:2024}.
The appearance of higher harmonics due to nonlinearity, as studied in the above cited  papers is another aspect of the so-called blessing of nonlinearity, that has been highlighted and quantified in, e.g., \cite{KurylevLassasUhlmann2018,LassasLiimatainenPotenciano-MachadoTyni2021}.

\medskip

The contributions of this paper are as follows:
\begin{enumerate}
    \item First, we formulate the inverse problem of identifying the spatially varying parameters $s$, $b$, and $\eta$ from boundary measurements and show Fr\'{e}chet differentiability of the forward operator with respect to these parameters in appropriate function spaces.
    \item Second, we prove linearized uniqueness at well-chosen reference states and reference parameters in an all-at-once  formulation of the inverse problem. The reference states do not necessarily need to solve~\eqref{eq:westervelt:quadratic-nonlinearity:periodic}.
    The three boundary sources applied for this purpose are obtained by an excitation at two different frequencies as well as an amplitude modulation, thus making explicit use of nonlinearity invalidating   the linear superposition principle.  
    \item Third, we formulate an iterative reconstruction scheme using a frozen Newton type method and establish its convergence by means of an exact range invariance property of the all-at-once forward operator as well as its linearized injectivity (see the second item).
    \item Fourth, we show some numerical experiments illustrating the developed theory.
\end{enumerate}

\medskip

The paper is organized as follows. We start by defining the inverse problem in~\cref{sec:inverse:problem} and study well-posedness and Fréchet differentiability of the underlying forward operator. 
Our main results on linearized uniqueness and convergence of a frozen Newton scheme can be found in~\cref{sec:recon:inverse:problem}, numerical experiments in \cref{sec:experiments}, and a discussion and conclusions follow in
\cref{sec:conclusions}. 

\medskip

Notation. In the rest of this paper we skip the dependence on time and space whenever it is apparent from the context.

\section{The inverse and forward problem}
\label{sec:inverse:problem}
%
%
To formulate the inverse problem as an operator equation 
\[
F(s,b,\eta)=y,
\]
we introduce the parameter-to-state map $S: \mathcal{D}(F) \rightarrow \solspace$, $(s,b,\eta) \mapsto u$ where $u$ solves~\eqref{eq:westervelt:quadratic-nonlinearity:periodic}, the observation operator 
defined by the Dirichlet trace
$\text{tr}_\Sigma: v \mapsto v\vert_\Sigma$ 
and the forward operator $F = \text{tr}_\Sigma \circ S: (s, b, \eta) \mapsto \text{tr}_\Sigma(u)$.
The domain of $F$ is defined as
\begin{equation}\label{eq:defX}
\begin{aligned}    
    \mathcal{D}(F) := \{(s,b,\eta) :  s , b, \tfrac{1}{s}, \tfrac{1}{b} \in L^\infty(\Omega)^+,\, \eta \in L^\infty(\Omega)\} \subseteq 
    X:=L^\infty(\Omega)^3
\end{aligned}
\end{equation}
where we define $L^\infty(\Omega)^+ := \{v \in L^\infty(\Omega) : v > 0 \text{ a.e. in } \Omega\}$. 
The location, shape and 
amplitude 
of exterior (supported on the boundary $\partial \Omega$) and interior (supported in the domain $\Omega$) sources $f$ and $g$ are assumed to be known. 
A natural space for observations of the pressure over time on $\Sigma$ that are possibly contaminated with random noise is 
\begin{equation}\label{eq:defY}
Y = \begin{cases}L^p(0,T;L^q(\Sigma))\text{ if $\Sigma$ is a smooth compact manifold,}\\
L^p(0,T;l^q(\Sigma))\text{ if $\Sigma$ is a discrete set}, 
\end{cases}
p,q\in[1,\infty].
\end{equation}
The PDE solution space $\solspace$ must therefore continuously be mapped into $Y$ by the trace operator $\text{tr}_\Sigma$ (applied point wise in time on $(0,T)$).
Its definition (cf. \eqref{eq:defH} below) results from the following analysis.
\subsection{Well-posedness of the parameter-to-state-map and Fr\'{e}chet differentiability of the forward operator}

To study Fr\'{e}chet differentiability of the forward operator, we begin our investigation by considering the difference $\text{tr}_\Sigma(w) := F(\ts,\tb,\teta) - F(s,b,\eta)$, where with $ds := \ts  - s $, $db:= \tb  - b $, $d\eta := \teta  - \eta $, $w:= \tu - u$ solves
\begin{align}
\label{eq:westervelt:error:pde}
    (b - 2\eta  u) w_{tt} - s \Delta w - \Delta w_t - &    2\eta  (\ut +\tut) w_t - 2 \eta  \tutt w  \\ \nonumber
    &= ds\Delta \tu - db \tutt + d \eta(\tu^2)_{tt}.
\end{align}
The linearity and continuity of the trace operator yields the formal linearisation 
\[
\text{tr}_{\Sigma} \du = F^\prime(s, b, \eta)(ds, db, d\eta)
\]
in $u$, where $\du$ solves
\begin{align}
\label{eq:westervelt:frechet:derivative}
     (b - 2\eta  u) \du_{tt} - s  \Delta \du - \Delta \du_t -& 4\eta  u_t \du_t \\ \nonumber
    &- 2 \eta  \utt \du =  ds \Delta u - db \utt + d\eta(u^2)_{tt}
\end{align}
The remainder reads $\text{tr}_{\Sigma} (\remainder):= F(\ts,\tb,\teta) - F(s,b,\eta) - F^\prime(s,b,\eta)(ds, db, d\eta)$, where $\remainder$ solves
\begin{align}
\label{eq:Westervelt:frechet:derivative:remainder}
    &(b - 2\eta  u) \remainder_{tt} - s  \Delta \remainder - \Delta \remainder_t - 4\eta u_t \remainder_t - 2 \eta  \utt \remainder \\ \nonumber
    &= d\eta [(\tu^2)_{tt} - (u^2)_{tt}] + ds\Delta w - db w_{tt} - 2\eta  w_t^2 - 2 \eta  w w_{tt},
\end{align}
The PDEs \eqref{eq:westervelt:error:pde}, \eqref{eq:westervelt:frechet:derivative},
\eqref{eq:Westervelt:frechet:derivative:remainder} are equipped with homogeneous absorbing / impedance and T-periodicity conditions
\begin{equation}\label{eq:hombndy-Tperiodicity}
\begin{aligned}
&\beta \du_t + \gamma \du + \nabla \du \cdot \vecn = 0\text{ on }(0,T)\times\partial\Omega\\
&\du(0,x) = \du(T,x), \quad \du_t(0,x) = \du_t(T,x), \quad x\in\Omega
\end{aligned}
\end{equation}
and likewise for $w$ and $\remainder$.
%
Our goal is to show that $\lim_{(ds,db,d\eta) \rightarrow 0} \frac{\|\remainder\|_\solspace}{\|(ds,db,d\eta)\|_X} = 0$. For this purpose, we have to study the generalized linear periodic Westervelt equation with space-time dependent coefficients and space dependent model parameters which we do in the following theorem.
%
%
\begin{theorem}
\label{thm:two:sources:westervelt:linear:solution:existence:uniqueness:mixed:boundary:space:dependent}
    Let $\Omega \subseteq \mathbb{R}^d$, $d \in \{2,3\}$, open, bounded, connected, with its boundary $\partial\Omega \in C^{1,1}$, $T > 0$, $\beta$, $\frac{1}{\beta}$, $\gamma \in L^\infty(\partial\Omega)^+$, $\alpha, \frac{1}{\alpha} \in L^\infty (0,T;L^\infty(\Omega))\cap W^{1,\infty}(0,T,L^{\infty}(\Omega))$, $\alpha(0) = \alpha(T)$, $b, \frac{1}{b}, c, \frac{1}{c} \in L^\infty(\Omega)$, $b,c>0$, $\delta \in C([0,T];L^\infty(\Omega))$, $\mu \in C([0,T];L^{2q/(q-1)}(\Omega))$, $ q \in [1,\infty)$, $\mu$ and $\delta$ being $T$-periodic. The sources are given by $f \in L^2(0,T;L^2(\Omega))$ and 
    $g \in H^{1}(0,T;H^{1/2}(\partial \Omega))$. 
Moreover, assume that there exists a constant $\nu>0$ such that 
    \begin{enumerate}
        \item $ \nu < \tfrac{\gamma}{\beta} $ a.e. on $\partial \Omega$,
        \item $\sqrt{2 C_{\text{PF}}} C_{H^{3/2} \rightarrow L^{2q}} C_\Omega \|\nu - \tfrac{\mu}{\alpha}\|_{L^\infty(0,T;L^{2q/(q-1)}(\Omega))} < \tfrac{\mathfrak{b}}{\alpha}$ a.e. in $(0,T) \times \Omega$,
        \item and $\|(\tfrac{1}{2\alpha})_t(c^2 + \mathfrak{b})\|_{L^\infty(0,T;L^\infty(\Omega))}^2 + \sqrt{2 C_a}\|\tfrac{\delta}{\alpha}\|_{L^\infty(0,T;L^\infty(\Omega))} < \tfrac{\nu c^2}{2\alpha}$ a.e. in $(0,T) \times \Omega$, where $C_a := \tfrac{1}{\lambda_1^2}$ and $\lambda_1$ denotes the smallest eigenvalue of the negative impedance Laplacian $-\Delta_\gamma$.
    \end{enumerate}  
    
    Then there exists a unique (weak) solution $u$ of
\begin{equation}
\label{eq:two:sources:westervelt:linear:periodic:space:dependent}
\begin{cases}
\alpha u_{tt} - c^2 \Delta u - \mathfrak{b} \Delta u_t + \mu u_t + \delta u= f & \text{in } (0,T) \times \Omega,\\
\beta u_t + \gamma u + \nabla u \cdot \vecn = g  & \text{on } (0,T) \times \partial\Omega, \\
u(0) = u(T), \, u_t(0) = u_t(T) & \text{in } \Omega, \\
\end{cases}   
\end{equation}
%
with 
\begin{equation}\label{eq:defH}
\begin{aligned}
    u \in \solspace:=\{v \in H^2(0,T;L^2(\Omega)) \cap H^1(0,T;H^{3/2}(\Omega)) \cap L^2(0,T;H^2(\Omega)):  \\ 
    \| \nabla\trace{\Omega}{v} \cdot \vecn \|_{H^1(0,T;L^2(\partial\Omega))} < \infty, v(0) = v(T), v_t(0) = v_t(T) \text{ a.e.}\}
\end{aligned}
\end{equation}
and the solution $u$ satisfies
\begin{align}
    \nonumber
    \|u\|_\solspace^2 = & \|u\|_{H^2(0,T;L^2(\Omega))}^2 + \|u\|_{H^1(0,T;H^{3/2}(\Omega))}^2 \\ \nonumber
    & + \|u\|_{L^2(0,T;H^2(\Omega))}^2 + \| \nabla\trace{\Omega}{u} \cdot \vecn \|_{H^1(0,T;L^2(\partial\Omega))}^2 \\ \nonumber
& \leq C(C_\alpha, b,\gamma,c,\beta, T, \Omega)^2 \left( \| f \|_{L^2(0,T;L^2(\Omega))} + \| g \|_{ H^1(0,T;H^{1/2}(\partial\Omega))} \right)^2 ,
\end{align}
where $C(C_\alpha, b,\gamma,c,\beta, T, \Omega) > 0$. 
\end{theorem}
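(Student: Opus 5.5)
We will argue by a Faedo--Galerkin approximation combined with energy estimates that exploit the strong damping $-\mathfrak{b}\Delta u_t$ and the periodicity. First we reduce to homogeneous boundary data. We lift $g$ by a function $u_g\in H^1(0,T;H^{2}(\Omega))$, obtained by solving pointwise in time an elliptic Robin/Neumann problem with data in $H^{1/2}(\partial\Omega)$; elliptic regularity on the $C^{1,1}$ domain controls $u_g$ by $\|g\|_{H^1(0,T;H^{1/2}(\partial\Omega))}$. Alternatively we keep $g$ in the weak form as a boundary functional. We then take the eigenfunctions of the impedance Laplacian $-\Delta_\gamma$ (eigenvalues $\lambda_k\ge\lambda_1>0$) as Galerkin basis. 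For each $n$ we must solve a linear ODE system with $T$-periodic coefficients under periodicity conditions. Since the system is linear, a periodic solution exists and is unique once the homogeneous periodic problem admits only the trivial solution. This follows from the a priori estimate below applied with $f=g=0$ (Fredholm alternative for the monodromy map $I-\Phi(T)$).

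The core of the proof is the uniform a priori estimate. We test the equation with $u_t+\nu u$, or, equivalently after dividing by $\alpha$, with $\frac{1}{\alpha}$-weighted multipliers, and integrate over $(0,T)$. Periodicity makes all total time derivatives vanish, e.g.\ $\int_0^T\frac{d}{dt}\big(\tfrac12\|\sqrt\alpha u_t\|^2+\tfrac12\|c\nabla u\|^2\big)=0$. Only commutator terms involving $(\frac{1}{2\alpha})_t(c^2+\mathfrak b)$ remain, and these are controlled by condition 3. Integrating by parts in space, the boundary terms produce $\int_{\partial\Omega}\beta u_t^2+\nu\gamma u^2$. Combining the $\nu$-multiple with $\beta u_t$ yields the coercive boundary term, using $\nu<\gamma/\beta$ (condition 1). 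The damping gives $\mathfrak b\|\nabla u_t\|^2$, and the term $\nu\alpha\|u_t\|^2$ has to be absorbed there. The lower-order terms are handled as follows. We bound $\int\mu u_t(u_t+\nu u)$ by Hölder with $\mu\in L^{2q/(q-1)}$, the Sobolev embedding $H^{3/2}\hookrightarrow L^{2q}$ and Poincaré--Friedrichs, and absorb it into the damping via condition 2. We bound $\int\delta u(u_t+\nu u)$ using $\|u\|\le\lambda_1^{-1}\|\Delta_\gamma u\|$, i.e.\ $C_a$, and absorb it into $\nu c^2\|\nabla u\|^2$ via condition 3. This yields a bound on $\|u\|_{H^1(0,T;H^1)}$ in terms of the data.

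Higher regularity comes next. We test with $-\Delta_\gamma u_t$ (admissible on the Galerkin level, since the basis diagonalises it) and with $u_{tt}$. This gives bounds on $\|u_{tt}\|_{L^2L^2}$, $\|\Delta u_t\|$, hence $u\in H^1(0,T;H^{2})$ by elliptic regularity with the Robin condition. In particular we obtain the stated $H^1(0,T;H^{3/2})$, $L^2(0,T;H^2)$ and normal trace bounds, the last via $\nabla u\cdot\vecn=g-\beta u_t-\gamma u$ and the trace theorem. The term $\mu u_t$ tested with $u_{tt}$ again uses the embedding $H^{3/2}\hookrightarrow L^{2q}$, and $\alpha_t$ terms are handled with $\alpha\in W^{1,\infty}$. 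Finally we pass to the limit by weak compactness. Periodicity is preserved since $u_n(0)-u_n(T)\to0$ weakly via continuity of the embedding into $C([0,T];\cdot)$. Uniqueness follows from the same energy estimate applied to the difference of two solutions.

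The main obstacle is the first-level energy estimate. There the multiplier $u_t+\nu u$ must simultaneously produce coercivity on $\|u\|$ and $\|u_t\|$ without an initial-value anchor, because periodicity removes Gronwall arguments. The smallness conditions 2 and 3 must be arranged exactly so that the $\mu$, $\delta$ and $\alpha_t$ terms are absorbed. I would first try the multiplier $\frac{1}{\alpha}(u_t+\nu u)$, rather than dividing the equation, so that the time derivative of $1/\alpha$ appears precisely in the form $(\frac{1}{2\alpha})_t(c^2+\mathfrak b)$ used in condition 3.
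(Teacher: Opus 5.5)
There is a genuine gap, and it sits in the step everything else rests on: your first-level energy estimate. The principal part $-c^2\Delta u-\mathfrak b\Delta u_t$ is in non-divergence form. The hypotheses only give $c,\mathfrak b\in L^\infty(\Omega)$, and $\alpha$ has no spatial regularity at all; in the intended applications $c^2=s$ may be piecewise constant and $\alpha=b-2\eta u$. Test with $u_t+\nu u$ (or $\tfrac1\alpha(u_t+\nu u)$) and integrate by parts in space. Besides $\tfrac12\tfrac{d}{dt}\|c\nabla u\|^2$ and boundary terms, you get $\int_\Omega u_t\,\nabla(c^2)\cdot\nabla u\,dx$, $\int_\Omega u_t\,\nabla\mathfrak b\cdot\nabla u_t\,dx$, and analogues with $\nabla(1/\alpha)$. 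Your identity $\int_0^T\tfrac{d}{dt}\bigl(\tfrac12\|\sqrt\alpha u_t\|^2+\tfrac12\|c\nabla u\|^2\bigr)dt=0$ silently drops these terms, and under the hypotheses they are not even defined. Since $c^2\Delta u$ and $\mathfrak b\Delta u_t$ carry different coefficients, no weight $m(x)$ puts both into divergence form at once.

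Your higher-regularity step does not rescue this. Testing the undivided equation with $-\Delta u_t$ creates $\nabla\alpha$ from $\alpha u_{tt}$. After dividing by $\alpha$, the terms $(\tfrac{c^2}{2\alpha})_t(\Delta u)^2$ and $\tfrac{\delta}{\alpha}u\,\Delta u_t$ need control of $\Delta u$, including its time mean, which $\Delta u_t$ cannot see under periodicity.

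The missing idea is the paper's multiplier: divide by $\alpha$ and test with $-\Delta(u_t+\tfrac\nu2u)$, i.e.\ $-\tfrac1\alpha\Delta v$ with $v=u_t+\tfrac\nu2u$ in the weak form. This works at the strong level from the start.
\begin{itemize}
\item The coefficients then appear only as pointwise weights on products of $\Delta u$ and $\Delta u_t$, and only the time derivative $(\tfrac1{2\alpha})_t$ is needed.
\item The now coefficient-free $\int u_{tt}(-\Delta u_t)$ gives $\tfrac12\tfrac{d}{dt}\|\nabla u_t\|^2$ plus a boundary term. There $u_{tt}$ is replaced using the time-differentiated boundary condition divided by $\beta$, which is where $1/\beta\in L^\infty$ and $\nu<\gamma/\beta$ enter.
\item Condition 3 balances the coercivity $\tfrac{\nu c^2}{2\alpha}$ of $\|\Delta u\|^2$ against $(\tfrac1{2\alpha})_t(c^2+\mathfrak b)(\Delta u)^2$ and against the $\delta$-term, controlled via $\|u\|\le\lambda_1^{-1}\|\Delta u\|$.
\item Condition 2 balances the coercivity $\tfrac{\mathfrak b}{\alpha}$ of $\|\Delta u_t\|^2$ against $\int(\nu-\tfrac\mu\alpha)u_t\,\Delta u_t$. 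That term is bounded through $H^{3/2}\hookrightarrow L^{2q}$ and the elliptic-regularity constant $C_\Omega$ in $\|u_t\|_{H^{3/2}}\le C_\Omega(\|\Delta u_t\|+\|u_t\|)$.
\end{itemize}
Neither condition fits an $H^1$-level argument. This single estimate replaces both of your levels. The bound on $u_{tt}$ then follows from the equation, and $L^2(0,T;H^2(\Omega))$ from elliptic regularity with Robin data $g-\beta u_t$.

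Three smaller remarks:
\begin{itemize}
\item Lifting $g$ cannot remove the dynamic term $\beta u_t$. With the Robin eigenbasis the boundary condition has to be kept in weak form; the paper introduces an auxiliary boundary unknown for this.
\item Your claim $u\in H^1(0,T;H^2(\Omega))$ would need the Robin data $g_t-\beta u_{tt}$ to lie in $L^2(0,T;H^{1/2}(\partial\Omega))$, which no estimate provides. This is why the theorem only asserts $H^1(0,T;H^{3/2}(\Omega))$.
\item Your Fredholm-alternative argument for the periodic Galerkin ODE is fine, and more explicit than the paper's appeal to Floquet--Lyapunov, once it rests on the correct a priori estimate.
\end{itemize}
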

\begin{proof}
    See supplemental material~\ref{appendix:proof:theorem:linear:well-posedness}.
\end{proof}
\begin{remark}
    For $\beta = 0$ on the boundary $\partial \Omega$, Theorem~\ref{thm:two:sources:westervelt:linear:solution:existence:uniqueness:mixed:boundary:space:dependent} is still valid under analogous smallness conditions on the parameters, see \cite{periodicJMGT}. 
\end{remark}
%
%
With the help of Theorem~\ref{thm:two:sources:westervelt:linear:solution:existence:uniqueness:mixed:boundary:space:dependent} we first of all show that there exists a unique solution to~\eqref{eq:westervelt:quadratic-nonlinearity:periodic} provided the sources fulfill a smallness condition. We summarize this in the next theorem.
\begin{theorem}
\label{thm:westervelt:nonlinear:solution:existence:uniqueness:source:boundary}
Let $\Omega \subseteq \mathbb{R}^d$, $d \in \{2,3\}$, open, bounded, connected, with its boundary $\partial\Omega \in C^{1,1}$, $\beta$, $\gamma$, $T > 0$, $b, \frac{1}{b}, s, \frac{1}{s} \in L^\infty(\Omega)$, $b,s>0$. Then there exists $\Lambda > 0$ such that for all 
$\hh \in L^2(0,T;L^2(\Omega))$, 
$g\in H^1(0,T;H^{1/2}(\partial\Omega))$ 
with $\|\hh\|_{L^2(0,T;L^2(\Omega))} 
+ \|g\|_{H^1(0,T;H^{1/2}(\partial\Omega))} 
\leq \Lambda$ there exists a unique (weak) solution $u \in \solspace$ of
\begin{equation}
\label{eq:westervelt:nonlinear:periodic}
\begin{cases}
bu_{tt} - s \Delta u - \Delta u_t = \eta (u^2)_{tt} + \hh & \text{in }  (0,T) \times \Omega,\\
\beta u_t + \gamma u + \nabla u \cdot \vecn = g & \text{on } (0,T) \times \partial\Omega, \\
u(0) = u(T), \, u_t(0) = u_t(T) & \text{in } \Omega, \\
\end{cases}   
\end{equation}
and the solution $u$ satisfies
\begin{align}
    \nonumber
    \|u\|_\solspace^2 = & \|u\|_{H^2(0,T;L^2(\Omega))}^2 + \|u\|_{H^1(0,T;H^{3/2}(\Omega))}^2 + \\ \nonumber
    &\|u\|_{L^2(0,T;H^2(\Omega))}^2 + \| \nabla\trace{\Omega}{u} \cdot \vecn \|_{H^1(0,T;L^2(\partial\Omega))}^2 \\ \nonumber
    & \leq C(C_\alpha, b,\gamma,c,\beta, T, \Omega)^2 (\|\hh\|_{L^2(0,T;L^2(\Omega))} + \| g \|_{ H^1(0,T;H^{1/2}(\partial\Omega))})^2,
\end{align}
where $C(C_\alpha, b,\gamma,c,\beta, T, \Omega) > 0$. 
\end{theorem}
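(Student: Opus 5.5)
We prove the theorem by a Banach fixed point argument on a small ball of $\solspace$, using Theorem~\ref{thm:two:sources:westervelt:linear:solution:existence:uniqueness:mixed:boundary:space:dependent} as the linear solver. Rewriting the nonlinearity as $\eta(u^2)_{tt} = 2\eta u u_{tt} + 2\eta u_t^2$, we put the linear terms in the form of \eqref{eq:two:sources:westervelt:linear:periodic:space:dependent}. Concretely, for given $v$ in
\[
B_R := \{v \in \solspace : \|v\|_\solspace \le R\},
\]
define $u = \mathcal{T}(v)$ as the solution of
\[
(b - 2\eta v) u_{tt} - s\Delta u - \Delta u_t - 2\eta v_t u_t = \hh
\]
with boundary data $g$ and $T$-periodicity. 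This is \eqref{eq:two:sources:westervelt:linear:periodic:space:dependent} with $\alpha = b - 2\eta v$, $c^2 = s$, $\mathfrak{b} = 1$, $\mu = -2\eta v_t$ and $\delta = 0$.

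A fixed point of $\mathcal{T}$ solves \eqref{eq:westervelt:nonlinear:periodic}. An alternative is to freeze the whole right-hand side $\eta(v^2)_{tt}$ and keep the constant-coefficient operator; this is simpler for the self-map estimate, but it requires $(v^2)_{tt}\in L^2L^2$, which fails because $v v_{tt}$ is only $L^2L^2$ when $v\in L^\infty L^\infty$. I therefore keep $v u_{tt}$ on the left, i.e.\ use the quasilinear freezing above, and treat $v_t u_t$ through the $\mu$ term.

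\textbf{Step 1: the coefficients are admissible for $v \in B_R$ with $R$ small.} The embedding $\solspace \hookrightarrow H^1(0,T;H^{3/2}(\Omega)) \hookrightarrow C([0,T];H^{3/2}(\Omega)) \hookrightarrow C([0,T];L^\infty(\Omega))$ (valid since $d\le 3$) gives $\|2\eta v\|_{L^\infty L^\infty} \le C\|\eta\|_\infty R$. Hence $\alpha$ and $1/\alpha$ are bounded once $R < b_{\min}/(2C\|\eta\|_\infty)$. For the time derivative of $\alpha$, $v_t \in H^1(0,T;L^2) \cap L^2(0,T;H^{3/2})$; interpolation gives $v_t \in L^\infty(0,T;H^{3/4}) \hookrightarrow L^\infty(0,T;L^{4})$. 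This suffices for $\mu = -2\eta v_t \in C([0,T];L^{2q/(q-1)})$ with $q=2$, since $2q/(q-1)=4$. The requirement $\alpha \in W^{1,\infty}(0,T;L^\infty)$, however, asks $v_t \in L^\infty L^\infty$, which $\solspace$ does not give directly. I would either check that the proof of Theorem~\ref{thm:two:sources:westervelt:linear:solution:existence:uniqueness:mixed:boundary:space:dependent} only uses $\alpha_t$ in a product with $u_t$ estimated in a weaker Lebesgue norm, or work in the slightly smaller space implicit in the energy estimate. \textbf{This regularity mismatch for $\alpha_t$ is the main obstacle.}

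\textbf{Step 2: the smallness conditions hold.} Conditions (1)–(3) of Theorem~\ref{thm:two:sources:westervelt:linear:solution:existence:uniqueness:mixed:boundary:space:dependent} hold for small $R$. For (2), choose $\nu$ from (1) small and note that $\|\mu/\alpha\| = O(R)$. For (3), $\delta = 0$ and $\|(1/(2\alpha))_t\| = O(R)$, so the left side is $O(R^2) < \nu s_{\min}/(2\|\alpha\|_\infty)$. In particular, the constant $C$ of the linear theorem is uniform over $B_R$.

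\textbf{Step 3: self-mapping.} The linear estimate gives
\[
\|\mathcal{T}v\|_\solspace \le C(\|\hh\| + \|g\|) \le C\Lambda \le R,
\]
provided $\Lambda \le R/C$.

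\textbf{Step 4: contraction.} Let $u_i = \mathcal{T}v_i$. The difference $\bar u = u_1 - u_2$ solves the linear problem with coefficients from $v_1$, homogeneous boundary data and right-hand side
\[
2\eta(v_1 - v_2) u_{2,tt} + 2\eta(v_{1,t} - v_{2,t}) u_{2,t}.
\]
The first term is bounded in $L^2L^2$ by $\|v_1-v_2\|_{L^\infty L^\infty}\|u_{2,tt}\|_{L^2L^2}$. The second is bounded by $\|v_{1,t}-v_{2,t}\|_{L^\infty L^4}\|u_{2,t}\|_{L^2L^4}$, and the embedding $H^{3/2}\hookrightarrow L^4$ controls the $L^4$ norm of $u_{2,t}$. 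Hence
\[
\|\bar u\|_\solspace \le C\|\eta\|_\infty R\,\|v_1 - v_2\|_\solspace,
\]
which is a contraction for $R$ small. The limit inherits periodicity by closedness of $B_R$ in $\solspace$, since $\solspace$ is complete.

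\textbf{Step 5: conclusion.} Banach's fixed point theorem gives existence, and uniqueness within $B_R$. Steps 3–4 also show that the solution satisfies the stated estimate, with $C$ the uniform linear constant from Step 2.
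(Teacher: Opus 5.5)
There is a genuine gap, and it is the one you flag yourself but leave open. With the quasilinear freezing $\alpha=b-2\eta v$, Theorem~\ref{thm:two:sources:westervelt:linear:solution:existence:uniqueness:mixed:boundary:space:dependent} requires $\alpha\in W^{1,\infty}(0,T;L^\infty(\Omega))$. Its condition (3) also requires $\|(\tfrac{1}{2\alpha})_t(c^2+\mathfrak{b})\|_{L^\infty(0,T;L^\infty(\Omega))}$ to be small. Since $(\tfrac{1}{2\alpha})_t=\eta v_t/\alpha^2$, both need $\|v_t\|_{L^\infty(0,T;L^\infty(\Omega))}=O(R)$. Membership in $\solspace$ only gives $v_t\in C([0,T];H^{3/4}(\Omega))\hookrightarrow L^\infty(0,T;L^4(\Omega))$. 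So your claim in Step~2 that this term is $O(R)$ is unjustified.

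Checking the linear proof does not rescue it. There $\alpha_t$ enters through $\int_0^T\!\int_\Omega(\tfrac{1}{2\alpha})_t(c^2+\tfrac{\nu}{2}\mathfrak{b})(\Delta u_n)^2\,dx\,dt$. Since $\Delta u_n$ is controlled only in $L^2(0,T;L^2(\Omega))$, this term cannot be absorbed without an $L^\infty$ bound on $\alpha_t$. Moving to a smaller space with $v_t\in L^\infty(0,T;L^\infty(\Omega))$ would need a higher-order version of the linear theorem (and more regular data) to keep the self-map property, and that is not available here. So Steps~1--2 do not close, and Steps~3--5 fall with them.

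The fix is the route you discarded, which is exactly what the paper does. Your reason for discarding it is wrong: the linear theorem only needs $f\in L^2(0,T;L^2(\Omega))$, and both pieces of $(v^2)_{tt}$ lie there.
\begin{itemize}
\item $\|v\,v_{tt}\|_{L^2(0,T;L^2(\Omega))}\le\|v\|_{L^\infty(0,T;L^\infty(\Omega))}\|v_{tt}\|_{L^2(0,T;L^2(\Omega))}$.
\item $\|v_t^2\|_{L^2(0,T;L^2(\Omega))}\le\sqrt{T}\,\|v_t\|_{L^\infty(0,T;L^4(\Omega))}^2$, by the very embedding you derived in Step~1. The paper uses $H^{1/4}(0,T;H^{9/8}(\Omega))\hookrightarrow L^4(0,T;L^4(\Omega))$ instead.
\end{itemize}
Hence $\|(v^2)_{tt}\|_{L^2(0,T;L^2(\Omega))}\le C'\|v\|_\solspace^2$.

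Now define $u=\mathcal{F}(v)$ by $bu_{tt}-s\Delta u-\Delta u_t=\eta(v^2)_{tt}+\hh$ with the given boundary data. Then $\alpha=b$ is time independent and $\mu=\delta=0$. Condition (3) becomes trivial, (2) holds for $\nu$ small, and the linear constant $C$ is fixed. This gives $\|\mathcal{F}(v)\|_\solspace\le C(C'\|\eta\|_{L^\infty(\Omega)}R^2+\Lambda)\le R$ for $R,\Lambda$ small.

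For the contraction, use the identity $(v_1^2-v_2^2)_{tt}=2[(v_{1,t}+v_{2,t})(v_1-v_2)_t+v_1(v_1-v_2)_{tt}+v_{2,tt}(v_1-v_2)]$. The same product estimates give contraction with factor $O(R)$.

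The stated a priori bound then follows from $\|u\|_\solspace\le C(C'\|\eta\|_{L^\infty(\Omega)}\|u\|_\solspace^2+\|\hh\|_{L^2(0,T;L^2(\Omega))}+\|g\|_{H^1(0,T;H^{1/2}(\partial\Omega))})$, once $CC'\|\eta\|_{L^\infty(\Omega)}R\le\tfrac12$.
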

\begin{proof}
See supplemental material~\ref{appendix:proof:theorem:nonlinear:well-posedness}.
\end{proof}
Indeed, the conditions on the coefficients imposed in Theorem~\ref{thm:two:sources:westervelt:linear:solution:existence:uniqueness:mixed:boundary:space:dependent} are fulfilled in the nonlinear case for arbitrary $s,b \in L^\infty(\Omega)^+$, $\eta \in L^\infty(\Omega)$, $\gamma,\beta \in L^\infty(\partial \Omega)^+$ if the sources fulfill the imposed smallness condition (which actually is coupled to $\eta$). Now, that we have set the stage, we can finally  investigate the remainder $\remainder(t,x)$ for Fr\'{e}chet differentiability of $S$, cf. \eqref{eq:Westervelt:frechet:derivative:remainder}. Let the assumptions of Theorems~\ref{thm:two:sources:westervelt:linear:solution:existence:uniqueness:mixed:boundary:space:dependent} and~~\ref{thm:westervelt:nonlinear:solution:existence:uniqueness:source:boundary} hold. First, we apply Theorem~\ref{thm:two:sources:westervelt:linear:solution:existence:uniqueness:mixed:boundary:space:dependent} to \eqref{eq:Westervelt:frechet:derivative:remainder} and obtain that there exists $C > 0$ such that
\begin{align}
\label{eq:frechet:deriv:remainder:estimate:1}
    \|\remainder\|_\solspace \leq & C \|  d\eta 
    [(\tu+u)w]_{tt} 
    + ds\Delta w - db w_{tt} - 2\eta w_t^2 - 2 \eta w w_{tt} \|_{L^2(0,T;L^2(\Omega))}.
\end{align}
Now it suffices to further estimate the right hand side. We do so by considering the fact that 
%
Theorem~\ref{thm:westervelt:nonlinear:solution:existence:uniqueness:source:boundary} as well as \eqref{eq:westervelt:error:pde} together with 
Theorem~\ref{thm:two:sources:westervelt:linear:solution:existence:uniqueness:mixed:boundary:space:dependent} and Sobolev embeddings imply existence of a constant $C > 0$ such that 
\[
\begin{aligned}
&\|[(\tu+u)w]_{tt}  + ds\Delta w - db w_{tt} - 2\eta w_t^2 - 2 \eta w w_{tt} \|_{L^2(0,T;L^2(\Omega))}\\
&    \leq C \|(ds,db,d\eta)\|_X   (\|\hh\|_{L^2(0,T;L^2(\Omega))} + \|g\|_{H^1(0,T;H^{1/2}(\partial\Omega)}).
\end{aligned}
\]
Hence, estimating the right hand side of~\eqref{eq:frechet:deriv:remainder:estimate:1} we obtain that there exists a constant $\Tilde{C} > 0$ independent of $\remainder, ds, db$, and $d\eta$, such that
\begin{align}
    \|\remainder \|_\solspace\leq \Tilde{C} \|(ds,db,d\eta)\|_X^2 (\|\hh\|_{L^2(0,T;L^2(\Omega))} + \|g\|_{H^1(0,T;H^{1/2}(\partial\Omega))}).
\end{align}
This shows that the remainder is $o(\|(ds,db,d\eta)\|_X)$ and establishes Fr\'{e}chet differentiability of $S$. We summarize our findings in the following theorem.
\def\ts{\Tilde{s}}
\def\tb{\Tilde{b}}
\def\t\kappa{\Tilde{\kappa}}
\begin{theorem}
    Let the assumptions of Theorem~\ref{thm:westervelt:nonlinear:solution:existence:uniqueness:source:boundary} on the parameters and sources be fulfilled. 
    
    Then for any $(s,b,\eta)$ in the interior of $\mathcal{D}(F)$, the Fr\'{e}chet derivative $F^\prime(s,b,\eta)$ of the forward operator $F: \mathcal{D}(F)\subseteq X \rightarrow Y$, cf. \eqref{eq:defX}, \eqref{eq:defY} exists, is unique, and $F^\prime(s,b,\eta)(ds,db,d\eta)=\text{tr}_{\Sigma}\du$ where $\du$ solves \eqref{eq:westervelt:frechet:derivative} with homogeneous absorbing / impedance and $T$-periodicity conditions  \eqref{eq:hombndy-Tperiodicity}.
\end{theorem}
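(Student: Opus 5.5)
The plan is to write $F=\text{tr}_\Sigma\circ S$ and show that $S$ is Fr\'echet differentiable from the interior of $\mathcal{D}(F)\subseteq X$ into $\solspace$, with $S'(s,b,\eta)(ds,db,d\eta)=\du$. Since $\text{tr}_\Sigma$ is linear and bounded from $\solspace$ into $Y$ (applied pointwise in time, using $H^1(0,T;H^{3/2}(\Omega))\hookrightarrow C([0,T];H^1(\partial\Omega))$ or point evaluations via $H^{3/2}\hookrightarrow C(\overline\Omega)$ for $d\le3$), the chain rule then gives the claim for $F$. Uniqueness of the derivative is automatic once it exists.

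\textbf{Step 1: $\du$ is well defined and linear.} I would apply Theorem~\ref{thm:two:sources:westervelt:linear:solution:existence:uniqueness:mixed:boundary:space:dependent} to \eqref{eq:westervelt:frechet:derivative} with
\[
\alpha=b-2\eta u,\quad c^2=s,\quad \mathfrak{b}=1,\quad \mu=-4\eta u_t,\quad \delta=-2\eta u_{tt},\quad g=0 .
\]
The hypotheses on $\alpha,\mu,\delta$ follow from $\|u\|_\solspace\le C\Lambda$ (Theorem~\ref{thm:westervelt:nonlinear:solution:existence:uniqueness:source:boundary}) by taking $\Lambda$ small, using $\solspace\hookrightarrow W^{1,\infty}(0,T;L^\infty)$-type embeddings as in the nonlinear well-posedness proof. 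The right-hand side $ds\Delta u-db\,u_{tt}+d\eta(u^2)_{tt}$ lies in $L^2(0,T;L^2(\Omega))$ and is bounded by $C\|(ds,db,d\eta)\|_X\|u\|_\solspace$. This gives a bounded linear map $(ds,db,d\eta)\mapsto\du$ into $\solspace$.

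\textbf{Step 2: Lipschitz estimate for $w=\tu-u$.} For perturbed parameters in a small ball (still in the interior of $\mathcal{D}(F)$), Theorem~\ref{thm:westervelt:nonlinear:solution:existence:uniqueness:source:boundary} gives $\tu$ with the same smallness bound. Applying the linear theorem to \eqref{eq:westervelt:error:pde} then yields
\[
\|w\|_\solspace\le C\|(ds,db,d\eta)\|_X(\|\hh\|+\|g\|).
\]

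\textbf{Step 3: Remainder.} I would apply the linear theorem to \eqref{eq:Westervelt:frechet:derivative:remainder}, bounding each term of its right-hand side in $L^2(0,T;L^2)$ by $C\|(ds,db,d\eta)\|_X\|w\|_\solspace+C\|w\|_\solspace^2$. For $d\eta[(\tu+u)w]_{tt}$, $ds\Delta w$ and $db\,w_{tt}$ this is direct, using $L^\infty$ parameters. This produces $\|\remainder\|_\solspace=O(\|(ds,db,d\eta)\|_X^2)$.

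\textbf{Main obstacle.} The hard part is controlling the products $w_t^2$, $ww_{tt}$ and $(\tu+u)_tw_t$ in $L^2L^2$ with only the regularity in $\solspace$. I would use $H^{3/2}\hookrightarrow L^\infty$ (valid for $d\le3$) together with $H^1(0,T)\hookrightarrow L^\infty(0,T)$, so that $w,w_t\in L^\infty(0,T;L^4)$ or better, while $w_{tt}\in L^2L^2$. For example,
\[
\|w_t^2\|_{L^2L^2}\le\|w_t\|_{L^4L^4}^2\le C\|w\|_{H^1(0,T;H^{3/2})}^2,
\qquad
\|ww_{tt}\|_{L^2L^2}\le\|w\|_{L^\infty L^\infty}\|w_{tt}\|_{L^2L^2}.
\]
A secondary point is that the coefficients $\alpha=b-2\eta u$ must satisfy the smallness conditions uniformly in the perturbation, which again follows from the $\Lambda$-smallness.
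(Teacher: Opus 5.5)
Your decomposition matches the paper's: difference $w=\tilde u-u$, linearization $(du)$, remainder $v=w-(du)$, the linear estimate applied to the remainder equation, and a bound $C\|(ds,db,d\eta)\|_X\|w\|_U+C\|w\|_U^2$ combined with $\|w\|_U\le C\|(ds,db,d\eta)\|_X$. The gap is in the step you yourself call the main obstacle, because the embeddings you use there are false.

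\begin{itemize}
\item For $d=3$, $H^{3/2}(\Omega)$ embeds neither into $L^\infty(\Omega)$ nor into $C(\overline\Omega)$, since $3/2=d/2$ is the borderline case. So $\|w\|_{L^\infty(0,T;L^\infty(\Omega))}$ is not controlled by $\|w\|_{H^1(0,T;H^{3/2}(\Omega))}$.
\item The bound $\|w_t\|_{L^4(0,T;L^4(\Omega))}\le C\|w\|_{H^1(0,T;H^{3/2}(\Omega))}$ fails in every dimension, because that norm only puts $w_t$ in $L^2$ in time.
\item Likewise, $H^1(0,T)\hookrightarrow L^\infty(0,T)$ would give $w_t\in L^\infty(0,T;L^4(\Omega))$ only if $w_{tt}\in L^2(0,T;L^4(\Omega))$. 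But $U$ only gives $w_{tt}\in L^2(0,T;L^2(\Omega))$.
\end{itemize}

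The missing idea, which is how the paper closes these bounds (supplemental proof of Theorem~\ref{thm:westervelt:nonlinear:solution:existence:uniqueness:source:boundary}), is to interpolate between the different components of $U$. Using $3/4>1/2$ and $13/8>3/2$,
\[
L^2(0,T;H^2(\Omega))\cap H^1(0,T;H^{3/2}(\Omega))\hookrightarrow H^{3/4}(0,T;H^{13/8}(\Omega))\hookrightarrow L^\infty(0,T;L^\infty(\Omega)),
\]
and, applied to $w_t$,
\[
L^2(0,T;H^{3/2}(\Omega))\cap H^1(0,T;L^2(\Omega))\hookrightarrow H^{1/4}(0,T;H^{9/8}(\Omega))\hookrightarrow L^4(0,T;L^4(\Omega)).
\]
Together with $z_{tt}\in L^2(0,T;L^2(\Omega))$, this gives the bilinear bound $\|(z_1z_2)_{tt}\|_{L^2(0,T;L^2(\Omega))}\le C\|z_1\|_U\|z_2\|_U$. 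That bound covers $w_t^2$ and $ww_{tt}$. It also covers $d\eta\,[(\tilde u+u)w]_{tt}$, which is not ``direct'' as you claim but needs exactly this estimate. The first embedding also yields $U\hookrightarrow C([0,T];C(\overline\Omega))$. This is what makes $\text{tr}_\Sigma$ bounded into $Y$ for a discrete $\Sigma$, or for $q=\infty$, when $d=3$.

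Step 1 has a related problem. $U$ does not embed into $W^{1,\infty}(0,T;L^\infty(\Omega))$. Applying Theorem~\ref{thm:two:sources:westervelt:linear:solution:existence:uniqueness:mixed:boundary:space:dependent} with $\alpha=b-2\eta u$, $\mu=-4\eta u_t$, $\delta=-2\eta u_{tt}$ would require $u_t\in L^\infty(0,T;L^\infty(\Omega))$ and $u_{tt}\in C([0,T];L^\infty(\Omega))$, which $u\in U$ does not provide. The paper applies the linear theorem in the same way and only asserts that its hypotheses hold, so this weak spot is shared. It is easy to avoid, though.

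Set $L_0z:=bz_{tt}-s\Delta z-\Delta z_t$. The left-hand sides of the equations for $(du)$, $w$ and $v$ are then $L_0(du)-2\eta(u\,(du))_{tt}$, $L_0w-\eta((\tilde u+u)w)_{tt}$ and $L_0v-2\eta(uv)_{tt}$, respectively. Apply the linear theorem only to $L_0$, i.e.\ with $\alpha=b$ and $\mu=\delta=0$, as the paper does in its proof of Theorem~\ref{thm:westervelt:nonlinear:solution:existence:uniqueness:source:boundary}. Treat the $\eta$-terms as perturbations. By the bilinear bound, their $L^2(0,T;L^2(\Omega))$-norms are at most
\[
C\|\eta\|_{L^\infty(\Omega)}(\|u\|_U+\|\tilde u\|_U)\|z\|_U\le C'\Lambda\|z\|_U .
\]
For $\Lambda$ small, a Neumann series then gives unique solvability and $U$-bounds uniform in the perturbation. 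With these two repairs your argument goes through and coincides with the paper's.
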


\section{Reconstructing \texorpdfstring{$s$, $b$, and $\eta$}{s, b, and eta} from boundary measurements
}
\label{sec:recon:inverse:problem}
In this section we are mainly interested in reconstructing $s, b$ and $\eta$ from possibly incomplete and noisy boundary measurement $h$ cf.~\eqref{eq:obs}, where the acoustic wave propagation through the domain $\Omega$ is excited by a boundary source. We further set $\beta = 0$ on the boundary. In what follows we take the preparatory steps for a frozen Newton-type regularization scheme. We start to do so by investigating linearised uniqueness of an once-at-all formulation that provides additional freedom by not necessarily having to constrain the reference state to be a solution to the  PDE~\eqref{eq:westervelt:quadratic-nonlinearity:periodic}. 

\input{linearised_uniqueness}
\input{IRGNM_L2L2}

\section{Numerical experiments}
\label{sec:experiments}
In this section, we present numerical experiments in two spatial dimensions. The solver used to conduct these experiments can be found in~\cite{RainerSolver26}. It includes a 2D conforming element FEM solver for the periodic nonlinear Westervelt equation (re-parameterised, cf.~\eqref{eq:westervelt}) based on a multiharmonic expansion. (see~\cite{RK2025} and the supplemental material~\ref{appendix:multiharmonic:ansatz}), as well as its Fréchet derivative and the corresponding adjoint in the Hilbert space setting used here. Furthermore, it implements the frozen Newton method outlined in Section~\ref{sec:reconstruction:algo}. However, the lifting of the parameters is practically not required, as we do not need to compute the effective increments once we ensure that we start sufficiently near to the actual solution. The iterates defined in~\eqref{identification:iterates:regularized:formulation} are computed using a conjugate gradient method. 

In what follows we present three numerical studies in which we attempt to reconstruct phantoms enclosed in a circular domain with radius of $0.2$ $m$, i.e., $\Omega = B_{0.2}(0)$. In all cases we use as boundary source, with different frequencies, the Robin trace of the example function discussed in Remark~\ref{rem:example} with Robin parameter $\gamma = 1$ on the full boundary $\partial \Omega$. The boundary sources are appropriately scaled to ensure that the assumptions of Theorem~\ref{thm:linearised:uniqueness} are satisfied. For generating the regularisation parameters, we set $\alpha_0 = 1$ and $q = 0.6$. The Dirichlet boundary measurements are taken from all three reference states over the duration of one period. For the numerical experiments we set $\vxz$ to the values of the domain, i.e., $(s^0,b^0,0)$.

\underline{\textit{Case 1 - a single phantom}}. First, we study the reconstruction of a single phantom inside $\Omega$ from full and partial boundary measurements under possible random (white) noise. We choose the following values for the numerical experiments. The inclusion is circular with a radius of $0.03$, and its center is at $(0,0.1)^T$. It has a $B/A$ value of $7$, a speed of sound of $c_\text{domain} = 10.11$ $m/s$ and a diffusivity of $\mathfrak{b} = 0.051$ $m^2 s^{-1}$. For the surrounding domain, we consider a speed of sound of $c_\text{domain} = 10$ $m/s$ and a diffusivity of $\mathfrak{b}_\text{domain} = 0.05$ $m^2 s^{-1}$. We set the mass density $\rho = 1000$ $kg/m^3$. From these values, we compute the transformed parameters according to~\eqref{etabs} (cf. figure~\ref{fig:case:1}, first row). 

The reconstruction results are shown in Figure~\ref{fig:case:1}. The second row shows the reconstruction under full boundary measurements without noise after 20 Newton iterations. The location of the inclusions is correctly reconstructed, and we can clearly see that the parameters of the phantom have the correct qualitative behavior. The third row shows the reconstruction under $0.1\%$ white noise on the measurements. After 12 Newton iterations, artefacts near the boundary become visible due to the noisy data. Despite these perturbations, the location of the phantom and the correct qualitative behavior of the amplitudes are still recovered.

In Figure~\ref{fig:case:1:partialboundary} we show the case where measurements are taken on part of the boundary, indicated by a black arc. The first row shows the reconstruction of a single parameter set ($s,b,\eta$) after the first Newton iteration. The second row shows results after 30 Newton iterations; one observes that the amplitude of the phantom in the parameters have increased/decreased appropriately and that the false positives from the first iteration are barely visible. To contrast the linear case in which we don't expect uniqueness due to the lack of higher harmonics, in the third row shows the reconstruction in the linear case after 20 Newton iterations, i.e., reconstructing $s$ and $b$ from the case $\eta = 0$ in~\eqref{eq:westervelt:quadratic-nonlinearity:periodic}. 
Here, a spurious inclusion persists and the true phantom cannot be identified. 
In Figure~\ref{fig:J:partial:boundary:meas:nonlin:vs:lin} we show $J_n(\vx,\vx_n)$ (cf.~\eqref{eq:cost:function}) for the nonlinear and linear case with measurements taken on the partial boundary.
This clearly illustrates the benefit of nonlinearity also for the reconstruction of linear parameters.

\underline{\textit{Case 2 - two phantoms}}. Second, we study the case of two phantoms, each of them having different parameter values (cf. Figure~\ref{fig:case:2}). The actual parameter values are as follows: $B/A = 7$ for the left phantom and $B/A = 6$ for the right phantom, diffusivity in the domain  $\mathfrak{b} = 0.05$ $m^2 s^{-1}$, diffusivity in the phantoms $\mathfrak{b}=0.051$ $m^2 s^{-1}$, speed of sound in the domain $c = 10$ $m/s$, speed of sound in the phantoms $c = 10.15$ $m/s$, mass density $\rho = 1000$ $kg/m^3$. Hence, the phantoms differ only in the nonlinearity parameter. The reconstructions from twenty Newton iterations are show in Figure~\ref{fig:case:2}. The right phantom is reconstructed with higher intensity despite having a lower nonlinearity parameter. However, since it is closer to the boundary, it has a stronger influence on the measurement data. This phenomenon has also been identified and studied in~\cite{Kaltenbacher_2024}.
\begin{figure}
\centering
    \begin{subfigure}{0.3\textwidth}
        \includegraphics[width=1.0\linewidth]{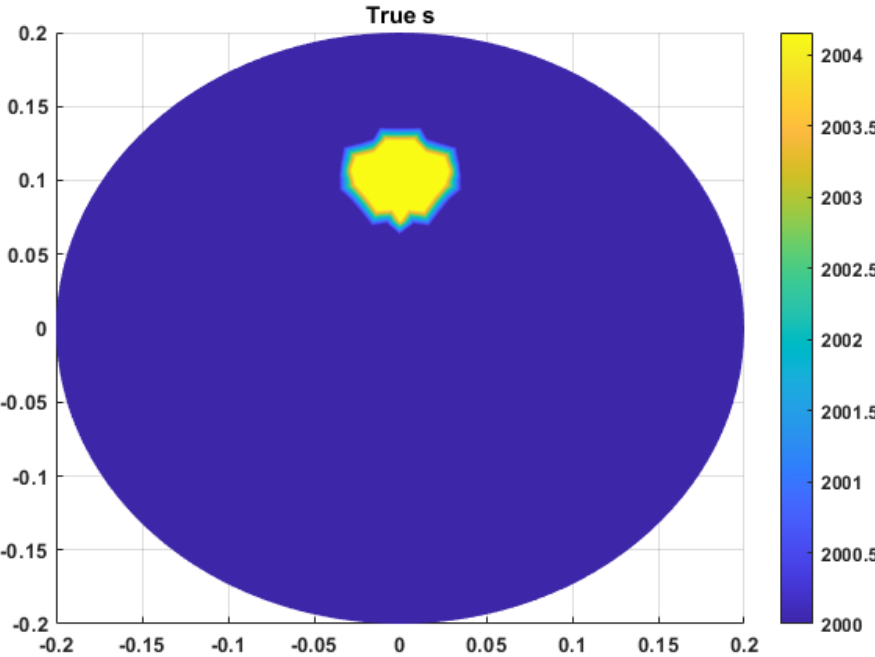} 
        \end{subfigure}
    \begin{subfigure}{0.3\textwidth}
        \includegraphics[width=1.0\linewidth]{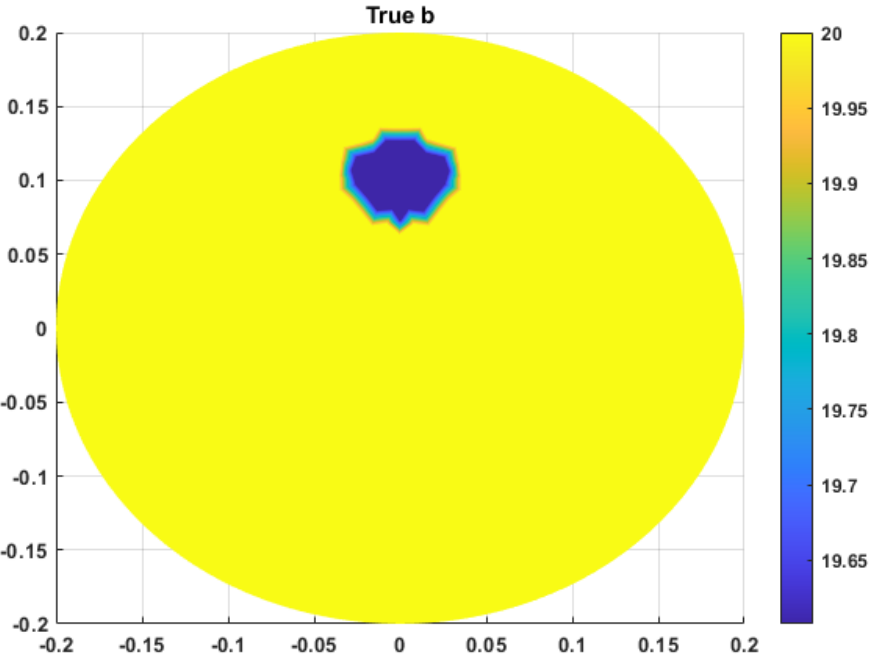} 
        \end{subfigure}
    \begin{subfigure}{0.3\textwidth}
        \includegraphics[width=1.0\linewidth]{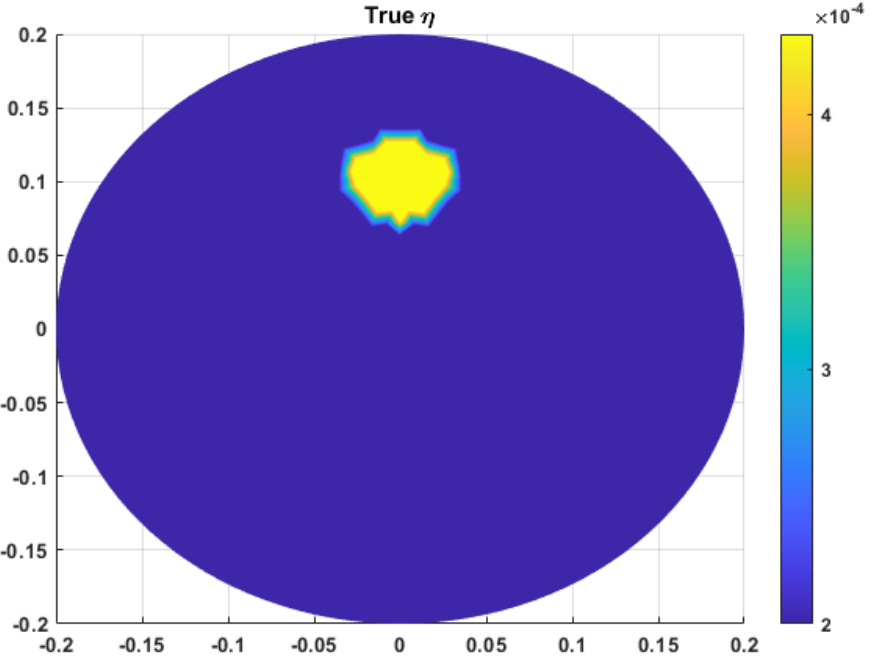} 
        \end{subfigure}
\bigskip 
        \begin{subfigure}{0.3\textwidth}
        \includegraphics[width=1.0\linewidth]{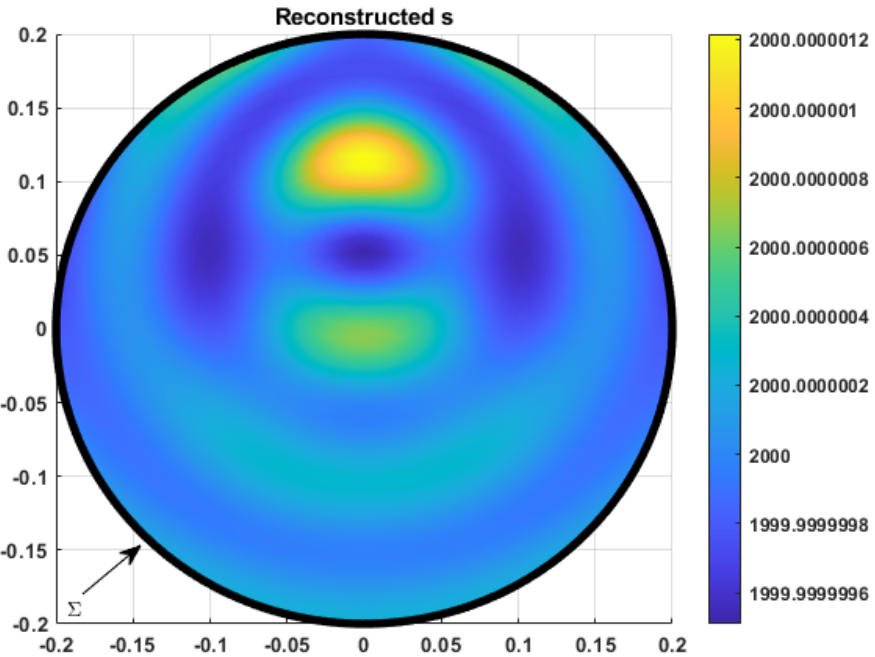} 
        \end{subfigure}
    \begin{subfigure}{0.3\textwidth}
        \includegraphics[width=1.0\linewidth]{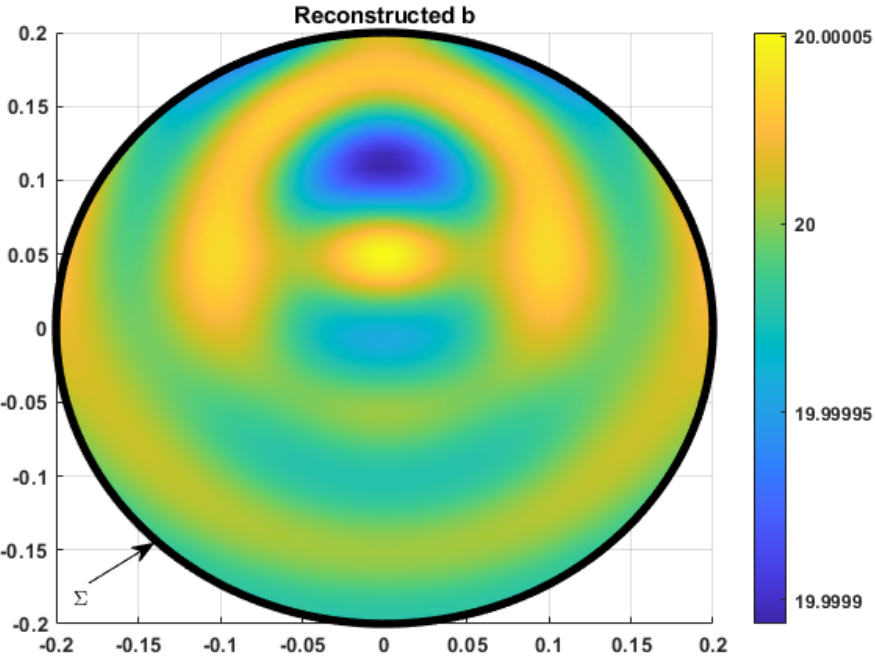} 
        \end{subfigure}
    \begin{subfigure}{0.3\textwidth}
        \includegraphics[width=1.0\linewidth]{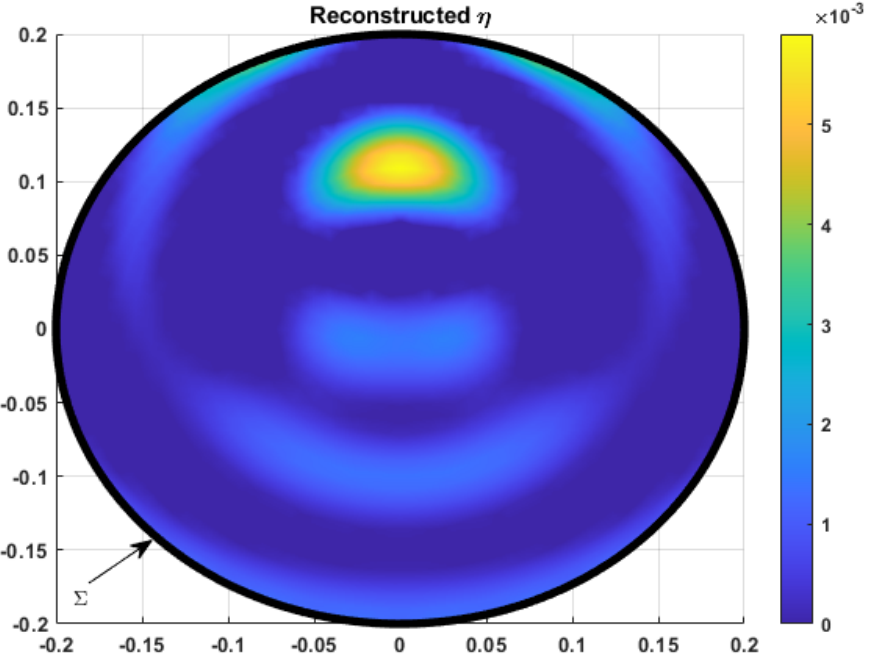} 
     \end{subfigure}
\bigskip 
     \begin{subfigure}{0.3\textwidth}
        \includegraphics[width=1.0\linewidth]{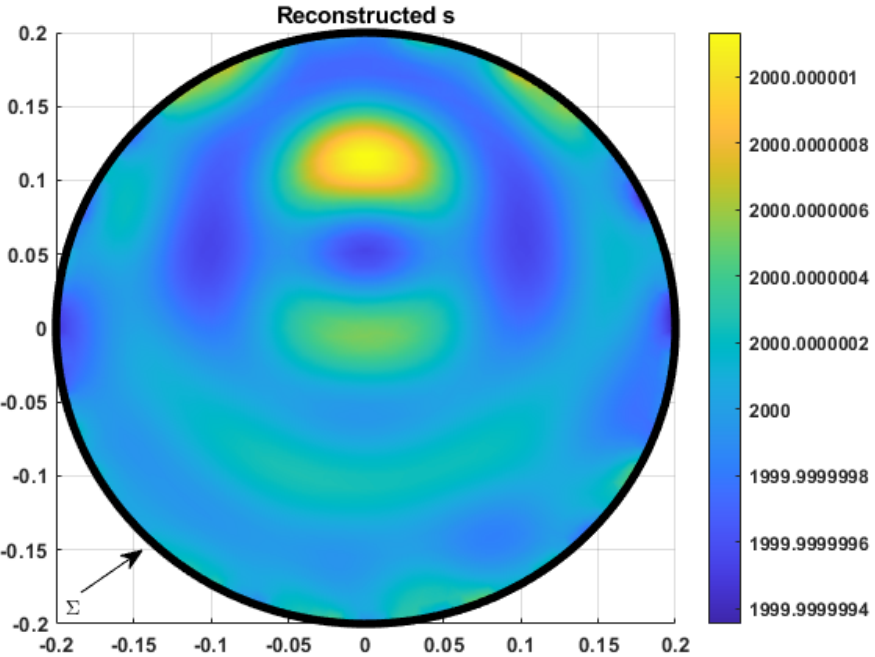} 
        \end{subfigure}
 \begin{subfigure}{0.3\textwidth}
        \includegraphics[width=1.0\linewidth]{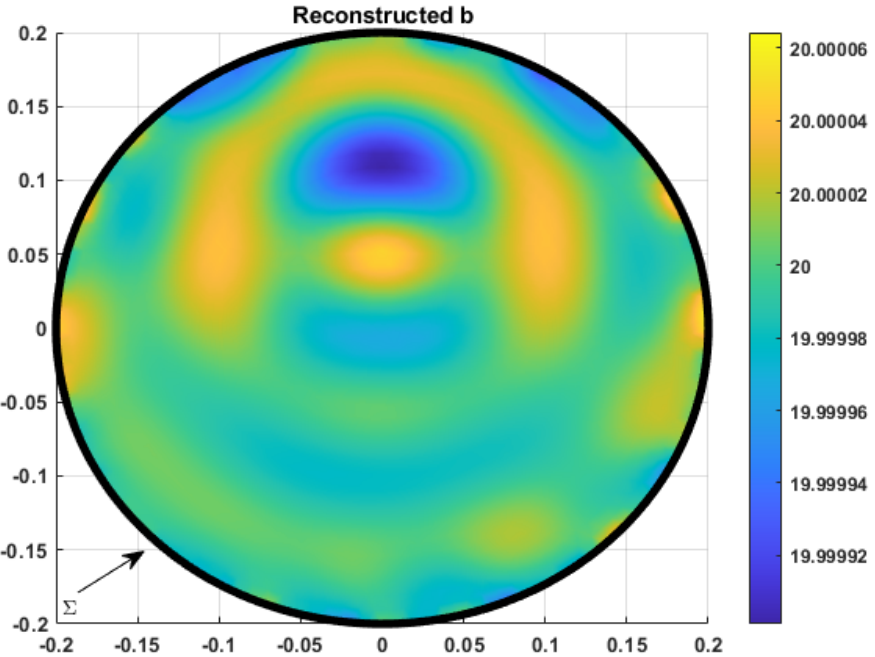} 
        \end{subfigure}
    \begin{subfigure}{0.3\textwidth}
        \includegraphics[width=1.0\linewidth]{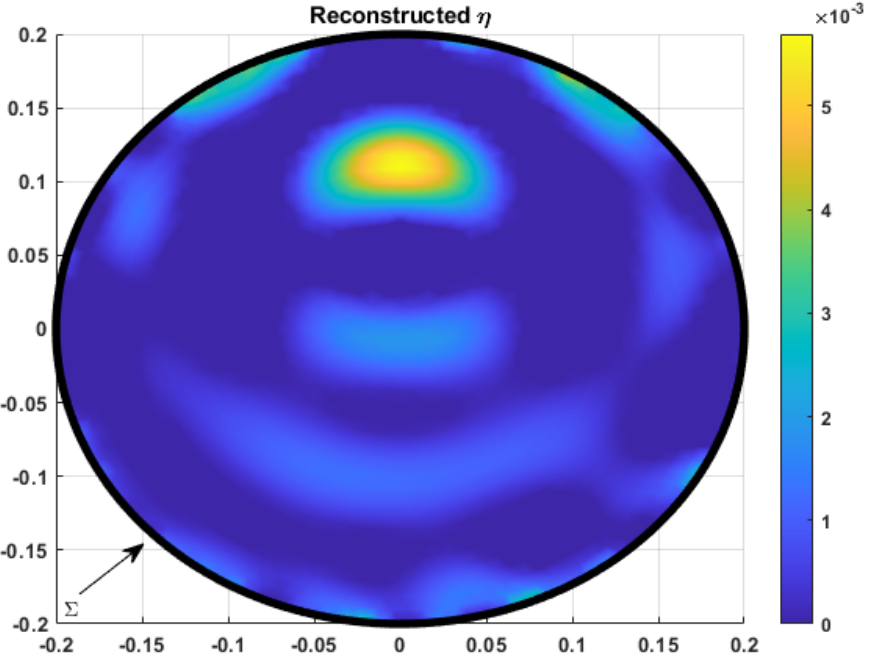} 
        \end{subfigure}
     \caption{Top row: true phantom parameters; mid row: reconstructions after 20 Newton iterations with full boundary information; bottom row: reconstructions after 12 Newton iterations with $0.1\%$ noise having full boundary information. }
     \label{fig:case:1}
\end{figure}
\begin{figure}
\centering
    \begin{subfigure}{0.3\textwidth}
        \includegraphics[width=1.0\linewidth]{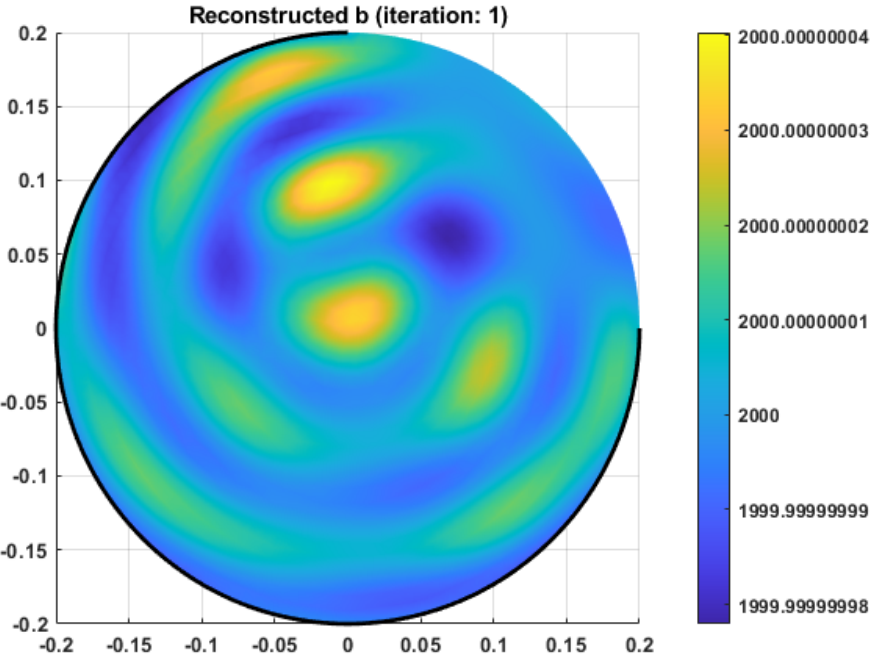} 
        \end{subfigure}
    \begin{subfigure}{0.3\textwidth}
        \includegraphics[width=1.0\linewidth]{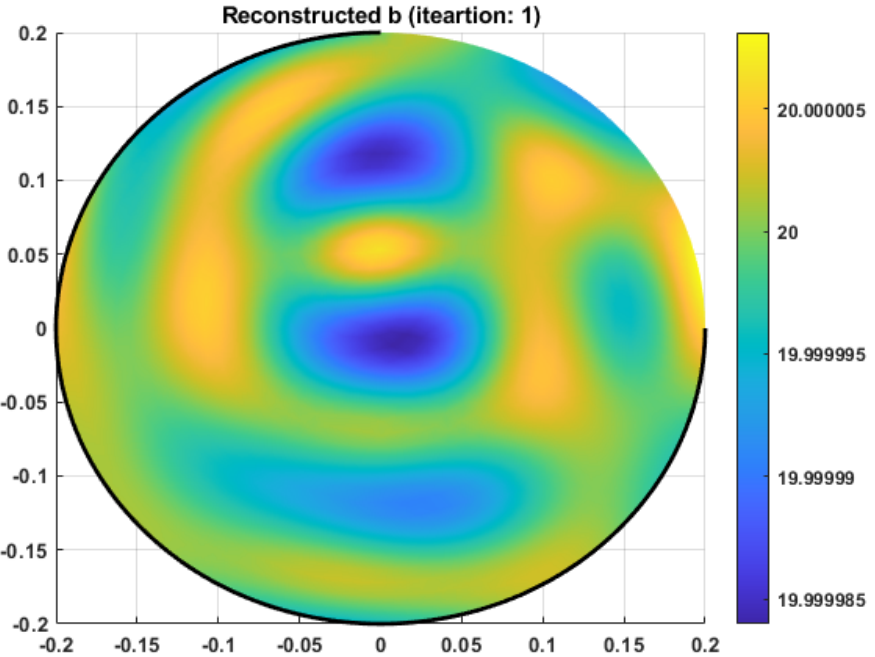} 
        \end{subfigure}
    \begin{subfigure}{0.3\textwidth}
        \includegraphics[width=1.0\linewidth]{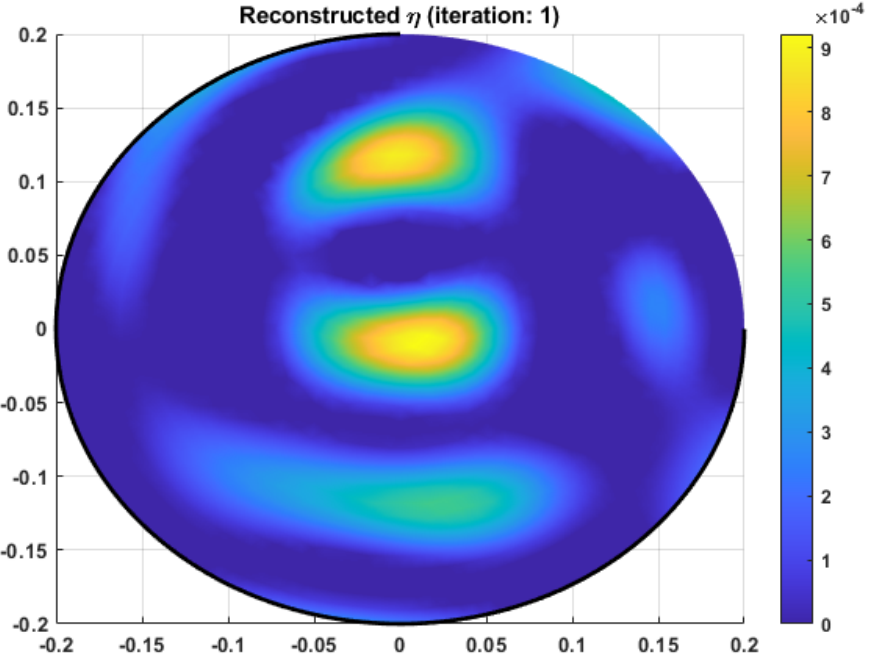} 
        \end{subfigure}
\centering
    \begin{subfigure}{0.3\textwidth}
        \includegraphics[width=1.0\linewidth]{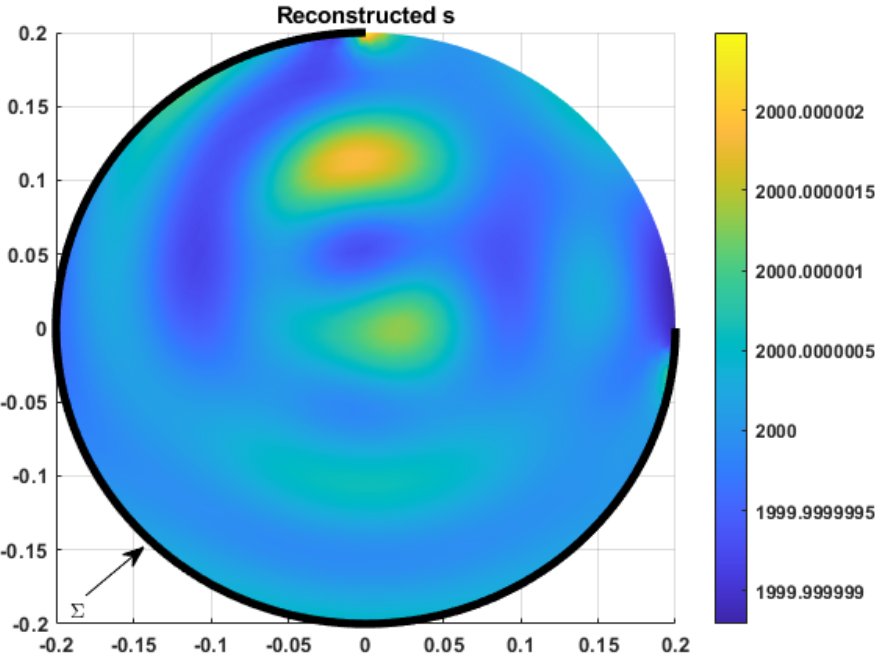} 
        \end{subfigure}
    \begin{subfigure}{0.3\textwidth}
        \includegraphics[width=1.0\linewidth]{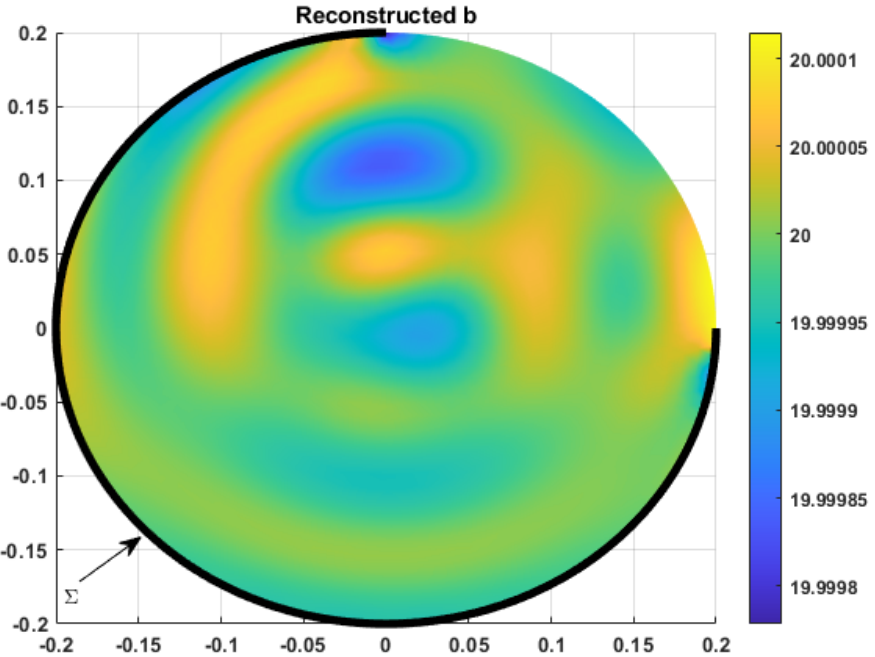} 
        \end{subfigure}
    \begin{subfigure}{0.3\textwidth}
        \includegraphics[width=1.0\linewidth]{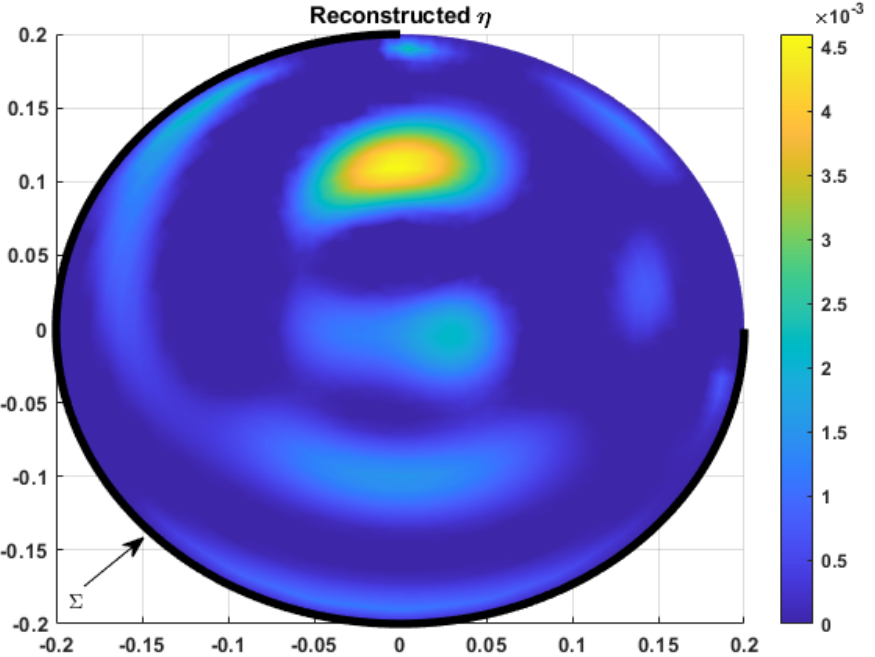} 
    \end{subfigure}
\centering
    \begin{subfigure}{0.3\textwidth}
        \includegraphics[width=1.0\linewidth]{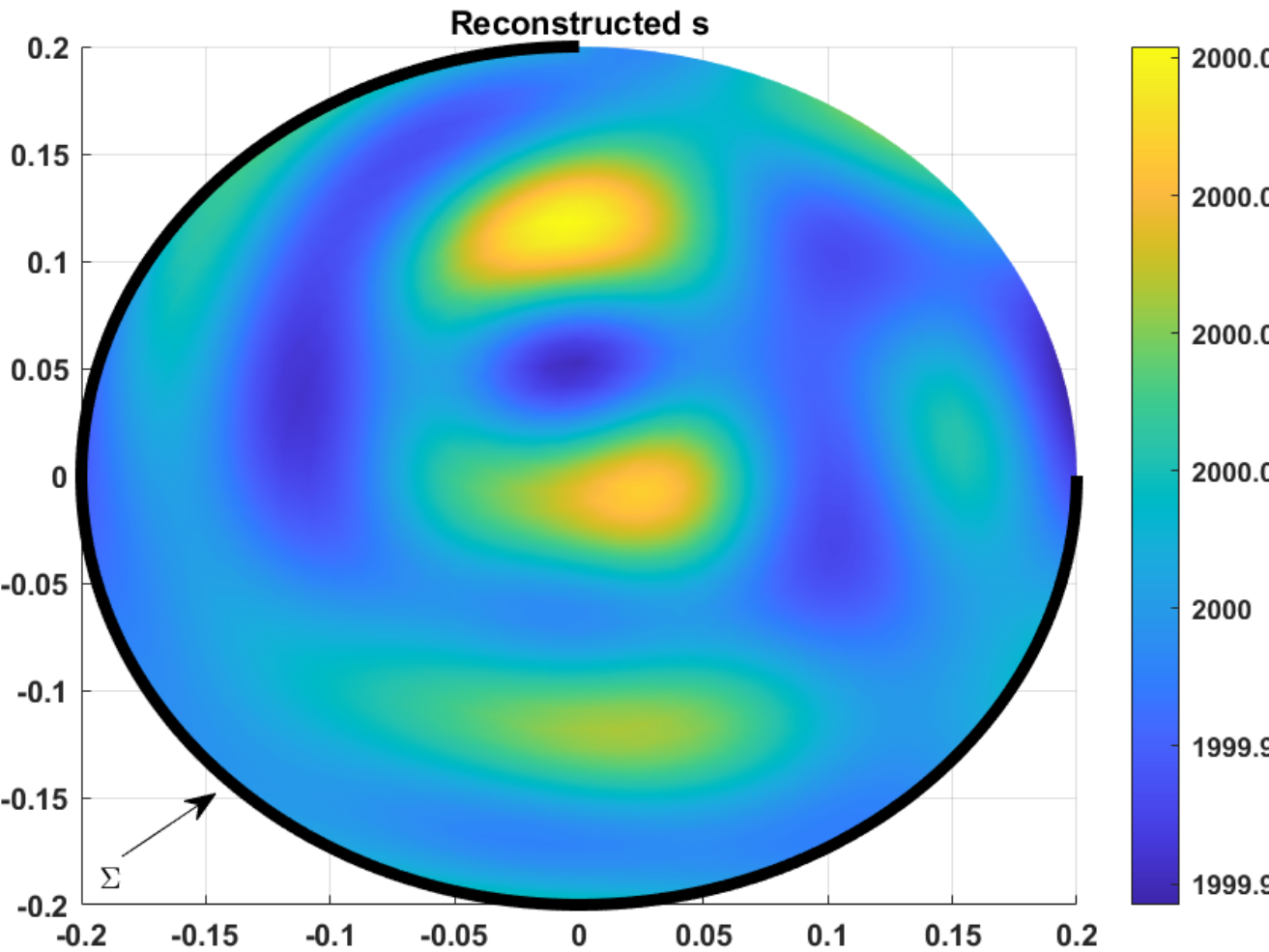} 
        \end{subfigure}
    \begin{subfigure}{0.3\textwidth}
        \includegraphics[width=1.0\linewidth]{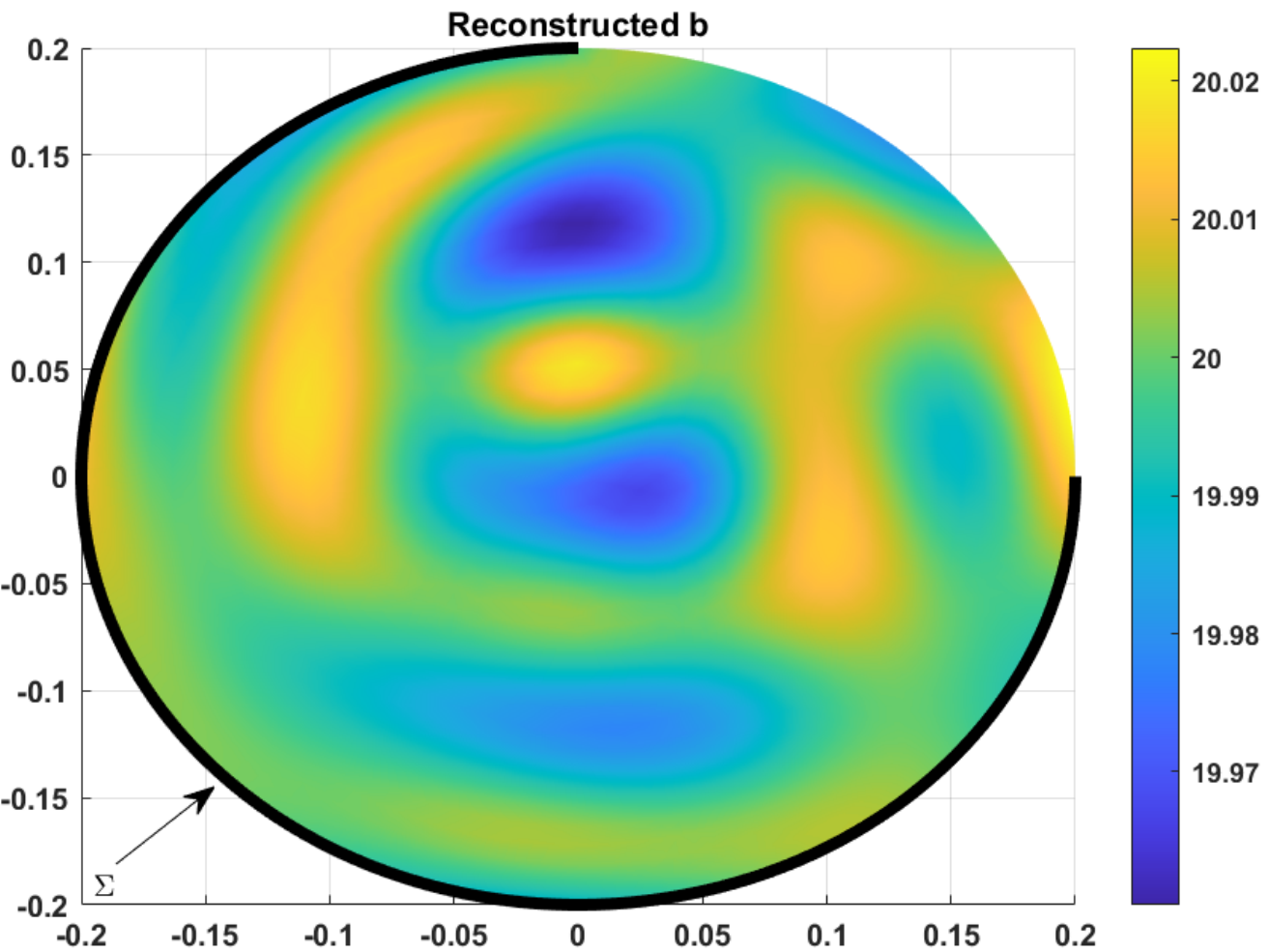} 
        \end{subfigure}
    \begin{subfigure}{0.3\textwidth}
        \includegraphics[width=1.0\linewidth]{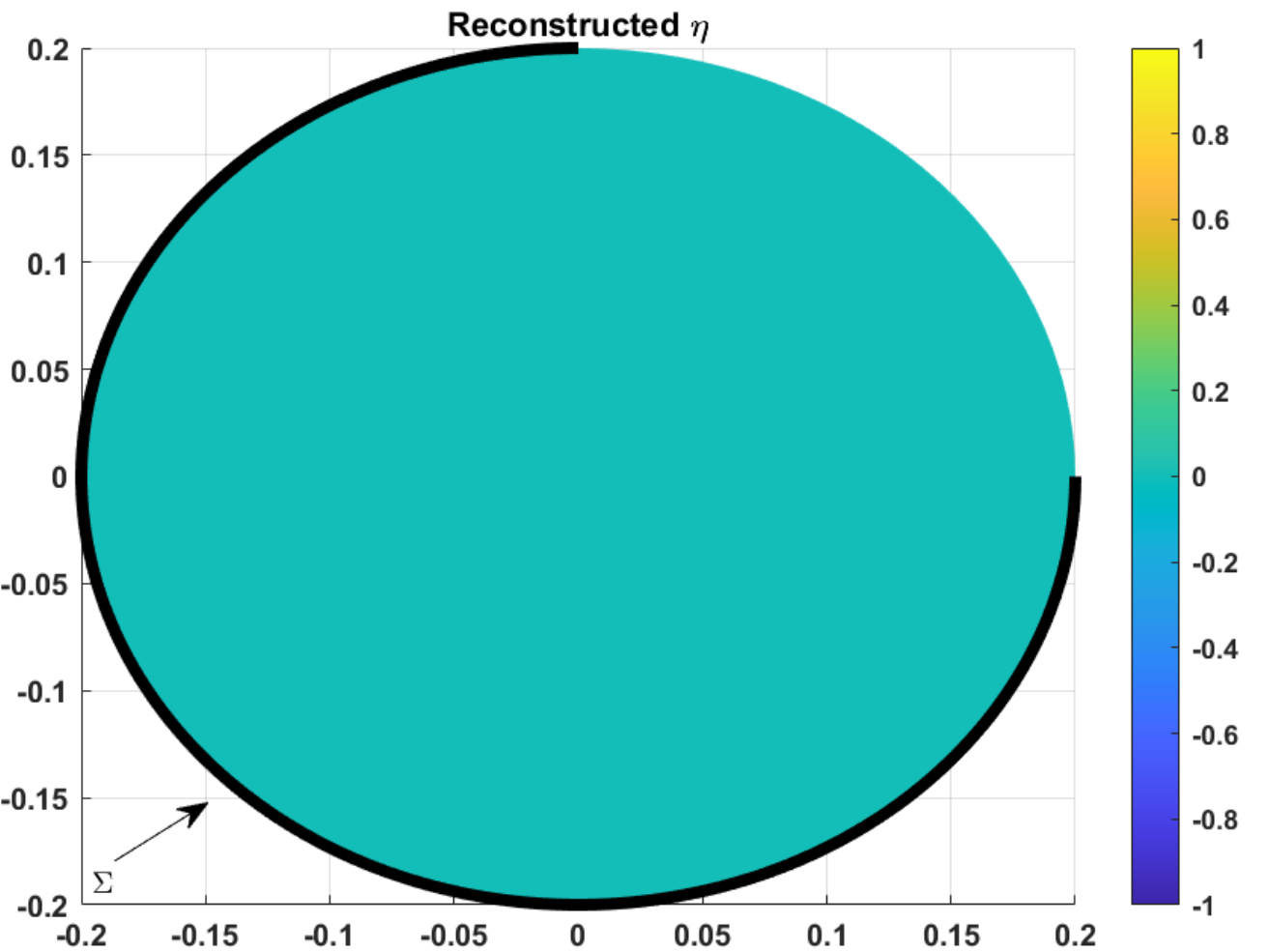} 
        \end{subfigure}
        \caption{Top row: reconstructions after the first Newton iteration under partial boundary measurements; mid row: reconstructions after 20 Newton iterations; bottom row: linear case after 20 Newton iterations.}
\label{fig:case:1:partialboundary}
\end{figure}
\begin{figure}
    \centering
    \includegraphics[width=0.65\textwidth]{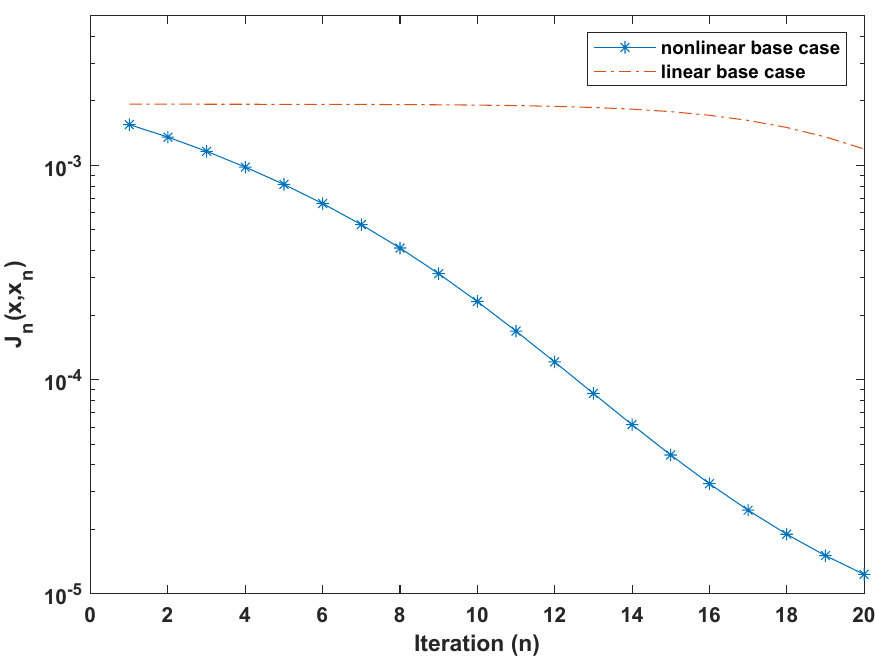}
    \caption{$J_n(\vx,\vx_n)$ on a logarithmic scale for case 1 with measurements taken on the partial boundary, nonlinear vs. linear.}
    \label{fig:J:partial:boundary:meas:nonlin:vs:lin}
\end{figure}
\begin{figure}
\centering
    \begin{subfigure}{0.3\textwidth}
        \includegraphics[width=1.0\linewidth]{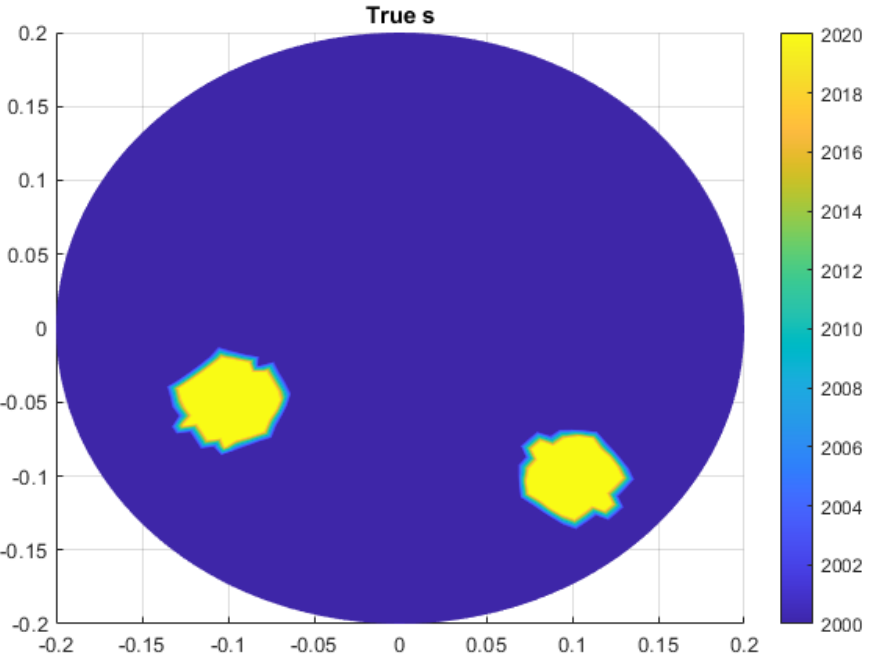} 
        \end{subfigure}
    \begin{subfigure}{0.3\textwidth}
        \includegraphics[width=1.0\linewidth]{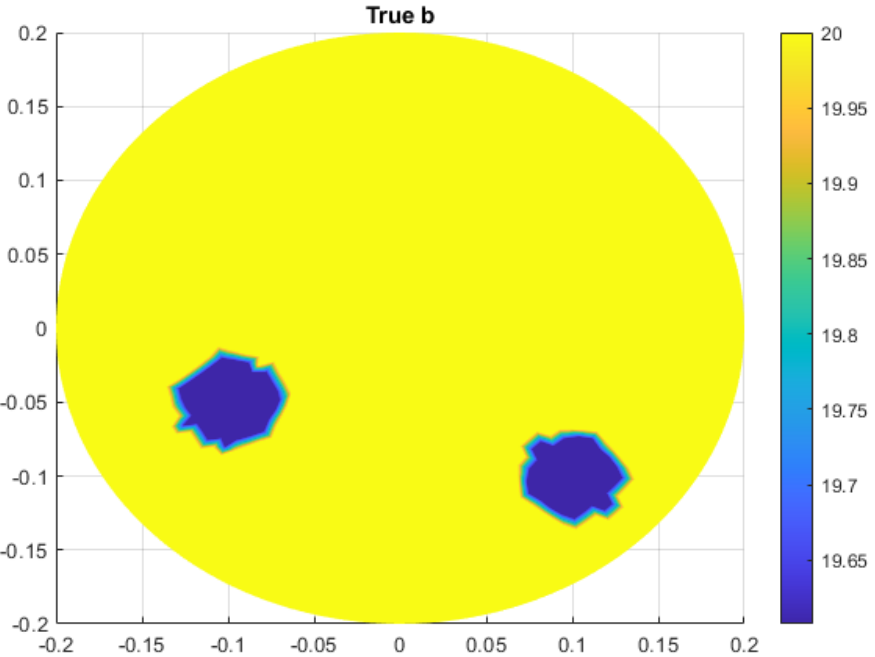} 
        \end{subfigure}
    \begin{subfigure}{0.3\textwidth}
        \includegraphics[width=1.0\linewidth]{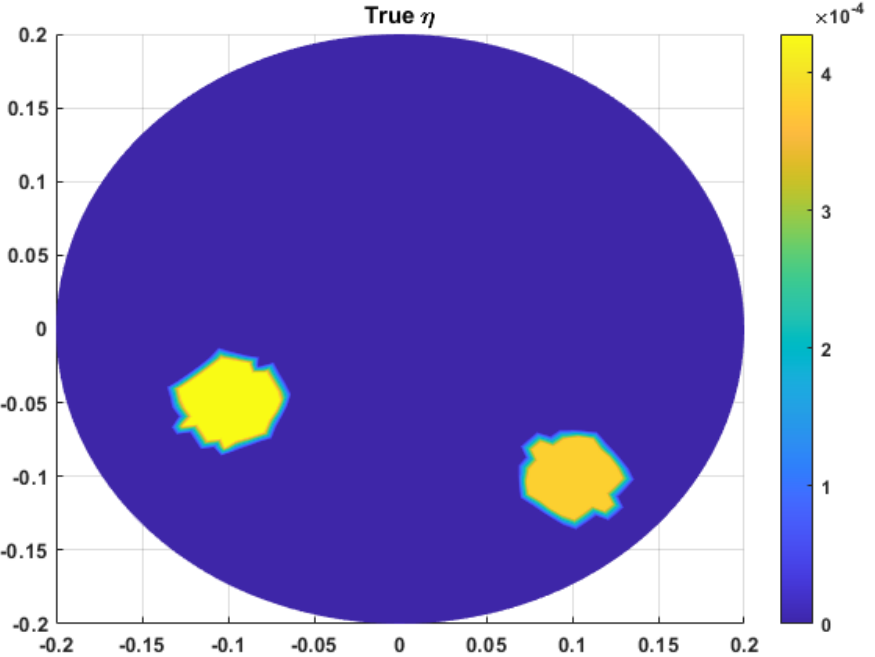} 
        \end{subfigure}
        \bigskip
    \begin{subfigure}{0.3\textwidth}
        \includegraphics[width=1.0\linewidth]{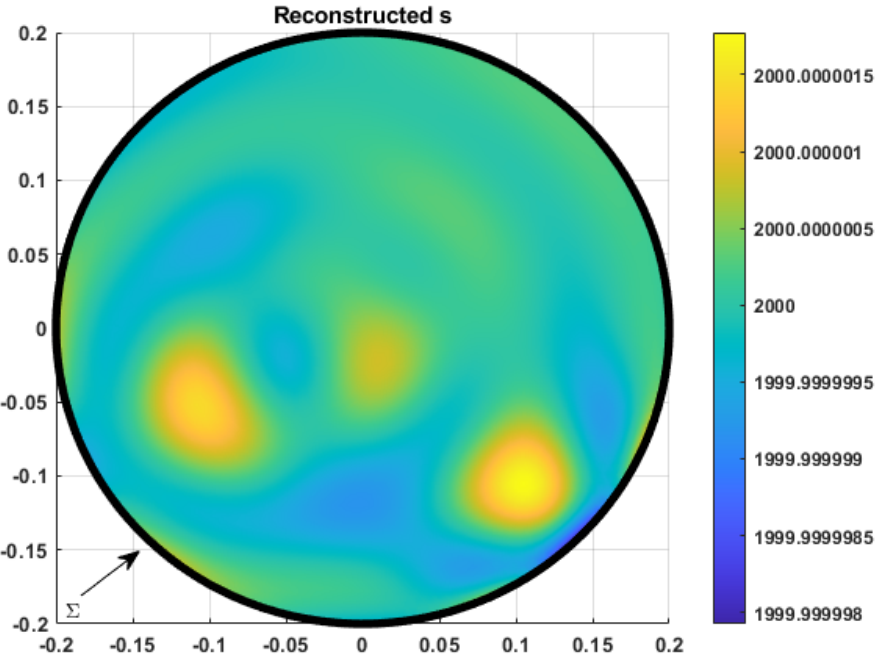} 
    \end{subfigure}
    \begin{subfigure}{0.3\textwidth}
        \includegraphics[width=1.0\linewidth]{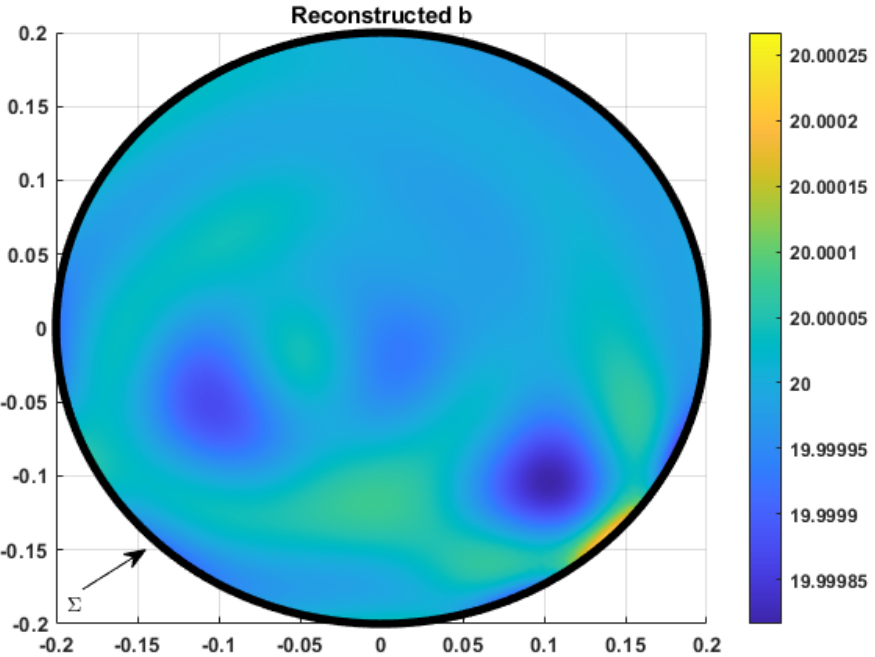} 
        \end{subfigure}
    \begin{subfigure}{0.3\textwidth}
        \includegraphics[width=1.0\linewidth]{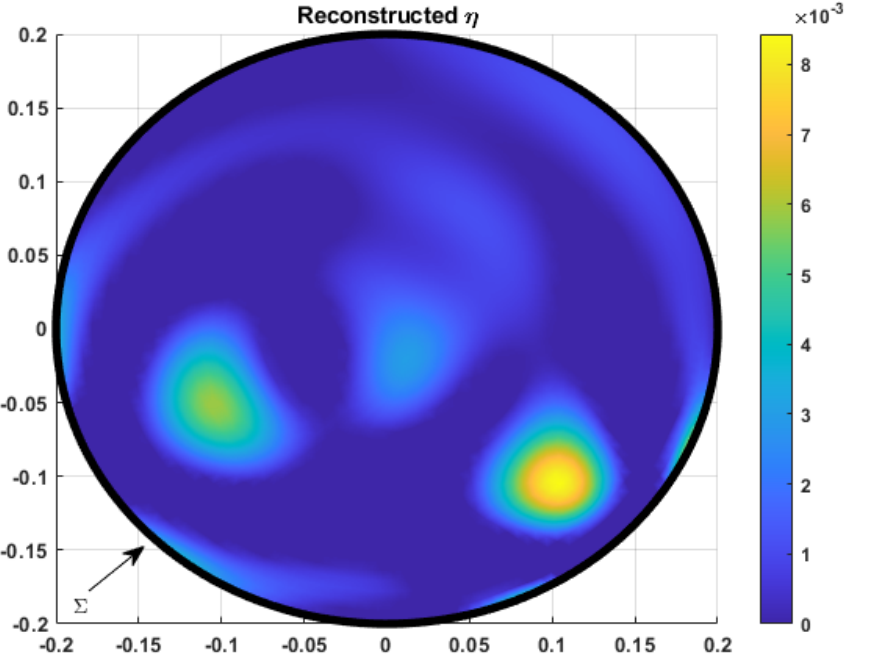} 
       \end{subfigure}
        \caption{First row: actual locations of two phantoms; second row: reconstructions after 20 Newton iterations.}
        \label{fig:case:2}
\end{figure}

\underline{\textit{Case 3 - three phantoms with spatially separated supports}}. Third and finally, we study the most challenging case in which  three inclusions are placed inside the domain, each differing in one of the three parameters. The phantoms are positioned such that they have the same distance to the boundary (cf. figure~\ref{fig:case:3}). The actual parameter values are as follows: $B/A = 7$, phantom diffusivity $\mathfrak{b}=0.052$ $m^2 s^{-1}$, and phantom speed of sound $c = 10.005$ $m/s$. For the domain, we set $\mathfrak{b} = 0.05$ $m^2 s^{-1}$, speed of sound $c = 10$ $m/s$, and mass density $\rho = 1000$ $kg/m^3$. 

Figure~\ref{fig:case:3} in the first row shows re-parameterised parameter values as well as the location and extent of the phantoms. The second row shows the reconstruction of the parameters after 14 Newton iterations. The three phantoms can be clearly distinguished. 
\begin{figure}[ht]
\centering
    \begin{subfigure}{0.3\textwidth}
        \includegraphics[width=1.0\linewidth]{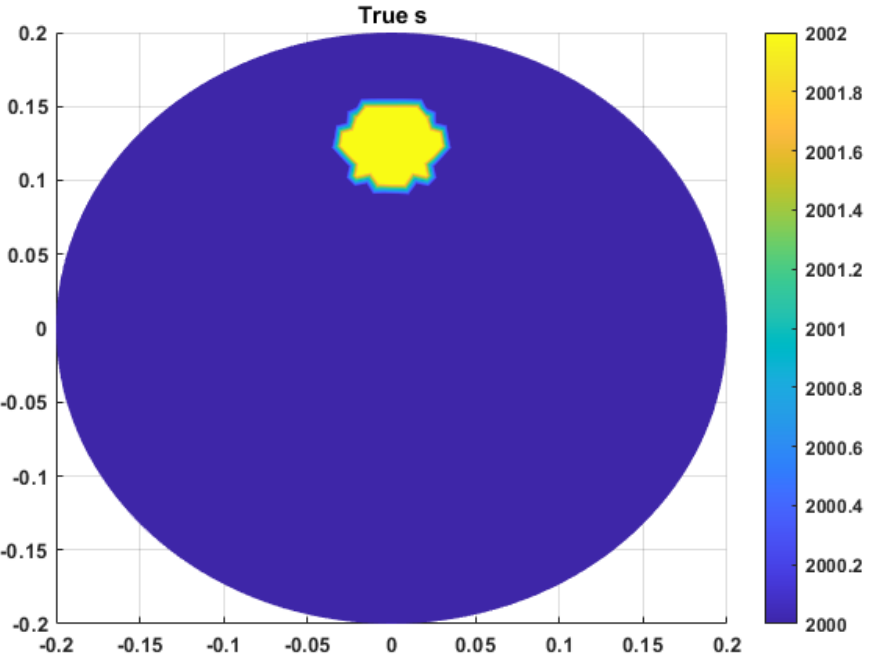} 
        \end{subfigure}
    \begin{subfigure}{0.3\textwidth}
        \includegraphics[width=1.0\linewidth]{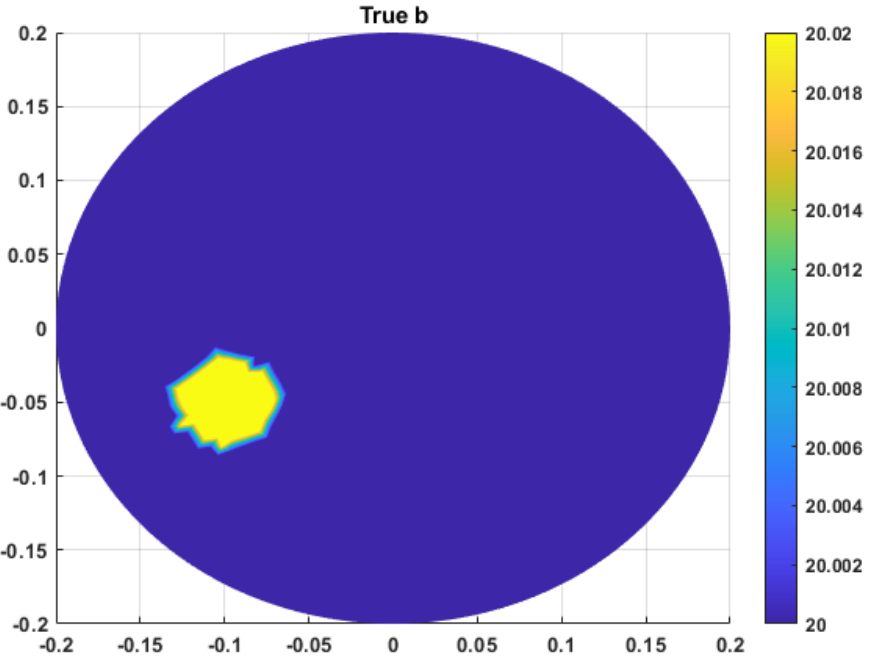} 
        \end{subfigure}
    \begin{subfigure}{0.3\textwidth}
        \includegraphics[width=1.0\linewidth]{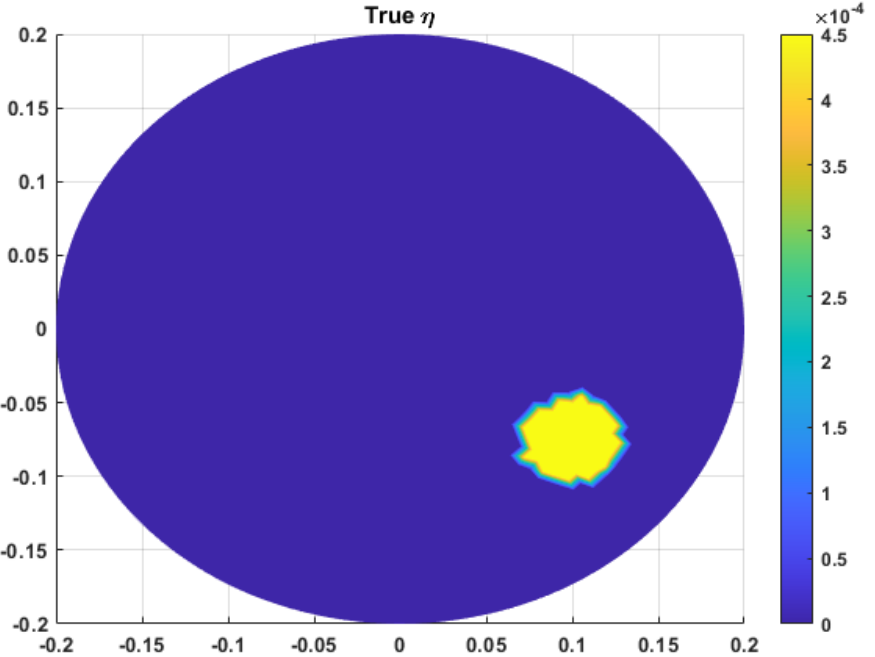} 
        \end{subfigure}
\bigskip 
    \begin{subfigure}{0.3\textwidth}
        \includegraphics[width=1.0\linewidth]{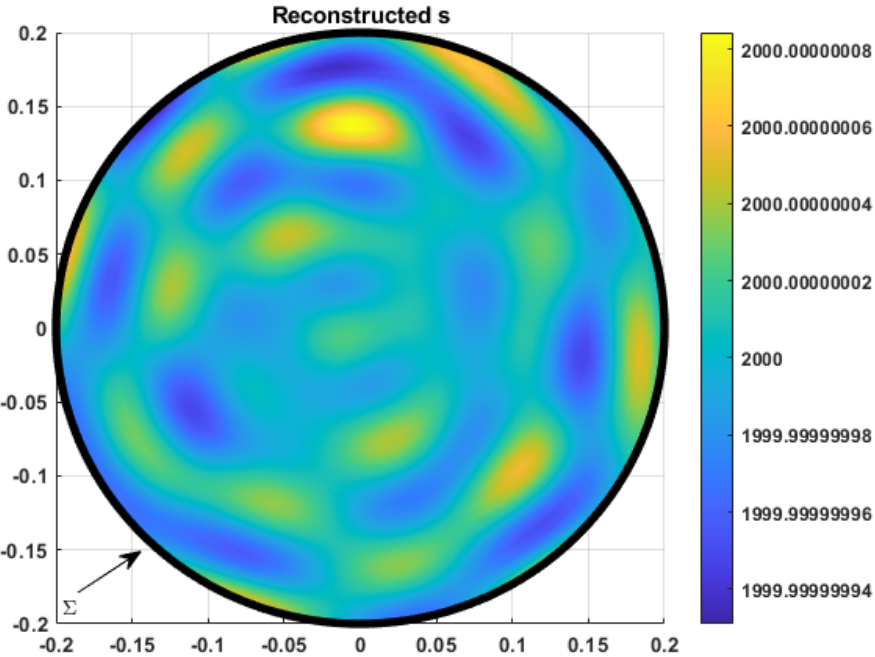} 
        \end{subfigure}
    \begin{subfigure}{0.3\textwidth}
        \includegraphics[width=1.0\linewidth]{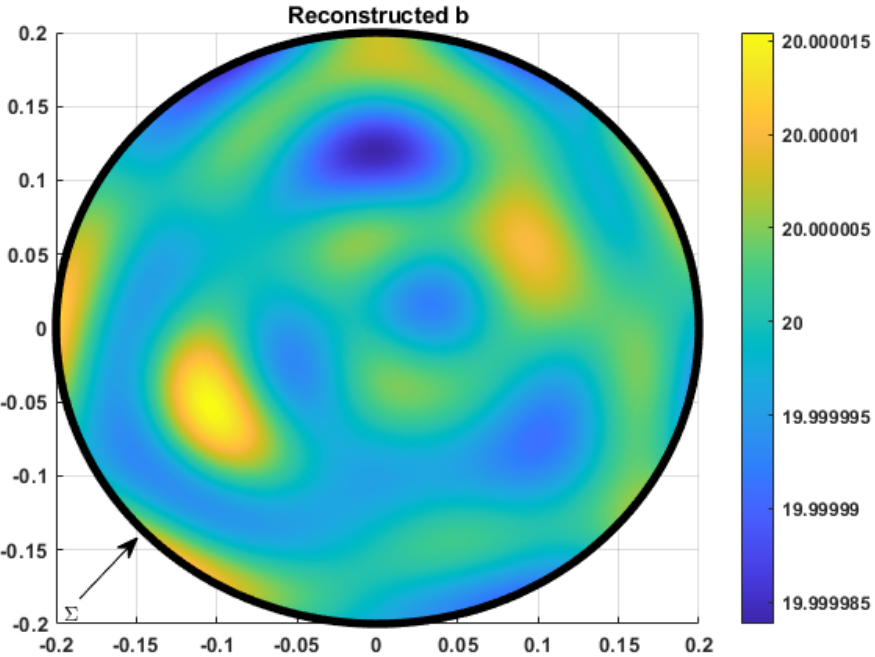} 
        \end{subfigure}
    \begin{subfigure}{0.3\textwidth}
        \includegraphics[width=1.0\linewidth]{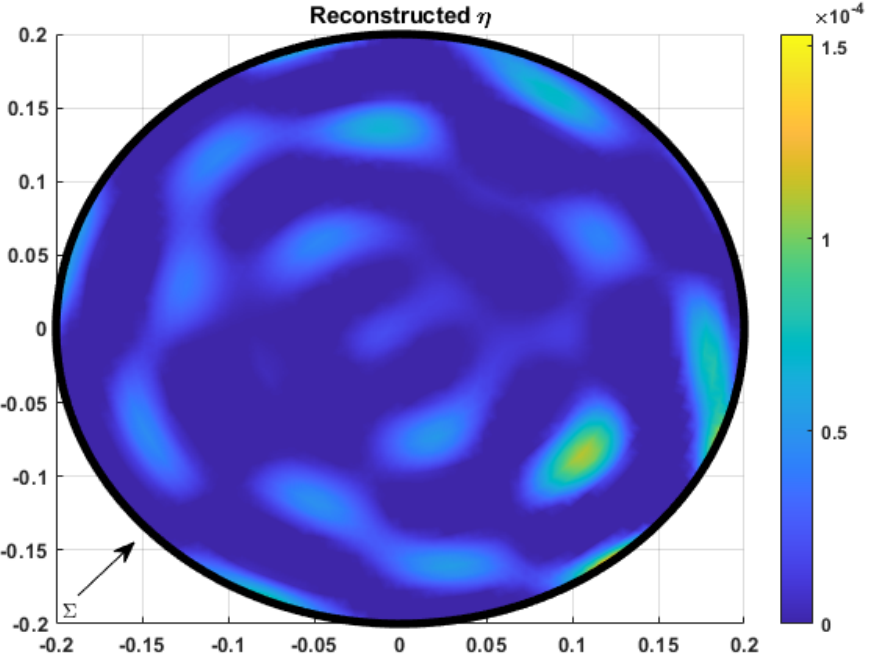} 
        \end{subfigure}
 \caption{First row: actual locations and intensities of three phantoms with separated supports in $(s,b,\eta)$; second row: reconstructions after 14 Newton iterations.}
 \label{fig:case:3}
\end{figure}
\section{Discussion}
\label{sec:conclusions}
For the formulation of the iterative reconstruction via a frozen Newton-type method, based on establishing range invariance of the all-at-once operator, it was necessary to work with a weaker parameter space than the originally assumed domain of the forward operator. The underlying reason is that the available analytical tools require the function $r$ to be boundedly invertible at $\vxz$. By our specific choice of $r$, we have $r^\prime(\vxz) = \text{id}$; see~\cite{Kaltenbacher23Convergence} (Theorem 4 in Appendix C).
In the present setting, this would entail the embedding $L^2(0,T;L^2(\Omega)) \hookrightarrow L^\infty(0,T;L^\infty(\Omega))$ which is not valid. Consequently, in order to close the required estimates, two alternatives arise: either increasing the regularity of the parameter space or strengthening the regularity of the state space. We here opted for the latter approach and imposed higher regularity on the state space. Future work will be devoted the former, by developing a Banach space framework for the iterative reconstruction method. In particular, in order to enhance reconstruction quality in tests with piecewise constant phantoms as those considered here, we intend to use total variation regularization. 
\section*{Acknowledgments}
This research was funded in part by the Austrian Science Fund (FWF) [10.55776/P36318].



%% file: linearised_uniqueness.tex
\def\pole{\mathfrak{p}}
\subsection{Linearized uniqueness for an all-at-once formulation in space-time}
We define the all-at-once forward operator  
\[
\mathbb{F}:= (\mathbb{F}_j^{mod},\mathbb{F}_j^{obs})_{j\in\{1,2,3\}}:(s,b,\eta,(u_j)_{j\in\{1,2,3\}})
\mapsto (\mathbb{F}_j^{mod}(s,b,\eta,u_j),\mathbb{F}_j^{obs}(u_j))_{j\in\{1,2,3\}}
\]
by 
\begin{equation}\label{eq:aao}
\begin{aligned}
&\mathbb{F}_j^{mod}(s,b,\eta,u_j)=\begin{cases}
    (b_j u_j -\eta u_j^2)_{tt} - s \Delta u_j - \Delta u_{j\,t} &\text{ in }(0,T_j) \times \Omega, \text{ time periodic}, \\ 
    \gamma u_j + \nabla u_j \cdot \vecn &\text{ on }(0,T_j)\times\partial\Omega,
\end{cases}\\
&\mathbb{F}_j^{obs}(u_j)= \text{tr}_\Sigma u_j,
\end{aligned}
\end{equation}
and linearize it at $(s^0,b^0,0,(u^0_j)_{j\in\{1,2,3\}})$ with positive constants $s^0$, $b^0$ and space-time separable reference states 
\begin{align}
\label{eq:reference:state:space:time}
u^0_j(x,t)=\phi(x)\,\psi_j(t),
\end{align}
where $\phi \in \mathcal{D}(-\Delta)$, $\phi \neq 0, \Delta \phi \neq 0$ almost everywhere in $\Omega$, and $\psi_j \in H^2(0,T_j)$ being $T_j$ periodic functions.
It is important to note that in such an all-at-once formulation, $u^0_j$ does not necessarily need to be a PDE solution corresponding to the coefficients $(s^0,b^0,0)$.
Note, that we set $\beta=0$ since we need the negative Laplacian on both levels of time differentiation (zeroth and first order) to share the same eigenfunctions.

Our aim is to show linearised uniqueness, that is, injectivity of the  linearised operator $\mathbb{F}^\prime(s^0,b^0,0,(u^0_j)_{j\in\{1,2,3\}})$, which requires us to conclude $(ds,db,d\eta,(du_j)_{j\in\{1,2,3\}}) \allowbreak =0$ from 
\begin{align}\label{Fprime0}
&\begin{cases}
    b^0(du_j)_{tt} - s^0 \Delta (du_j) - \Delta (du_j)_t \\
\qquad-(d\eta) \phi^2 \, (\psi_j^2)'' - (ds) \,\Delta \phi\,\psi_j  + (db) \phi\,\psi_j''=0  
\qquad \text{ in }(0,T_j) \times \Omega \\
\gamma (du)_j + \nabla (du)_j \cdot \vecn = 0 \,
\qquad\qquad\qquad\qquad\qquad\qquad\text{ on }(0,T_j)\times\partial\Omega
\end{cases} \\ \nonumber
&\text{ and } \text{tr}_\Sigma (du_j)=0, 
\end{align}
with time periodicity conditions.
Taking inner products with $e^{-\imath m\omega_j t}$ in time and with eigenfunctions $\varphi^{\ell,k}$, $\ell\in\mathbb{N}$, $k\in K^\ell$ of the impedance Laplacian, corresponding to eigenvalues $\lambda^\ell$, and abbreviating
\[
\begin{aligned}
&a_s^{\ell,k}:=\left( (ds) \,(-\Delta) \phi,\,\varphi^{\ell,k}\right)_{L^2(\Omega)},\quad
a_b^{\ell,k}:=\left( (db) \phi,\,\varphi^{\ell,k}\right)_{L^2(\Omega)},\quad \\
&a_\eta^{\ell,k}:=\left( -(d\eta) \,\phi^2,\,\varphi^{\ell,k}\right)_{L^2(\Omega)},\quad c^{\ell,k}_{j,m}:=\frac{2}{T_j}\int_0^{T_j}\left((du_j)(t),\,\varphi^{\ell,k}\right)_{L^2(\Omega)}\, e^{-\imath m\omega_j t}dt\\
\end{aligned}
\]
as well as
\[
\begin{aligned}
&\widehat{\psi}_{j,m}:=\frac{2}{T_j}\int_0^{T_j}\psi_j(t)\, e^{-\imath m\omega_jt}dt,
&&\widehat{\psi''}_{j,m}:=\frac{2}{T_j}\int_0^{T_j}\psi_j''(t)\, e^{-\imath m\omega_jt}dt, \\
&\widehat{{\psi^2}''}_{j,m}:=\frac{2}{T_j}\int_0^{T_j}(\psi_j^2)''(t)\, e^{-\imath m\omega_jt}dt,
&&\mathcal{A}_{j,s}(z):=\frac{2}{T_j}\int_0^{T_j}\psi_j(t)\, e^{-zt}dt, \\
&\mathcal{A}_{j,b}(z):=\frac{2}{T_j}\int_0^{T_j}\psi_j''(t)\, e^{-zt}dt,
&&\mathcal{A}_{j,\eta}(z):=\frac{2}{T_j}\int_0^{T_j}{\psi_j^2}''(t)\, e^{-zt}dt\
\end{aligned}
\]
we can conclude from \eqref{Fprime0}
\[
\begin{aligned}
&(-m^2\omega_j^2b^0 + s^0\lambda^\ell + \imath m\omega_j \lambda^\ell)\,c^{\ell,k}_{j,m}
+\widehat{\psi}_{j,m}\,a_s^{\ell,k}+\widehat{\psi''}_{j,m} a_b^{\ell,k}+\widehat{{\psi^2}''}_{j,m} a_\eta^{\ell,k} = 0 \\
&\hspace*{6.5cm} m,\,\ell\in\mathbb{N},\ k\in\{1,\ldots,K^\ell\},\ j\in\{1,2,3\}\\
&\sum_{\ell\in\mathbb{N}}\sum_{k=1}^{K^\ell} c^{\ell,k}_{j,m}\text{tr}_\Sigma\varphi^{\ell,k}=0\hspace*{3cm} m\in\mathbb{N},\ j\in\{1,2,3\},
\end{aligned}
\]
hence
\[
\begin{aligned}
&\sum_{\ell\in\mathbb{N}}\frac{1}{-m^2\omega_j^2 b^0 + s^0\lambda^\ell + \imath m\omega_j \lambda^\ell}\sum_{k=1}^{K^\ell} 
\varphi^{\ell,k}(x_0)
\sum_{\mathclap{q\in\{s,b,\eta\}}} \mathcal{A}_{j,q}(\imath m\omega_j)\, a_q^{\ell,k}
=0\\
&\hspace*{7.5cm} m\in\mathbb{N},\ j\in\{1,2,3\},\,x_0 \in \Sigma.
\end{aligned}
\]
By analytic continuation over $\frac{1}{z}$ from the infinite set $\{\frac{1}{\imath m\omega}\, : \,m\in\mathbb{N}\}$ accumulating at zero, to the open set 
\[
\mathbb{O}=\mathbb{C}\setminus 
\left\{\frac{1}{z}\,:\, z^2b^0+s^0\lambda^\ell+z \lambda^\ell=0\right\},
\]
we can extend the above statement to 
\begin{align}
\nonumber
&\sum_{\ell\in\mathbb{N}}\frac{1}{z^2b^0+s^0\lambda^\ell+z \lambda^\ell}\sum_{k=1}^{K^\ell} 
\varphi^{\ell,k}(x_0)
\sum_{\mathclap{q\in\{s,b,\eta\}}} \mathcal{A}_{j,q}(z)\, a_q^{\ell,k}
=0,
\,\frac{1}{z}\in\mathbb{O},\ j\in\{1,2,3\},\ x_0\in\Sigma.
\end{align}
An elementary computation shows that the poles are given by
$\pole_\ell=\frac{1}{2b^0}(-\lambda^\ell\pm\sqrt{(\lambda^\ell)^2-4b^0s^0\lambda^\ell})$ and 
in particular that they are single.
Moroever, for each $\ell \in \mathbb{N}$ we can choose the pole such that $\real{\pole_\ell} < 0$ by taking the negative branch.
Thus, for any $\ell'\in\mathbb{N}$, multiplying with $(z-\pole_{\ell'})$ and letting $z$ tend to $\pole_{\ell'}$ singles out the $\ell'$th term of the outermost sum and yields
\begin{equation}\label{Ajqaq0}
\begin{aligned}
&\sum_{k=1}^{K^{\ell'}} 
\varphi^{\ell',k}(x_0)
\sum_{\mathclap{q\in\{s,b,\eta\}}} \mathcal{A}_{j,q}(\pole_{\ell'})\, a_q^{\ell',k}
=0
\quad
\ell'\in\mathbb{N},\ j\in\{1,2,3\},\ x_0\in\Sigma.
\end{aligned}
\end{equation}
In the following we will skip the prime on $\ell$.\\
We now argue that for each $\ell\in\mathbb{N}$, the matrix 
\[
A^\ell:=(\mathcal{A}_{j,q}(\pole_{\ell}))_{j\in\{1,2,3\},\,q\in\{s,b,\eta\}}
\]
can be made nonsingular by a proper choice of $\psi_1$, $\psi_2$, $\psi_3$.
To this end, we set $\psi_3=2\psi_1$, so that 
\[
A^\ell= \left(\begin{array}{ccc}
A^\ell_{1,s}&A^\ell_{1,b}&A^\ell_{1,\eta}\\
A^\ell_{2,s}&A^\ell_{2,b}&A^\ell_{2,\eta}\\
2A^\ell_{1,s}&2A^\ell_{1,b}&4A^\ell_{1,\eta}\\
\end{array}\right), \quad
\text{det}(A^\ell)=2(A^\ell_{1,s}A^\ell_{2,b}-A^\ell_{2,s}A^\ell_{1,b})A^\ell_{1,\eta}
\]
Using integration by parts in time and the periodicity of $\psi_j$ we obtain 
\[
\begin{aligned}
A^\ell_{j,b} &= \frac{2}{T_j} \int_0^{T_j} \psi_j''(t) e^{-\pole_\ell t} dt = \frac{2}{T_j}\left( \left[ \psi_j'(t) e^{-\pole_\ell t} \right]_{t = 0}^{T_j} - \int_0^{T_j} \psi_j'(t) (-\pole_\ell) e^{-\pole_\ell t} dt \right)  \\
&= \frac{2}{T_j}\left( \psi_j'(0) (e^{-\pole_\ell T_j} - 1) + \pole_\ell (\psi_j(0) (e^{-\pole_\ell T_j} - 1) + \pole_\ell \int_0^{T_j} \psi_j(t)e^{-\pole_\ell t} dt \right) \\
&=\frac{2}{T_j}(e^{-\pole_\ell T_j} - 1)\left(\psi_j'(0) + \pole_\ell \psi_j(0) \right) + \pole_\ell^2 A^\ell_{j,s}.
\end{aligned}
\]
Hence, we conclude that
\[
\begin{aligned}
2(A^\ell_{1,s}A^\ell_{2,b}-A^\ell_{2,s}A^\ell_{1,b})A^\ell_{1,\eta}=4A^\ell_{1,\eta} \Bigl( &\frac{(e^{-\pole_\ell T_2} - 1)\left(\psi_2'(0) + \pole_\ell \psi_2(0) \right)}{T_2} A^\ell_{1,s} \\
&\qquad- \frac{(e^{-\pole_\ell T_1} - 1)\left(\psi_1'(0) + \pole_\ell \psi_1(0) \right)}{T_1} A^\ell_{2,s} \Bigr).
\end{aligned}
\]
Here, $e^{-\pole_\ell T_j}$ never attains one.
Taking $T_1 \neq T_2$ and requiring $\psi_1$ and $\psi_2$ to be chosen such that $\forall \ell \in \mathbb{N}$
\begin{align}
\label{eq:pole:condition:source}
\mathcal{A}_{1,s}(\pole_\ell) \neq \frac{T_2(e^{-\pole_\ell T_1} - 1)\left(\psi_1'(0) + \pole_\ell \psi_1(0) \right)}{T_1 (e^{-\pole_\ell T_2} - 1)\left(\psi_2'(0) + \pole_\ell \psi_2(0) \right)} \mathcal{A}_{2,s}(\pole_\ell), 
\end{align}
as well as
\begin{align}
\label{eq:pole:condition:source:2}
    \mathcal{A}_{1,\eta}(\pole_\ell) \neq 0 \quad \ell \in \mathbb{N},
\end{align}
see Remark~\ref{rem:example}, 
we arrive at
\begin{align}
\nonumber
    \text{det}(A^\ell) \neq 0.
\end{align}
As a consequence of nonsingularity of $A^\ell$, we can conclude from \eqref{Ajqaq0} that 
\[
\begin{aligned}
&\sum_{k=1}^{K^{\ell}} 
\varphi^{\ell,k}(x_0)\, a_q^{\ell,k}=0
\quad
\ell\in\mathbb{N},\ {q\in\{s,b,\eta\}},\ x_0\in\Sigma,
\end{aligned}
\]
that is, for the projections onto the eigenspaces $\mathbb{E}^\ell:=\text{span}\{\varphi^{\ell,k}\, : \, k\in\{1,\ldots,K^{\ell'}\} \}$, we obtain
\[
\text{tr}_\Sigma \text{Proj}_{\mathbb{E}^\ell} \tilde{q}=0, \qquad\tilde{q}\in\{-\Delta\phi (ds),\,-\Delta\phi (db),\,\phi^2(d\eta)\} 
\quad
\ell\in\mathbb{N}.
\]
In order to conclude that $a^{\ell,k}_q = 0$, we impose for any $\ell \in \mathbb{N}$ and ${q\in\{s,b,\eta\}}$ that
\begin{align}
\label{eq:lin:injectivity:structural:condition}
\left(\sum_{k=1}^{K^{\ell}} 
\varphi^{\ell,k}(x_0)\, a_q^{\ell,k}=0, \, x_0\in\Sigma\right) \Longrightarrow \left(a^{\ell,k}_q = 0, \, \forall k\in \{1, \ldots, K^\ell\}\right).
\end{align}
This structural assumption is satisfied by unique continuation under appropriate conditions on $\Sigma$, i.e., there exists a $x \in \partial \Omega$ and $r > 0$ such that $\partial \Omega \cap B_r(x) \subset \Sigma$, and by the fact that if $\varphi^{\ell,k} = 0$ on $\Sigma$, we also have $\nabla \varphi^{\ell,k} \cdot \vecn = 0$ on $\Sigma$ (due to the impedance boundary conditions),~\cite[cf. Theorem 4.1]{arendt2008} on the individual eigenspaces yields
\[
\text{Proj}_{\mathbb{E}^\ell} \tilde{q}=0, \qquad\tilde{q}\in\{-\Delta\phi (ds),\, \phi (db),\,-\phi^2(d\eta)\}
\quad
\ell\in\mathbb{N}.
\]
Therefore, by summing up over $\ell\in\mathbb{N}$ and using our assumption of $\phi$ and $\Delta\phi$ being nonzero almost everywhere in $\Omega$\footnote{e.g., $\phi=\phi^{n,k}$ some fixed $n$, $k$, which is also a choice that allows the construction of space-time separable approximate states}, we have $ds=0$, $db=0$, and $d\eta=0$, as desired. 
Thus, we have shown the following theorem.
\begin{theorem}
\label{thm:linearised:uniqueness}
Assume that the assumptions on the boundary and parameters in Theorem~\ref{thm:westervelt:nonlinear:solution:existence:uniqueness:source:boundary} hold, $\Sigma \subset \partial\Omega$ being either a discrete set of points satisfying the structural assumption in~\eqref{eq:lin:injectivity:structural:condition} or $\Sigma \subset \partial\Omega$ being an open subset in the boundary topology, and that the reference states take the form~\eqref{eq:reference:state:space:time}, where $\psi_j \in H^2(0,T_j)$ are periodic, $\phi \in \mathcal{D}(-\Delta), \phi \neq 0, \Delta\phi\neq 0$ a.e. in $\Omega$ and satisfy the conditions~\eqref{eq:pole:condition:source} and~\eqref{eq:pole:condition:source:2}. 
\newline
Then, $\mathbb{F}_j^\prime(s^0,b^0,0,u_j^0)(ds,db,d\eta,\du_j) = 0$ for $j \in \{1,2,3\}$ implies $(ds,db,d\eta,\allowbreak \du_1,\allowbreak \du_2, \du_3) = 0$.
\end{theorem}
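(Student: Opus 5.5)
The plan is to follow the spectral argument laid out above and organise it as a proof. We assume $\mathbb{F}_j'(s^0,b^0,0,u_j^0)(ds,db,d\eta,\du_j)=0$ for $j\in\{1,2,3\}$. Then $\du_j$ is a $T_j$-periodic solution of the linear problem \eqref{Fprime0} with vanishing trace on $\Sigma$.

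\emph{Step 1: spectral coefficients.} Test \eqref{Fprime0} against $e^{-\imath m\omega_j t}\varphi^{\ell,k}$. Since $\beta=0$, the operators $s^0(-\Delta)$ and $(-\Delta)\partial_t$ share the impedance eigenfunctions, so the equation diagonalises. Solving for $c^{\ell,k}_{j,m}$ and inserting into $\text{tr}_\Sigma \du_j=0$ gives the identity in $m$.

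\emph{Step 2: isolate one eigenvalue.} View the resulting series as a meromorphic function of $z$. Its convergence should follow from $\phi\in\mathcal{D}(-\Delta)$ and the $L^\infty$ increments, which put the $a_q^{\ell,k}$ in $\ell^2$. The coefficient $1/(z^2b^0+s^0\lambda^\ell+z\lambda^\ell)$ decays like $1/\lambda^\ell$. Because the function vanishes on a sequence accumulating at $1/z=0$, the identity theorem extends it to $\mathbb{O}$. The poles $\pole_\ell$ are simple and can be chosen with negative real part. Taking residues at $\pole_\ell$ then yields \eqref{Ajqaq0} for every $\ell$.

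\emph{Step 3: invert the $3\times3$ system.} Choose $\psi_3=2\psi_1$. The first and third rows of $A^\ell$ then differ only in the $\eta$ entry, by a factor $2$ versus $4$, since $(\psi_3^2)''=4(\psi_1^2)''$. This gives $\det A^\ell=2(A^\ell_{1,s}A^\ell_{2,b}-A^\ell_{2,s}A^\ell_{1,b})A^\ell_{1,\eta}$. Integrating by parts twice gives $A^\ell_{j,b}=\pole_\ell^2A^\ell_{j,s}+\tfrac{2}{T_j}(e^{-\pole_\ell T_j}-1)(\psi_j'(0)+\pole_\ell\psi_j(0))$, which reduces the determinant to the quantities in \eqref{eq:pole:condition:source} and \eqref{eq:pole:condition:source:2}. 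These conditions therefore give $\det A^\ell\neq0$, so $\sum_k\varphi^{\ell,k}(x_0)a_q^{\ell,k}=0$ on $\Sigma$ for each $q$ separately.

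\emph{Step 4: unique continuation and conclusion.} If $\Sigma$ is discrete, the structural assumption \eqref{eq:lin:injectivity:structural:condition} is used directly. If $\Sigma$ is relatively open, the function $v=\sum_k a_q^{\ell,k}\varphi^{\ell,k}$ is an eigenfunction whose Dirichlet trace vanishes on $\Sigma$, and the Robin condition then makes its normal derivative vanish there too. Unique continuation \cite[cf. Theorem 4.1]{arendt2008} gives $v=0$, hence $a_q^{\ell,k}=0$. Summing over $\ell$ shows that $ds\,\Delta\phi$, $db\,\phi$ and $d\eta\,\phi^2$ vanish. Since $\phi\neq0$ and $\Delta\phi\neq0$ a.e., we get $ds=db=d\eta=0$. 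Then \eqref{Fprime0} is homogeneous with positive constant coefficients, and uniqueness in Theorem~\ref{thm:two:sources:westervelt:linear:solution:existence:uniqueness:mixed:boundary:space:dependent} (or directly $c^{\ell,k}_{j,m}=0$) gives $\du_j=0$.

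The main obstacle is Step 2. The difficulty is justifying that the series defines an analytic function of $1/z$ near $0$, and that taking residues term by term is legitimate. This requires uniform convergence on compacts away from the poles, and the poles must not accumulate except at infinity. Since $\pole_\ell\sim-\lambda^\ell/b^0$ for the negative branch, they tend to infinity. I would first try to bound the tail by $\sum_\ell|a^{\ell,k}_q||\varphi^{\ell,k}(x_0)|/\lambda^\ell$, using trace estimates on the eigenfunctions. The point evaluations $\varphi^{\ell,k}(x_0)$ may have to be understood weakly, by testing against functions on $\Sigma$.
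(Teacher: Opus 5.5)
\textbf{There is a genuine gap, and it cannot be closed.} Your Steps 1--4 reproduce the paper's argument. The step you flag as the main obstacle is not a matter of convergence estimates: it is false, and the conclusion fails with it.

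The identity theorem needs the accumulation point of the zeros $1/(\imath m\omega_j)$ to lie in a connected open set on which the function of $w=1/z$ is analytic. The point $w=0$ is not such a point, for two reasons:
\begin{itemize}
\item Your own asymptotics $\mathfrak{p}_\ell\sim-\lambda^\ell/b^0\to\infty$ mean $1/\mathfrak{p}_\ell\to 0$. (The other branch of poles accumulates at $z=-s^0$, so the series is not even meromorphic on $\mathbb{C}$.)
\item Independently of the poles, each $\mathcal{A}_{j,q}(z)=\tfrac{2}{T_j}\int_0^{T_j}(\cdot)\,e^{-zt}\,dt$ has an essential singularity at $z=\infty$.
\end{itemize}
Worse, the vanishing at $z=\imath m\omega_j$ is mostly vacuous. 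The values $\mathcal{A}_{j,q}(\imath m\omega_j)$ are just Fourier coefficients of $\psi_j$, $\psi_j''$, $(\psi_j^2)''$. For the $\psi_j$ of Remark~\ref{rem:example} they vanish for every $m\ge 3$, through the factor $1-e^{-zT_j}$. This is the same way that $e^{-T_j/w}-1$ vanishes at all $w=1/(\imath m\omega_j)$ without being identically zero. What you actually have is Cauchy data on $\Sigma$ at finitely many frequencies. That is an inverse source problem, and such problems admit non-radiating sources.

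\textbf{Counterexample.} Take the $\psi_j$ of Remark~\ref{rem:example}, with $\psi_3=2\psi_1$. Let $P_k:=-k^2b^0-(s^0+\imath k)\Delta$, and let $Q:=\prod_k P_k\overline{P_k}$ over $k\in\{\omega_1,2\omega_1,\omega_2,2\omega_2\}$; this is a real constant-coefficient operator. Pick a real $0\neq\chi\in C_c^\infty(\Omega)$ supported where $|\phi|\ge c>0$. Set $ds=db=0$ and $d\eta:=Q\chi/\phi^2\in L^\infty(\Omega)$.

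Since $(\psi_j^2)''$ contains only the frequencies $\omega_j$ and $2\omega_j$, every temporal mode of the source $d\eta\,\phi^2(\psi_j^2)''$ is a real multiple of $Q\chi=P_{m\omega_j}\bigl(R_{m,j}\chi\bigr)$. Here $R_{m,j}$ denotes the product of the remaining factors of $Q$. Hence \eqref{Fprime0} has $T_j$-periodic solutions $(du)_j$ with compact support in $x$. These satisfy the Robin condition and have zero trace on $\Sigma$. Yet $d\eta\neq 0$, because a nonzero constant-coefficient operator is injective on $C_c^\infty$.

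So no tail bound or weak reading of $\varphi^{\ell,k}(x_0)$ can rescue Step 2. The paper takes exactly the same unjustified continuation ``over $\tfrac{1}{z}$ \ldots\ to the open set $\mathbb{O}$'', so following it does not close the gap. A correct statement would at least need infinitely many active harmonics in the $\psi_j$ and a different mechanism for separating the eigenvalues $\lambda^\ell$.
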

\begin{remark} \label{rem:example}
As an example of a setting that satisfies the assumptions of Theorem~\ref{thm:linearised:uniqueness} 
consider $\psi_j(t) = \cos(\tfrac{2\pi}{T_j} t) + \mathfrak{a}$, $\mathfrak{a}\in (-\infty, -1) \cup(-1/4,\infty)$, for $j\in \{1,2\}$, with $T_1 \neq T_2 > 0$. Then, $\psi_1(0) = \psi_2(0) =1+\mathfrak{a}\neq 0$ 
$\psi_1'(0) = \psi_2'(0)=0$,
and $\mathcal{A}_{j,s}(z)$ reads
\[
\mathcal{A}_{j,s} (z) = \frac{2}{T_j}\int_{0}^{T_j} \psi_j(t) e^{-zt}\,dt = 
\begin{cases}
\frac{2(1-e^{-zT_j})(\mathfrak{a}(z^2+\omega_j^2) + z^2)}{z\,T_j(z^2 + \omega_j^2)}, & \quad z\in\mathbb{C}\setminus\{0\}\\
2\mathfrak{a},  &\quad z=0\\
\end{cases},
\]
where $\omega_j := \frac{2 \pi}{T_j}$. We chose the poles such that they have negative real part, hence it is sufficient to show that~\eqref{eq:pole:condition:source} holds for $\real{\pole_\ell} < 0$.  
\\
To this end, on the contrary assume that there exists $z \in \mathbb{C}$, with $\real{z} < 0$ such that
\[
\mathcal{A}_{1,s}(z) = \frac{T_2 (\psi_1'(0) + z\psi_1(0)) (e^{-z T_1}-1)}{T_1 (\psi_2'(0) + z\psi_2(0))(e^{-z T_2}-1)}\mathcal{A}_{2,s}(z),
\] 
which in our setting reads as
\begin{equation}\label{eq:A1A2}
    \frac{\mathcal{A}_{1,s}(z)}{\mathcal{A}_{2,s}(z)} = \frac{T_2 (e^{-z T_1}-1)}{T_1 (e^{-z T_2}-1)}.
\end{equation}
Here we made use of the fact that $\mathcal{A}_{2,s}(z)$ has no zeros and $\mathcal{A}_{1,s}(z)$ has no poles in the left half plane.
Using the explicit expression for $\mathcal{A}_{1,s}(z)$ and $\mathcal{A}_{2,s}(z)$ from above, we obtain
\begin{align}
    \nonumber
\frac{\mathcal{A}_{1,s}(z)}{\mathcal{A}_{2,s}(z)} = \frac{T_2 (e^{-z T_1}-1)(\mathfrak{a}(z^2 + \omega^2_1)(z^2+\omega_2^2) + z^2(z^2 + \omega_2^2))}{T_1 (e^{-z T_2}-1)(\mathfrak{a}(z^2 + \omega^2_1)(z^2+\omega_2^2) + z^2(z^2 + \omega_1^2))}.
\end{align}
With this, \eqref{eq:A1A2} is equivalent to
\begin{align}
    \nonumber
    \frac{(\mathfrak{a}(z^2 + \omega^2_1)(z^2+\omega_2^2) + z^2(z^2 + \omega_2^2))}{(\mathfrak{a}(z^2 + \omega^2_1)(z^2+\omega_2^2) + z^2(z^2 + \omega_1^2))}  = 1.
\end{align}
This boils down to $\omega_1=\omega_2$, which is a contradiction to $T_1 \neq T_2$. It remains to check whether $\mathcal{A}_{1,\eta}(z)\neq 0 $. However, $\mathcal{A}_{1,\eta}(z)$ reads
\begin{align}
    \nonumber
\mathcal{A}_{1,\eta}(z) &= \frac{2}{T_1} \int_{0}^{T_1} \left( \frac{\partial^2}{\partial t^2}(\cos(\tfrac{2\pi}{T_1} t) + \mathfrak{a})^2 \right) e^{-z t}  \,dt \\ \nonumber
&= \frac{16\pi^2}{T_1^3} z (1 - e^{-zT_1})\left(\frac{1}{z^2 + 4\omega_1^2} + \frac{\mathfrak{a}}{z^2 + \omega_1^2} \right) ,
\end{align}
which is non zero by the imposed domains on $z$ and $\mathfrak{a}$. We further note that, if $\psi \in H^2_{\text{per}}(0,T)$, it has a $C^1([0,T])$ representative and $\psi(0) = \psi(T)$, hence, there exists a $t_0 \in (0,T)$ such that $\psi_t(t_0) = 0$.
\end{remark}

%% file: IRGNM_L2L2.tex
\newcommand{\lts}{\tilde{s}}      
\newcommand{\ltb}{\tilde{b}}      
\newcommand{\lte}{\tilde{\eta}}   
\subsection{Iterative reconstruction of \texorpdfstring{$s$, $b$, and $\eta$}{s, b, and eta} from boundary measurements}
\label{sec:reconstruction:algo}
In what follows we consider the (exact) measurement data $h=F(s^\dag,b^\dag,\eta^\dag)$ on the boundary part $\Sigma$ under the excitation $g$ on 
$
\partial\Omega$ and our goal is to reconstruct $(s^\dag,b^\dag,\eta^\dag)$. We are facing two problems here. First, we do not have injectivity of the forward operator for arbitrary parameters $(s,b,\eta)$; second, measurement devices may be imperfect and we possibly will have to deal with noisy measurements $h^\delta$ and the lack of continuous invertibility of $F$, that is, ill-posedness of the inverse problem. 
We denote the noise level by $\delta>0$ such that 
\begin{equation}
\label{eq:noise:eq}
    \|F(s^\dag,b^\dag,\eta^\dag) - h^\delta\|_{L^2(0,T;L^2(\Sigma))} \leq \delta.
\end{equation}
A solution to the first problem is to consider an all-at-once formulation \eqref{eq:aao} using the aforementioned linearised uniqueness result. From physical measurements we obtain an observation vector $\vec{h}^\delta = (0, g_1, h_1^\delta, 0, g_2, h_2^\delta, 0, g_3, h_3^\delta)$ where $g_j$ are the corresponding boundary sources. With that we can conclude formal well-definedness of a frozen Newton method. For the second problem regularization needs to be applied. Convergence of the resulting iterative reconstruction scheme demands structural conditions on the forward operator. One of the conditions that allows for convergence is range invariance of the linearised all-at-once forward operator $\mathbb{F}^\prime$~\cite{Kaltenbacher23Convergence}. We achieve this by establishing effective increments $(\vec{ds}, \vec{db}, \vec{d\eta}, \vec{du})$, possibly time dependent, such that
\begin{equation}
    \label{eq:range:invariance}
    \vec{\mathbb{F}}(\vec{s},\vec{b},\vec{\eta},\vec{u}) - \vec{\mathbb{F}}(\vec{s^0},\vec{b^0},\vec{\eta^0},\vec{u^0}) = \vec{\mathbb{F}^\prime}(\vec{s^0},\vec{b^0},\vec{\eta^0},\vec{u^0})(\vec{ds}, \vec{db}, \vec{d\eta}, \vec{du}),
\end{equation}
where $\vec{\mathbb{F}} = (\mathbb{F}_1, \mathbb{F}_2, \mathbb{F}_3)$ and $\mathbb{F}_j^\prime(s^0_j,b^0_j,\eta^0_j,u^0_j)$ reads 
\[
\begin{aligned}
&\mathbb{F^\prime}_j^{mod}(s^0_j,b^0_j,\eta^0_j,u_j^0) (ds_j,db_j,d\eta_j,(du)_j)=\\ 
&\begin{cases}
(b^0_j(du)_j - 2\eta^0_j u_j^0 (du)_j - d\eta_j (u_j^0)^2 + db_j u^0_j)_{tt} - s_j^0 \Delta (du)_j -  \Delta (du)_{j\,t} - ds_j \Delta u_j^0  \\
\hspace*{6.5cm}\text{ in }(0,T_j)\times\Omega \text{ with time periodicity}\\ 
\gamma (du)_j+\nabla (du)_j\cdot\mathbf{n}  \text{ on } (0,T_j)\times \partial\Omega, \\
\end{cases}\\
&\mathbb{F^\prime}_j^{obs}((du)_j)= \text{tr}_\Sigma (du)_j,
\end{aligned}
\]
One readily checks that 
\begin{equation}\label{eq:defr}
\begin{aligned}
    &du_j := u_j- u^0_j, \\ 
    &db_j := b_j - b^0_j, \\
    &ds_j := ds_j (s_j,u_j) =  \frac{(s_j-s^0_j)\Delta u_j}{\Delta u^0_j},\\
    &d\eta_j := d\eta_j(\eta_j,b_j,u_j) = \frac{1}{(u^0_j)^2} \Bigl(  \eta^0_j(u_j-u^0_j)^2 + (\eta_j - \eta^0_j)u_j^2 - (b_j - b^0_j)(u_j - u^0_j)\Bigr)
\end{aligned}
\end{equation}
fulfils~\eqref{eq:range:invariance}, where we additionally assume that 
\begin{align}
\label{eq:reference:state:bounded:from:below}
\exists c_u > 0:  |u^0_j| \geq  c_u > 0,\, |\Delta u^0_j| \geq c_u > 0,\ \text{ a.e. in } (0,T_j) \times \Omega, \, j \in \{1,2,3\},
\end{align}
which is justified by the fact that $u^0_j$ does not need to be a solution to the underlying PDE. 
The time dependence of $u_j$ and $u^0_j$ leads to time dependent effective increments $d\tilde{\eta}_j$ and $d\tilde{s}_j$. We further notice that the effective increments are in $L^2(0,T_j;L^2(\Omega))$. 
Thus, we need to lift the parameters $s_j, b_j$, and $\eta_j$ to be time dependent, denoting their time-dependent versions by $\lts_j, \ltb_j$, and $\lte_j$. Note, that once we lift the parameters, we need as many copies as we have reference states. Since the increased dimensionality clearly counteracts uniqueness and as we aim for identifying only a single set of time-independent parameters $s(x)$, $b(x)$ and $\eta(x)$ we introduce the penalization operator
\begin{align}
    P_j: (\lts_j, \ltb_j, \lte_j) \mapsto (\lts_j - \text{Proj}^{L^2(0,T_j)_w}_\text{const} \lts_j, \ltb_j - \text{Proj}^{L^2(0,T_j)_w}_\text{const}\ltb_j, \lte_j - \text{Proj}^{L^2(0,T_j)_w}_\text{const} \lte_j),
\end{align}
where $\text{Proj}^{L^2(0,T_j)_w}_\text{const}$ is the $L^2_w$ projection on the space of time constant functions with a weight function $w: [0,T_j] \rightarrow \mathbb{R}^+$ with $w > 0$ and $w \in L^1(0,T_j)\cap L^2(0,T_j)$. In view of the linearised uniqueness result above we define
\begin{equation}\label{eq:defKr}
\begin{aligned}
    & K:= \vec{\mathbb{F}}^\prime(s^0,b^0,\eta^0,\vec{u}^{\,0}), \quad \vec{h^0} = \vec{h}^\delta - \vec{\mathbb{F}}(s^0,b^0,\eta^0, \vec{u}^{\,0}), \\ \nonumber
    & r: (\lts, \ltb, \lte,\vec{u}) \mapsto ({ds}_j(\lts_j, u_j), db_j, d\eta_j(\lte_j, \ltb_j, u_j), du_j)_{j \in \{1,2,3\}},
\end{aligned}
\end{equation}
where we set $\vec{x}^{\,0} := (s^0_j, b^0_j, \eta^0_j, u_j^{0})_{j \in \{1,2,3\}}$.
Hence, we can frame the inverse problem, whose solution we denote by $\vec{x}^{\,\dag}$, as a combination of an ill-posed linear problem and a well-posed nonlinear problem
\begin{align}
    \nonumber
    K\hat{r} & = \vec{h}^0,  \\ \nonumber r(\lts,\ltb, \lte, \vec{u}) & = \hat{r}, \\ \nonumber
    \vec{P}(\lts,\ltb,\lte) &= 0.
\end{align}
We make use of this problem structure by formulating a regularized frozen Newton type method where we expect $r(\vx) \approx \vx- \vxz$ in a sufficiently small neighborhood of $\vxd$ (see Lemma~\ref{lemma:r:cont:diff}) with iterates 
\begin{align}
\label{identification:iterates:regularized:formulation}
    \vec{x}_{n+1}^{\,\delta} \in \underset{ \vec{x} = (\lts,\ltb,\lte, \vec{u}) \in \mathcal{B}_\rho(\vec{x}^0)}{\text{argmin}} J_n(\vec{x};\, \vec{x}_n),
\end{align}
where we define
\begin{align}
\label{eq:cost:function}
    J_n(\vec{x};\, \vec{x}_n) :=\| K(\vec{x} - \vec{x}_n^{\,\delta}) + \vec{\mathbb{F}}(\vec{x}_n^{\,\delta}) - \vec{h}^\delta\|_Y^{p_1} + \alpha_n R(\vec{x}) + \|\vec{P}(\vec{x})\|_{Z}^2, 
\end{align}
with $p_1 \in [1, \infty)$, and some sufficiently small neighborhood $\mathcal{B}_\rho(\vec{x}^0)$ of a reference point $\vec{x}^0$ that should also contain the exact solution 
\[
\vec{x}^{\,\dag}\in\mathcal{B}_\rho(\vec{x}^0)\subseteq \mathcal{D}(\vec{\mathbb{F}})\subseteq X.
\]
Here we work in a function space setting 
\[
\begin{aligned}
\mathcal{D}(\vec{\mathbb{F}}) \subset X &= \prod_{j=1}^3  L^2(0,T_j;L^2(\Omega))^3 \times X_{u_j}, 
\end{aligned}
\]
with 
\[
X_{u_j} = H^1(0,T_j;H^{2 + \tau}(\Omega)) \cap \solspace_j
\hookrightarrow L^{\infty}(0,T_j;W^{2,\infty}(\Omega))
\]
where $\tau > \frac{d}{2}$ and $\solspace_j$ denotes the actual solution space (cf. \eqref{eq:defH})
corresponding to solutions with period $T_j$, which continuously embeds into $L^\infty(0,T_j;L^\infty(\Omega))$. 
Note that we aim to stay in a Hilbert space setting in order to avoid possible further nonlinearity and/or non-differentiability induced by duality mappings in Banach spaces.
In order to be able to estimate the nonlinear terms, we impose higher regularity on the state spaces to still allow Lebesgue spaces (more precisely their Hilbert space versions $L^2(0,T;L^2(\Omega))$) for the parameters, thus comprising the practically relevant case of discontinuities in the coefficients.
The data space is chosen as 
\[
Y = 
\begin{cases}
\displaystyle\prod_{j=1}^3 (X_{u_j}^*  \times L^2(0,T_j; L^2(\partial\Omega)) \times L^2(0,T_j;L^2(\Sigma)))\text{ if $\Sigma$ is a smooth manifold} \\
\displaystyle\prod_{j=1}^3 (X_{u_j}^* \times L^2(0,T_j, L^2(\partial\Omega)) \times L^2(0,T_j;l^2(\Sigma)))\text{ if $\Sigma$ is a discrete set},
\end{cases}
\]
and $R: X \rightarrow [0,\infty]$ a proper convex weakly lower semi-continuous functional. 

With $R(\vec{x}) = \|\vec{x} - \vec{x}^{\,0}\|_X^2$  and $p_1 = 2$ we can write~\eqref{identification:iterates:regularized:formulation} in terms of its necessary and sufficient (due to convexity) first order optimality conditions, $\tfrac{d}{dv} J_n(\vec{x} + v \vec{h})|_{v=0} = 0$,
\begin{align}
    \vec{x}_{n+1}^\delta = \vec{x}_n^\delta + (K^*K + \vec{P}^*\vec{P}+ \alpha_n \text{Id})^{-1} (K^*(\vec{h}^\delta - \vec{\mathbb{F}}(\vec{x}_n^\delta)) - \vec{P}^*\vec{P}\vec{x}_n^{\,\delta} + \alpha_n (\vec{x}^{\,0} - \vec{x}_n^{\,\delta})),
\end{align}
where $K^*$ denotes the Hilbert space adjoint of $K: X \rightarrow Y$. Taking into account~\eqref{eq:noise:eq} 
for data with noise level $\delta$ one has to stop the iteration (with stopping index $n^*(\delta)$) according to 
\begin{align}
\label{eq:stopping:index}
     n^*(\delta) \rightarrow \infty,\,\delta \sum_{k = 0}^{n^*(\delta)-1} c_\rho^k \alpha_{n^*(\delta) - k - 1}^{-1/2} \rightarrow 0 \quad\text{ as } \delta \rightarrow 0,
\end{align}
where $c_\rho \in (0,1)$ is like in Lemma~\ref{lemma:r:cont:diff} and with $\alpha_n = \alpha_0 q^n$ for some $q \in (0,1)$ this corresponds to the classical a priori choice $\alpha_{n^*(\delta)} \rightarrow 0$ and $\delta^2/\alpha_{n^*(\delta)} \rightarrow 0$ as $\delta \rightarrow 0$~\cite{Kaltenbacher23Convergence}.
\begin{lemma}
\label{lemma:r:cont:diff}
    Under condition~\eqref{eq:reference:state:bounded:from:below}, the map $r: X \rightarrow X$ is continuously Fr\'{e}chet differentiable, $r'(\vec{x}^{\,0})^{-1} \in L(X)$ and there exists $\rho > 0$ sufficiently small and a constant $c_\rho \in (0,1)$ such that for any $\vec{x}^{\,0}$ with 
    $ \|\vec{x}^{\,0}-\vec{x}^{\,\dag}\|_X<\rho$, we have the estimate
    \begin{equation}\label{eq:rid}
    \|r(\vec{x}) - (\vec{x} - \vec{x}^{\,0})\|_{X} \leq c_\rho \| \vec{x} - \vec{x}^{\,0}\|_{X} \quad \forall \vec{x} \in 
    B_\rho(\vec{x}^{\,0}).
    \end{equation}
    Moreover, $r$ is injective on $B_\rho(\vec{x}^{\,0})$.
\end{lemma}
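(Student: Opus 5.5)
The plan is to treat $r$ componentwise, using the explicit formulas \eqref{eq:defr}. Write $\vec{x}=(\lts,\ltb,\lte,\vec u)$, $\vec{x}^{\,0}=(s^0,b^0,\eta^0,\vec u^{\,0})$ and $d\vec{x}=\vec{x}-\vec{x}^{\,0}$. Then $r(\vec x)-(\vec x-\vec x^{\,0})$ has vanishing $b$- and $u$-components. Its remaining components are
\[
\frac{(\lts_j-s^0_j)\,\Delta(u_j-u^0_j)}{\Delta u^0_j}
\]
and
\[
\frac{1}{(u^0_j)^2}\Bigl(\eta^0_j(u_j-u^0_j)^2+(\lte_j-\eta^0_j)\bigl(u_j^2-(u^0_j)^2\bigr)-(\ltb_j-b^0_j)(u_j-u^0_j)\Bigr).
\]
Each of these is at least quadratic in the increment, and in every product exactly one factor (or none) is a parameter increment in $L^2(0,T_j;L^2(\Omega))$. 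Every other factor is built from $u_j-u^0_j$ or $\Delta(u_j-u^0_j)$.

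The key tool is the embedding $X_{u_j}\hookrightarrow L^\infty(0,T_j;W^{2,\infty}(\Omega))$, together with the lower bounds \eqref{eq:reference:state:bounded:from:below}, which give $\|1/u^0_j\|_{L^\infty},\|1/\Delta u^0_j\|_{L^\infty}\le 1/c_u$. With these, each quadratic term is estimated by Hölder's inequality with an $L^2\times L^\infty$ split. Two examples:
\[
\|(\lts_j-s^0_j)\Delta(u_j-u^0_j)/\Delta u^0_j\|_{L^2L^2}\le c_u^{-1}C_{emb}\|\lts_j-s^0_j\|_{L^2L^2}\|u_j-u^0_j\|_{X_{u_j}},
\]
and
\[
\|u_j^2-(u^0_j)^2\|_{L^\infty}\le \|u_j-u^0_j\|_{L^\infty}\bigl(2\|u^0_j\|_{L^\infty}+\|u_j-u^0_j\|_{L^\infty}\bigr).
\]
The term $\eta^0_j(u_j-u^0_j)^2$ needs $\eta^0_j\in L^\infty$; in the paper's setting $\eta^0=0$, so this term disappears, and otherwise one assumes it bounded. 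Collecting terms gives $\|r(\vec x)-(\vec x-\vec x^{\,0})\|_X\le C(c_u,\vec u^{\,0})(1+\rho)\|d\vec x\|_X^2\le C\rho\|d\vec x\|_X$ on $B_\rho(\vec x^{\,0})$. Choosing $\rho$ small makes $c_\rho:=C\rho(1+\rho)<1$, which is \eqref{eq:rid}. The assumption $\|\vec x^{\,0}-\vec x^{\,\dag}\|<\rho$ only serves to put $\vec x^{\,\dag}$ in the ball; it does not enter the estimate.

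Differentiability follows from the same structure. Each component of $r$ is a polynomial of degree at most three in $(\lts,\ltb,\lte,\vec u)$, multiplied by the fixed $L^\infty$ functions $1/\Delta u^0_j$ and $1/(u^0_j)^2$. Multiplication $L^2\times L^\infty\to L^2$ is a bounded bilinear map, and the map $u_j\mapsto u_j$ or $\Delta u_j$ from $X_{u_j}$ into $L^\infty$ is bounded linear. Hence $r$ is a sum of bounded multilinear maps, and is therefore $C^\infty$ (in particular $C^1$) from $X$ into $X$. Its $u$-components land in $X_{u_j}$ trivially. The derivative at $\vec x^{\,0}$ is obtained by differentiating the quadratic remainder, which vanishes at $d\vec x=0$; this gives $r'(\vec x^{\,0})=\mathrm{id}$, so its inverse lies in $L(X)$.

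Injectivity comes from a contraction argument. For $\vec x,\vec y\in B_\rho(\vec x^{\,0})$, the bilinear bounds applied to differences of the quadratic terms give
\[
\|(r(\vec x)-\vec x)-(r(\vec y)-\vec y)\|_X\le c_\rho\|\vec x-\vec y\|_X .
\]
So $\|r(\vec x)-r(\vec y)\|_X\ge(1-c_\rho)\|\vec x-\vec y\|_X$, which is injectivity. Equivalently, one can use $\|r'(\vec z)-\mathrm{id}\|\le c_\rho$ on the convex ball together with the mean value theorem.

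The main obstacle I expect is the target-space bookkeeping, namely making sure every product really is in $L^2(0,T_j;L^2(\Omega))$ with only one $L^2$ factor. This is exactly where the strengthened regularity $X_{u_j}\hookrightarrow L^\infty W^{2,\infty}$ and the lower bounds on $u^0_j$ and $\Delta u^0_j$ are indispensable: the product of two $L^2$ parameter increments never occurs. The other point to check is that the ball radius $\rho$ simultaneously controls $\|u_j\|_{L^\infty}$, which it does via the embedding.
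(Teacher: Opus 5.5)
Your proposal is correct, and for the core estimate \eqref{eq:rid} it matches the paper's argument. You write $r(\vec{x})-(\vec{x}-\vec{x}^{\,0})$ explicitly, bound $1/\Delta u^0_j$ and $1/(u^0_j)^2$ via $c_u$, and split every product as $L^2\times L^\infty$ using $X_{u_j}\hookrightarrow L^\infty(0,T_j;W^{2,\infty}(\Omega))$.

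You depart from the paper in the other two claims.
\begin{itemize}
\item For $C^1$, the paper estimates $\|r'(\vec{x})h-h\|_X$ and $\|r'(\vec{x})h-r'(\tilde{\vec{x}})h\|_X$ term by term. This gives $\|r'(\vec{x})-\mathrm{id}\|_{L(X)}\le c_\rho$ on the ball, and a Lipschitz constant $C_\rho$ for $r'$ with explicit dependence on $c_u$, $C_{u_j}$, $\|\vec{x}^{\,0}\|_X$. You instead observe that $r$ is a polynomial of degree three built from bounded multilinear maps into $L^2$. That yields smoothness and $r'(\vec{x}^{\,0})=\mathrm{id}$ at once and spares the bookkeeping, at the cost of leaving those constants implicit.
\item For injectivity, the paper invokes the Inverse Function Theorem. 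On its own, that only gives injectivity on some neighbourhood of $\vec{x}^{\,0}$, not necessarily on all of $B_\rho(\vec{x}^{\,0})$. Your bound $\|r(\vec{x})-r(\vec{y})\|_X\ge(1-c_\rho)\|\vec{x}-\vec{y}\|_X$ gives exactly the claimed injectivity on the whole ball. Equivalently, you can use the mean value theorem with $\|r'(\vec{z})-\mathrm{id}\|\le c_\rho$ on the convex ball, a bound the paper does prove. So your route is the tighter one here.
\end{itemize}

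Two small corrections.
\begin{itemize}
\item The term $\eta^0_j(u_j-u^0_j)^2$ does not need $\eta^0_j\in L^\infty$. Since $(u_j-u^0_j)^2\in L^\infty$, having $\eta^0_j\in L^2(0,T_j;L^2(\Omega))$ already makes the product lie in $L^2$. This is how the paper bounds it; it only adds $\|\eta^0_j\|_{L^2}$ to your constant.
\item The statement fixes $\rho$ and $c_\rho$ before $\vec{x}^{\,0}$. So the hypothesis $\|\vec{x}^{\,0}-\vec{x}^{\,\dag}\|_X<\rho$ does enter, mildly: it bounds $\|\vec{u}^{\,0}\|$ and $\|\eta^0\|$ uniformly by $\|\vec{x}^{\,\dag}\|_X+\rho$, with $c_u$ taken uniform.
\end{itemize}
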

\begin{proof}
Using the definition of $r$, cf. \eqref{eq:defr}, \eqref{eq:defKr}, as well as the assumption \eqref{eq:reference:state:bounded:from:below} we directly estimate 
\begin{equation}\label{eq:est_r}
\begin{aligned}
&\|r(\vec{x}) - (\vec{x} - \vec{x}^{\,0})\|_{X}^2
= \sum_{j=1}^3\Bigl[\|\frac{1}{\Delta u^0_j}(s_j-s^0_j)(\Delta u_j-\Delta u^0_j)\|_{L^2(0,T_j;L^2(\Omega))}^2\\
&\quad+\|\frac{1}{(u^0_j)^2} (\eta^0_j(u_j-u^0_j)^2 + (\eta_j - \eta^0_j)(u_j^2-(u^0_j)^2) - (b_j - b^0_j)(u_j - u^0_j))\|_{L^2(0,T_j;L^2(\Omega))}^2 \Bigr]\\
&\leq \sum_{j=1}^3\Bigl[\frac{1}{c_u^2}\|s_j-s^0_j\|_{L^2(0,T_j;L^2(\Omega))}^2
\|\Delta u_j-\Delta u^0_j\|_{L^\infty(0,T_j;L^\infty(\Omega))}^2
\\&\quad+\frac{1}{c_u^4}\Bigl(\|\eta^0_j\|_{L^2(0,T_j;L^2(\Omega))}
\|u_j-u^0_j\|_{L^\infty(0,T_j;L^\infty(\Omega))}^2
\\&\quad\qquad+\|\eta_j - \eta^0_j\|_{L^2(0,T_j;L^2(\Omega))}
\|u_j+u^0_j\|_{L^\infty(0,T_j;L^\infty(\Omega))}
\|u_j-u^0_j\|_{L^\infty(0,T_j;L^\infty(\Omega))}
\\&\quad\qquad+\|b_j - b^0_j\|_{L^2(0,T_j;L^2(\Omega))}
\|u_j-u^0_j\|_{L^\infty(0,T_j;L^\infty(\Omega))}^2\Bigr)^2 \Bigr]
\end{aligned}
\end{equation}
hence, using $X_{u_j} \hookrightarrow L^{\infty}(0,T_j;W^{2,\infty}(\Omega))$ with embedding constant $C_{u_j}$,
\[
\begin{aligned}
&\|r(\vec{x}) - (\vec{x} - \vec{x}^{\,0})\|_{X}
\leq \left[ \sum_{j=1}^3\left(\frac{C_{u_j}^2}{c_u^2} \rho^2 
+3\frac{C_{u_j}^2}{c_u^4} (\|\vec{x}^{\,0}\|_X+\rho)\rho\right)^2 \right]^{\frac{1}{2}}
\|\vec{x} - \vec{x}^{\,0}\|_{X}
\end{aligned}
\]
for $\vec{x} \in B_\rho(\vec{x}^{\,0})$.
By choosing $\rho>0$ small enough, we obtain \eqref{eq:rid} for some $c_\rho<1$.

Similarly, for any $h\in X$ we estimate
\begin{equation}\label{eq:est_rprime}
\begin{aligned}
&\|r'(\vec{x})h - h\|_{X}^2
=\sum_{j=1}^3\Bigl[\|\frac{1}{\Delta u^0_j}(h_{s_j}(\Delta u_j-\Delta u^0_j)+(s_j-s^0_j)\Delta h_{u_j})\|_{L^2(0,T_j;L^2(\Omega))}^2\\
&\quad+\|\frac{1}{(u^0_j)^2} (
2\eta^0_j(u_j-u^0_j)h_{u_j} 
+2(\eta_j - \eta^0_j)h_{u_j} 
+h_{\eta_j}(u_j^2-(u^0_j)^2) 
\\ & \qquad \qquad
- (b_j - b^0_j)h_{u_j} 
- h_{b_j}(u_j - u^0_j))
\|_{L^2(0,T_j;L^2(\Omega))}^2 \Bigr] \\
&\leq \sum_{j=1}^3 \Bigl[\frac{1}{c_u^2}\Bigl(
\|h_{s_j}\|_{L^2(0,T_j;L^2(\Omega))}
\|\Delta u_j-\Delta u^0_j\|_{L^\infty(0,T_j;L^\infty(\Omega))} 
\\ &\qquad\qquad
+\|s_j-s^0_j\|_{L^2(0,T_j;L^2(\Omega))}
\|\Delta h_{u_j}\|_{L^\infty(0,T_j;L^\infty(\Omega))}
\Bigr)^2
\\&\quad+\frac{1}{c_u^4}\Bigl(2\|\eta^0_j\|_{L^2(0,T_j;L^2(\Omega))}
\|u_j-u^0_j\|_{L^\infty(0,T_j;L^\infty(\Omega))}
\|h_{u_j}\|_{L^\infty(0,T_j;L^\infty(\Omega))}
\\&\qquad\qquad
+2\|\eta_j - \eta^0_j\|_{L^2(0,T_j;L^2(\Omega))}
\|h_{u_j}\|_{L^\infty(0,T_j;L^\infty(\Omega))}
\\ &\qquad\qquad
+\|h_{\eta_j}\|_{L^2(0,T_j;L^2(\Omega))}
\|u_j^2-(u^0_j)^2\|_{L^\infty(0,T_j;L^\infty(\Omega))}
\\&\qquad\qquad
+\|b_j - b^0_j\|_{L^2(0,T_j;L^2(\Omega))}
\|h_{u_j}\|_{L^\infty(0,T_j;L^\infty(\Omega))}
\\& \qquad\qquad
+\|h_{b_j}\|_{L^2(0,T_j;L^2(\Omega))}
\|u_j-u^0_j\|_{L^\infty(0,T_j;L^\infty(\Omega))}^2\Bigr)^2 \Bigr]
\end{aligned}
\end{equation}
and conclude that 
\[
\|r'(\vec{x}) - \text{id}\|_{L(X)}\leq c_\rho
\]
with $c_\rho<1$, provided $\vec{x} \in B_\rho(\vec{x}^{\,0})$ with $\rho>0$ small enough.
This implies that $r'(\vec{x})$ is an isomorphism.
To obtain (Lipschitz) continuity of  $r'$, we consider
\[
\begin{aligned}
\|r'(\vec{x})h - r'(\tilde{\vec{x}})h\|_{X}^2
=&\sum_{j=1}^3 \Bigl[\|\frac{1}{\Delta u^0_j}(h_{s_j}(\Delta u_j-\Delta \tilde{u}_j)+(s_j-\tilde{s}_j)\Delta h_{u_j})\|_{L^2(0,T_j;L^2(\Omega))}^2\\
&+\|\frac{1}{(u^0_j)^2} (
2\eta^0_j(u_j-\tilde{u}_j)h_{u_j} 
+2(\eta_j - \tilde{\eta}_j)h_{u_j} 
+h_{\eta_j}(u_j^2-\tilde{u}_j)^2) 
\\ & \quad\quad
- (b_j - \tilde{b}_j)h_{u_j}
- h_{b_j}(u_j - \tilde{u}_j))
\|_{L^2(0,T_j;L^2(\Omega))}^2\Bigr]
\end{aligned}
\]
and estimate it analogously to \eqref{eq:est_r}, \eqref{eq:est_rprime}, which yields
\[
\|r'(\vec{x}) - r'(\tilde{\vec{x}})\|_{L(X)}\leq C_\rho
\|\vec{x} - \tilde{\vec{x}}\|_{X}
\]
with some constant $C_\rho$ that only depends on $c_u$, $\rho$, $C_{u_j}$, and $\|\vec{x}^{\,0}\|_X$.

Local injectivity of $r$ follows from the Inverse Function Theorem.
\end{proof}
We are now able to state our final convergence result for the iterates defined in~\eqref{identification:iterates:regularized:formulation}.

\begin{theorem}
    Let the conditions of Theorem~\ref{thm:linearised:uniqueness} on the observation set $\Sigma$ and the reference states $u^0_j$, $j \in \{1,2,3\}$ be satisfied and assume that the latter additionally satisfy ~\eqref{eq:reference:state:bounded:from:below}.  Let $\vxz \in B_\rho(\vxd)$, $\rho > 0$ sufficiently small and let the stopping index $n^*(\delta)$ be chosen as in~\eqref{eq:stopping:index}. Then the iterates $(\vec{x}_n^\delta)_{1\leq n \leq n^*(\delta)}$ in~\eqref{identification:iterates:regularized:formulation} are well defined, stay in $B_\rho(\vxz)$, and converge to $\vxd$ in $X$, that is, $\|\vec{x}_{n^*(\delta)}^\delta - \vec{x}^{\,\dag}\|_{X} \rightarrow 0$ as $\delta \rightarrow 0$; in the noiseless case ($\delta=0$, $n^*(\delta) = \infty$) we have $\| \vec{x}_n - \vec{x}^{\,\dag}\|_X \rightarrow 0$ as $n \rightarrow \infty$.
\end{theorem}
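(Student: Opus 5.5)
The plan is to apply the abstract convergence theorem for frozen Newton methods under range invariance from \cite{Kaltenbacher23Convergence}. The work consists of checking that its hypotheses hold in the present all-at-once setting.

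\emph{Step 1: range invariance.} We must verify that $\vec{\mathbb{F}}(\vec{x}) - \vec{\mathbb{F}}(\vxz) = K\, r(\vec{x})$ for all $\vec{x}\in B_\rho(\vxz)$, with $K=\vec{\mathbb{F}}'(\vxz)$ and $r$ as in \eqref{eq:defKr}. This is an algebraic identity. Insert the increments \eqref{eq:defr} into the explicit formula for $\mathbb{F^\prime}_j^{mod}$ and expand:
\begin{itemize}
\item the terms $(b_j u_j-\eta_j u_j^2)_{tt}-(b_j^0u_j^0-\eta^0_j(u_j^0)^2)_{tt}$ reproduce $(b^0_j du_j - 2\eta^0_j u^0_j du_j - d\eta_j (u^0_j)^2 + db_j u^0_j)_{tt}$ by the choice of $d\eta_j$;
\item the term $-s_j\Delta u_j + s^0_j\Delta u^0_j$ equals $-s^0_j\Delta du_j - ds_j\Delta u^0_j$ by the choice of $ds_j$;
\item the boundary and observation parts are linear in $u_j$, so they match trivially.
\end{itemize}
Condition \eqref{eq:reference:state:bounded:from:below} guarantees that the divisions in \eqref{eq:defr} are legitimate. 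The embedding $X_{u_j}\hookrightarrow L^\infty(0,T_j;W^{2,\infty}(\Omega))$ guarantees that $r$ maps $X$ into $X$.

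\emph{Step 2: properties of $r$.} Lemma~\ref{lemma:r:cont:diff} supplies these directly:
\begin{itemize}
\item continuous Fr\'echet differentiability of $r$;
\item bounded invertibility of $r'(\vxz)$;
\item the contraction-type estimate \eqref{eq:rid} with $c_\rho<1$;
\item local injectivity of $r$ on $B_\rho(\vxz)$.
\end{itemize}
These are exactly the assumptions on the nonlinear "well-posed" component in the abstract framework.

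\emph{Step 3: injectivity of the combined linear operator.} We need that $(K,\vec{P})$ is injective, or at least that its nullspace is trivial on the relevant set. This is the step I expect to be the main obstacle, because Theorem~\ref{thm:linearised:uniqueness} is stated for time-independent parameter increments, whereas $K$ acts on the lifted, time-dependent copies $(\lts_j,\ltb_j,\lte_j)$.

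I would argue as follows. Take an element with $K\hat r=0$ and $\vec{P}\hat r=0$. The condition $\vec{P}\hat r=0$ forces each lifted parameter to be constant in time. It remains to show that the three copies, indexed by $j$, coincide; I would build this into the parametrization by identifying a single spatial triple $(s,b,\eta)$ across $j$. Theorem~\ref{thm:linearised:uniqueness} then gives $\hat r=0$.

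To reach the form needed by the abstract theorem, namely $\vxd$ being the unique solution in $B_\rho(\vxz)$, combine this with range invariance and injectivity of $r$. If $\vec{\mathbb{F}}(\vec{x})=\vec{\mathbb{F}}(\vxd)$, then $K(r(\vec{x})-r(\vxd))=0$. Together with the penalty, this forces $r(\vec{x})=r(\vxd)$, and hence $\vec{x}=\vxd$.

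\emph{Step 4: convergence.} With Steps 1–3 in hand, invoke the convergence theorem of \cite{Kaltenbacher23Convergence} for the iterates \eqref{identification:iterates:regularized:formulation}. Well-definedness follows from convexity, weak lower semicontinuity and coercivity of $J_n$ on the weakly closed ball $\mathcal{B}_\rho(\vxz)$. The iterates remain in $B_\rho(\vxz)$ via an induction using \eqref{eq:rid}: the error recursion contracts with factor $c_\rho$ and picks up noise contributions of size $\delta\alpha_n^{-1/2}$. Summing the recursion yields exactly the quantity in \eqref{eq:stopping:index}, which gives convergence as $\delta\to0$. In the noiseless case, $\alpha_n\to0$ together with injectivity gives $\vec{x}_n\to\vxd$.
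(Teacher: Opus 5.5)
Your route is the paper's: range invariance $\vec{\mathbb{F}}(\vx)-\vec{\mathbb{F}}(\vxz)=K\,r(\vx)$, the properties of $r$ from Lemma~\ref{lemma:r:cont:diff}, linearized uniqueness, and then Theorem~2.2 of \cite{Kaltenbacher23Convergence}. The gap is in Step~4, where you let \eqref{eq:rid} drive the error recursion; that estimate is centered at the wrong point. By range invariance, $\vec{\mathbb{F}}(\vx_n)-\vec{\mathbb{F}}(\vxd)=K(r(\vx_n)-r(\vxd))$. So, up to the noise term, the linearized residual in $J_n$ is
\[
K(\vx-\vx_n)+\vec{\mathbb{F}}(\vx_n)-\vec{\mathbb{F}}(\vxd)
=K\bigl[(\vx-\vxd)-w_n\bigr],\qquad
w_n:=(\vx_n-\vxd)-\bigl(r(\vx_n)-r(\vxd)\bigr),
\]
and $\vx_{n+1}$ is a regularized reconstruction of $\vxd+w_n$. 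To make the error contract towards $\vxd$ you need
\[
\|r(\vx)-r(\vxd)-(\vx-\vxd)\|_X\le\theta\|\vx-\vxd\|_X,\qquad \theta\in(0,1),
\]
which is exactly what the paper's proof singles out as the ``crucial last ingredient''. Estimate \eqref{eq:rid} is centered at $\vxz$. Applying it at $\vx_n$ and at $\vxd$ only gives $\|w_n\|_X\le c_\rho(\|\vx_n-\vxz\|_X+\|\vxd-\vxz\|_X)$, which does not vanish as $\vx_n\to\vxd$. Your induction could therefore at best keep the iterates bounded; it does not yield convergence to $\vxd$. The missing estimate is easy to supply. The proof of the Lemma shows $\|r'(\vx)-\mathrm{id}\|_{L(X)}\le c_\rho$ on the convex ball $B_\rho(\vxz)$, and this ball contains $\vxd$ because $\|\vxz-\vxd\|_X<\rho$. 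The mean value inequality then gives the estimate with $\theta=c_\rho$. The paper instead splits off the Taylor remainder of $r$ at $\vxd$ and uses Lipschitz continuity of $r'$ together with $r'(\vxz)=\mathrm{id}$.

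Step~3 is also not right as written, although the paper offers nothing better here. It simply takes over the hypotheses of Theorem~\ref{thm:linearised:uniqueness} without discussing the lifted copies, so your concern is legitimate. However, $\vec P$ acts on the lifted parameters $(\tilde{s},\tilde{b},\tilde{\eta})$, not on $\hat r=r(\vx)$. Moreover, $r$ reintroduces time dependence. Even for time-constant $\tilde{s}_j$, the increment $ds_j=(\tilde{s}_j-s^0_j)\Delta u_j/\Delta u^0_j$ depends on $t$, and likewise $d\eta_j$. So your claim that the penalty ``forces $r(\vx)=r(\vxd)$'' is unjustified. Tying the three copies together across $j$ also changes the paper's $\vec P$, which acts on each $j$ separately. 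Once injectivity of the linear part is available, uniqueness of $\vxd$ in the ball should instead come from the $\theta$-estimate: $r(\vx)=r(\vxd)$ gives $\|\vx-\vxd\|_X\le\theta\|\vx-\vxd\|_X$, hence $\vx=\vxd$.
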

\begin{proof}
    Lemma~\ref{lemma:r:cont:diff} already provides two key ingredients to guarantee convergence of the iterates defined in~\eqref{identification:iterates:regularized:formulation} in terms of~\cite{Kaltenbacher23Convergence} (see Theorem 2.2, injectivity of $r$ and  $r^\prime(\vxz)$ being boundedly invertible). The crucial last ingredient to conclude convergence follows from the estimate
    \begin{align}
    \nonumber
        \| r(\vx) - r(\vxd) - (\vx - \vxd)\|_X \leq & \| r(\vx) - r(\vxd) - r^\prime(\vxd) (\vx - \vxd)\|_X  \\ \nonumber
        &\qquad\qquad\qquad + \|r^\prime(\vx) - r^\prime(\vxz)\|_{L(X)}\|\vx - \vxd\|_X,
    \end{align}
    where we have used Lemma~\ref{lemma:r:cont:diff} to obtain existence of $\theta \in (0,1)$ such that
    \begin{align}
    \nonumber
        \| r(\vx) - r(\vxd) - (x - \vxd)\|_X \leq \theta \|\vx - \vxd\|_X,
    \end{align}
    on a sufficiently small neighborhood of $\vxd$ containing $\vxz$.
\end{proof}

%% file: supplemental_material.tex
\section{Proof of Theorem~\ref{thm:two:sources:westervelt:linear:solution:existence:uniqueness:mixed:boundary:space:dependent}}
\label{appendix:proof:theorem:linear:well-posedness}
%
First, we derive a weak formulation 
of~\eqref{eq:two:sources:westervelt:linear:periodic:space:dependent}.
To this end, we augment the linearized Westervelt equation by $\nu \alpha u_t$, where we impose $\frac{\gamma}{\beta} > \nu > 0$ a.e. as assumed, which is justified by the boundedness from below of $\gamma$ and $\frac{1}{\beta}$ on $\partial \Omega$, which then reads
\begin{equation}
    \label{eq:two:sources:linear:westervelt:augmented:space:dependent}
    \alpha \left(u_{tt} + \nu u_t \right) -c^2 \Delta u - \mathfrak{b} \Delta u_t - \left(\nu\alpha - \mu \right) u_t + \delta u = f. 
\end{equation}
Integrating from $0$ to $T$ w.r.t. $t$ yields the compatibility condition
\begin{equation}
    \label{proof:linear:well:posedness:compatibility:condition}
    \int_0^T \g dt = \int_0^T \beta u_t + \gamma u + \nabla u \cdot \vecn \,dt = \int_0^T \gamma u + \nabla u \cdot \vecn \,dt,
\end{equation}
where we used the periodicity of $u_t$. Differentiating the boundary conditions w.r.t. $t$ yields
\begin{equation}
    \nonumber
    \beta u_{tt} + \gamma u_t + \nabla u_t \cdot \vecn =\g_t.
\end{equation}
Testing the first term of~\eqref{eq:two:sources:linear:westervelt:augmented:space:dependent} by $-\frac{1}{\alpha} \Delta v$ gives
\[
\begin{aligned}
    &\int_0^T \int_\Omega \left(\alpha u_{tt} + \nu\alpha u_t \right) (-\tfrac{1}{\alpha} \Delta v)\,dx\,dt = \\ \nonumber 
    &- \Bigl(\int_0^T \int_{\partial\Omega} (u_{tt} + \nu u_t) \nabla v \cdot \vecn\, dS(x) + \int_\Omega \nabla u_t \cdot \nabla v_t \,dx - \int_\Omega \nu\nabla u_t \cdot \nabla v\,dx\,dt  \\ \nonumber
    &\qquad -\left[\int_\Omega \nabla u_t \cdot \nabla v\,dx \right]_0^T \Bigr) \\ \nonumber
    & = \int_0^T \int_{\partial\Omega} \frac{1}{\beta}(\nabla u_t \cdot \vecn) (\nabla v \cdot \vecn)\, dS(x) - \int_\Omega \nabla u_t \cdot \nabla v_t \,dx + \int_\Omega \nu\nabla u_t \cdot \nabla v\,dx \\ \nonumber
    & +\int_{\partial\Omega} \left(\frac{\gamma}{\beta} - \nu\right) u_t (\nabla v \cdot \vecn)\, dS(x) - \int_{\partial\Omega} \frac{1}{\beta} \g_t (\nabla v \cdot \vecn)dS(x)\,dt + \left[\int_\Omega \nabla u_t \cdot \nabla v\,dx \right]_0^T.
\end{aligned}
\]
By this, we obtain the following variational form
\[
\begin{aligned}
\label{eq:two:sources:linear:westervelt:weak:form:boundary:source:space:dependent}
    &u \in U,\, u(0) = u(T),\, u_t(0) = u_t(T),\,h(0) = h(T),\\ 
    &\forall v \in H^1(0,T;H^2(\Omega)) \cap L^2(0,T;H^2(\Omega)),\,\zeta \in L^2(0,T;L^2(\partial \Omega))\\
    &\int_0^T \int_\Omega - \nabla u_t \cdot \nabla v_t + \nu\nabla u_t \cdot \nabla v + \frac{1}{\alpha}\left(c^2 \Delta u + \mathfrak{b} \Delta u_t + \left(\nu\alpha - \mu\right)u_t - \delta u\right) \Delta v \,dx\,dt  \\ \nonumber 
     & + \int_0^T \int_{\partial \Omega} \frac{1}{\beta} \left(\nabla u_t \cdot \vecn \right) \left(\nabla v \cdot \vecn\right) \,dS(x) + \int_{\partial \Omega} \left[h_t - \left(\beta u_t + \gamma u + \nabla u \cdot \vecn - \g \right) \right]  \zeta\,dS(x)\,dt \\ \nonumber
     & + \int_0^T\int_{\partial\Omega} \left(\frac{\gamma}{\beta} - \nu \right) u_t (\nabla v \cdot \vecn) \,dS(x)\,dt + \left[\int_\Omega \nabla u_t \cdot \nabla v \,dx\right]_0^T \\ \nonumber 
     &= \int_0^T \int_{\partial \Omega} \frac{1}{\beta} \g_t (\nabla v \cdot \vecn) \,dS(x)\,dt - \int_0^T \int_\Omega \frac{1}{\alpha}f \Delta v\, dx\, dt,
\end{aligned}
\]
where we introduced the auxiliary function $h$ to conclude the boundary condition $\beta u_t + \gamma u + \nabla u \cdot \vecn = g$ from its time differentiated version. Indeed, reversing the integration by parts step yields
\[
\begin{aligned}
    &u \in U,\, u(0) = u(T),\, u_t(0) = u_t(T), \int_0^T \gamma u + \nabla u \cdot \vecn \, dt = \int_0^T g \, dt \\
    &\text{and for all }v \in H^1(0,T;H^2(\Omega)) \cap L^2(0,T;H^2(\Omega)): \\
    & \int_0^T\int_\Omega  \left( \alpha\utt - c^2 \Delta u - \mathfrak{b} \Delta u_t + \mu u_t + \delta u - f \right) (-\tfrac{1}{\alpha} \Delta v) \,dx  \\ \nonumber
    & \qquad + \int_{\partial \Omega} \left(\beta \utt + \gamma \ut + \nabla u \cdot \vecn - \g_t\right)(\nabla v \cdot \vecn)\,dS(x)\,dt = 0.
\end{aligned}
\]

Second, we employ a Galerkin method. We consider the eigenvalue problem of the impedance Laplacian $-\Delta_\gamma$ given by
\begin{equation}
\label{eq:two:sources:laplace:eigenvalue:problem:space:dependent}
\begin{cases}
-\Delta \phi_k = \lambda_k \phi_k & \text{in } \Omega,\\
\gamma \phi_k + \nabla \phi_k \cdot \vecn = 0 & \text{on }  \partial \Omega.
\end{cases}   
\end{equation}
The eigenfunctions $\phi_k$ are orthonormal in $L^2(\Omega)$ and orthogonal in $H^1(\Omega)$. The eigenvalues of $-\Delta_\gamma$ fulfill $0 < \lambda_1 \leq \lambda_2 \leq \ldots \leq \lambda_k \rightarrow \infty$, $n \rightarrow \infty$. We further define $\eta_k := \text{tr}(\phi_k)$, which are dense in $L^2(\partial \Omega)$  by the trace theorem, and with $V_n := \text{lin}\left\{\phi_1, \ldots, \phi_n \right\} \times \text{lin}\{\eta_1, \ldots, \eta_n\}$, $\bigcup_{n\in \mathbb{N}} V_n$ is dense in $H^1(\Omega) \times L^2(\partial\Omega)$. Plugging the ansatz $(u_n(t,\cdot), h_n(t,\cdot)) := (\sum_{k=1}^n \mathfrak{a}_k(t)\phi_k, \sum_{k=1}^n \mathfrak{c}_k(t)\eta_k)$ into~\eqref{eq:two:sources:linear:westervelt:weak:form:boundary:source:space:dependent}, testing with $v = \phi_k$ and defining $\vec{z}_n(t) = (\mathfrak{a}_1(t), \ldots, \mathfrak{a}_n(t), \dot{\mathfrak{a}}_1(t), \ldots, \dot{\mathfrak{a}}_n(t),
\mathfrak{c}_1(t), \ldots, \mathfrak{c}_n(t))^T$, with periodicity conditions $\vec{z}_n(0)=\vec{z}_n(T)$, yields the following ODE system
\begin{equation}
    \label{eq:two:sources:westervelt:linear:boundary:galerkin:form:full:ODE:system:a:space:dependent}
\begin{aligned}
    &   
\begin{bmatrix} 
I_{n \times n} & 0_{n \times n} & 0_{n \times n} \\ 
0_{n \times n} & \mathbf{H} & 0_{n \times n}    \\
0_{n \times n} & 0_{n \times n}  & \mathbf{D}
\end{bmatrix}
\dot{\vec{z}}_n(t) =  
\begin{bmatrix} 
0_{n \times n} & I_{n \times n} & 0_{n \times n}\\
-\mathbf{C}    & -\mathbf{G} & 0_{n \times n}  \\
0_{n \times n} & \mathbf{D} & 0_{n \times n}
\end{bmatrix} 
\vec{z}_n(t)
+ \begin{bmatrix}
        0 \\ \mathbf{F} \\ -\mathbf{K}
    \end{bmatrix}
\end{aligned}
\end{equation}
Here
\begin{align*}
\mathbf{F}_j =& \left(\tfrac{1}{\beta}\g_t, \nabla \phi_j \cdot \vecn \right)_{L^2(\partial \Omega)} -\left(\tfrac{1}{\alpha} f,\Delta\phi_j\right)_{L^2(\Omega)}, \\
\mathbf{K}_j =& \left(\g, \eta_j \right)_{L^2(\partial \Omega)}, \\
\mathbf{H}_{i,j} =& \left(\nabla \phi_i, \nabla \phi_j \right)_{L^2(\Omega)}, \\
\mathbf{C}_{i,j} =& \left(\tfrac{1}{\alpha}c^2 \Delta \phi_i, \Delta \phi_j \right)_{L^2(\Omega)} - \left(\tfrac{\delta}{\alpha} \phi_i, \Delta \phi_j \right)_{L^2(\Omega)}, \\
\mathbf{G}_{i,j} =& 
\left(\nu \nabla \phi_i, \nabla \phi_j \right)_{L^2(\Omega)} 
+\left(\tfrac{1}{\alpha}(\mathfrak{b} \Delta \phi_i+(\nu\alpha-\mu)\phi_i), \Delta \phi_j \right)_{L^2(\Omega)}
\\&
+ \left( \tfrac{1}{\beta} \trace{\Omega}{(\nabla \phi_i\cdot\vecn)} + (\tfrac{\gamma}{\beta} - \nu) \phi_i, \trace{\Omega}{(\nabla \phi_j\cdot\vecn) } \right)_{L^2(\partial\Omega)},
\\
\mathbf{D}_{i,j} =& \left( \eta_i, \eta_j \right)_{L^2(\partial\Omega)}. 
\end{align*}
The matrices $\mathbf{H}$, $\mathbf{D}$ are positive definite, hence regular and 
\begin{equation}
    \label{eq:two:sources:westervelt:linear:boundary:galerkin:form:full:ODE:system:b:space:dependent}
\begin{aligned}
    &   
\dot{\vec{z}}_n(t) =  
\begin{bmatrix} 
I_{n \times n} & 0_{n \times n} & 0_{n \times n} \\ 
0_{n \times n} & \mathbf{H} & 0_{n \times n}    \\
0_{n \times n} & 0_{n \times n}  & \mathbf{D}
\end{bmatrix}^{-1}
\left(
\begin{bmatrix} 
0_{n \times n} & I_{n \times n} & 0_{n \times n}\\
-\mathbf{C}    & -\mathbf{G} & 0_{n \times n}  \\
0_{n \times n} & \mathbf{D} & 0_{n \times n}
\end{bmatrix} 
\vec{z}_n(t)
+ \begin{bmatrix}
        0 \\ \mathbf{F} \\ \mathbf{-K}
    \end{bmatrix}
\right)\\
    &= \Tilde{\mathbf{A}}(t) \vec{z}_n(t) + \Tilde{\mathbf{F}}(t),
\end{aligned}
\end{equation}
where $\Tilde{\mathbf{A}}(0) = \Tilde{\mathbf{A}}(T)$ due to the periodicity of $\alpha,\mu$ and $\delta$. It is readily checked that the conditions for the Floquet-Lyapunov Theorem (see~\cite{linODEperiodic}, page 90) are fulfilled and, thus, we obtain the existence of a $T$-periodic solution in $C^2(0,T;V_n)$ to~\eqref{eq:two:sources:westervelt:linear:boundary:galerkin:form:full:ODE:system:b:space:dependent}. 

Third, energy estimates. We test the spatial discretized version of~\eqref{eq:two:sources:linear:westervelt:weak:form:boundary:source:space:dependent} with $v =  \tfrac{\nu}{2} \un + \unt $, $\zeta = h_{n_t} - (\beta\unt + \gamma \un + \nabla \un \cdot \vecn - \g)$ and with the identities
\begin{align*}
    \frac{1}{2} \frac{d}{dt} \left( \nabla \unt , \nabla \unt \right)_{L^2(\Omega)} &= \left(\nabla \untt, \nabla \unt\right)_{L^2(\Omega)} + \left(\nabla \unt, \nabla \unt \right)_{L^2(\Omega)} \\
    \frac{1}{2} \frac{d}{dt} \| \nabla \un \|_{L^2(\Omega)}^2 &= \left(\nabla \un, \nabla\unt \right)_{L^2(\Omega)} \\
    \frac{1}{2} \frac{d}{dt} \| \sqrt{\tfrac{c^2}{\alpha}} \Delta \un \|_{L^2(\Omega)}^2 &=  \left(\left(\tfrac{c^2}{2\alpha}\right)_t \Delta \un, \Delta \un \right)_{L^2(\Omega)} + \left(\tfrac{c^2}{\alpha} \Delta \unt, \Delta \un \right)_{L^2(\Omega)} \\
    \frac{1}{2} \frac{d}{dt} \| \sqrt{\tfrac{\mathfrak{b}}{\alpha}} \Delta \un \|_{L^2(\Omega)}^2 &= \left(\tfrac{\mathfrak{b}}{\alpha} \Delta \unt, \Delta \un \right)_{L^2(\Omega)}  + \left(\left(\tfrac{\mathfrak{b}}{2\alpha}\right)_t \Delta \un, \Delta \un\right)_{L^2(\Omega)} \\
    \frac{1}{2} \frac{d}{dt} \| \nabla \un \cdot \vecn \|_{L^2(\partial\Omega)}^2 &= \left( \nabla \unt \cdot \vecn,  \nabla \un \cdot \vecn \right)_{L^2(\Omega)} \\
    \frac{d}{dt} \left( \g, \nabla \un \cdot \vecn \right)_{L^2(\partial \Omega)} &= \left( \g_t, \nabla \un \cdot \vecn \right)_{L^2(\partial \Omega)} + \left(\g, \nabla \unt \cdot \vecn \right)_{L^2(\partial \Omega)}
\end{align*}
one obtains due to the periodicity of $\vec{z}_n$ (that is that of $\un$ and $\unt$) and the fact that $-\gamma \unt =\nabla \unt \cdot \vecn$ on $\partial \Omega$ (due to our ansatz setting)
\begin{align}
    & \text{lhs} := \tfrac{\nu}{2} \|\nabla \unt \|_{L^2(0,T;L^2(\Omega))}^2 + \tfrac{\nu}{2} \| \sqrt{\tfrac{c^2}{\alpha}} \Delta \un \|_{L^2(0,T;L^2(\Omega))}^2 + \| \sqrt{\tfrac{\mathfrak{b}}{\alpha}} \Delta \unt \|_{L^2(0,T;L^2(\Omega))} \\ \nonumber
    &+ \| \sqrt{\nu\gamma} \unt \|_{L^2(0,T;L^2(\partial \Omega))}^2 + \| h_{n\,t} - (\beta \unt + \gamma \un + \nabla \un \cdot \vecn - \g) \|_{L^2(0,T;L^2(\partial \Omega))}^2  = \\ \nonumber
    &  \int_0^T \int_{\partial \Omega} \frac{\g_t}{\beta} \, \nabla\left(\tfrac{\nu}{2} \un + \unt \right) \cdot \vecn \,dS(x)\,dt + \int_0^T\int_\Omega \left(\tfrac{1}{2\alpha} \right)_t\left(c^2 + \tfrac{\nu}{2} \mathfrak{b} \right) (\Delta \un)^2\,dx\,dt \\ \nonumber
    &- \int_0^T \int_\Omega \left(\tfrac{f}{\alpha} - \tfrac{\delta}{\alpha} u_n + (\nu  - \tfrac{\mu}{\alpha}) \unt\right) \left(\Delta \unt + \tfrac{\nu}{2} \Delta \un \right)\, dx\, dt =: \text{rhs}.
\end{align}
The right hand side can be further simplified using the above identities arriving at
\begin{align}
    \text{rhs} &=  \int_0^T \int_{\partial\Omega}  \tfrac{1}{\beta}\left(\g_t - \tfrac{\nu}{2} \g \right) \nabla \unt \cdot \vecn \, dS(x)\,dt + \int_0^T \int_\Omega \left(\tfrac{1}{2\alpha} \right)_t \left(c^2 + \tfrac{\nu}{2}b \right) (\Delta \un)^2 \,dx\,dt \\ \nonumber
    &- \int_0^T \int_\Omega \tfrac{1}{\alpha}\left( f - \delta \un + (\nu\alpha  - \mu) \unt\right) \left(\Delta \unt + \tfrac{\nu}{2} \Delta \un \right)\, dx\, dt.
\end{align}
Using H\"older's inequality, including the case $q\in[1,\infty)$ with $\tfrac{1}{2q} + \tfrac{q-1}{2q} = \tfrac{1}{2}$, Young's inequality and considering our ansatz space, there exists a $C_a > 0$ such that we arrive at
\begin{align}
    |\text{rhs}| & \leq \|\tfrac{1}{\beta}\|_{L^\infty(\partial\Omega)}\left( (1+\tfrac{\nu^2}{4}) \varepsilon_1  \|\g\|_{H^1(0,T;L^2(\partial \Omega))}^2 + \ \tfrac{1}{2 \varepsilon_1}  \|\nabla\unt \cdot \vecn \|_{L^2(0,T;L^2(\partial \Omega))}^2 \right) \\ \nonumber
    &+ (\| \left(\tfrac{1}{2\alpha} \right)_t\left(c^2 + \tfrac{\nu}{2} \mathfrak{b} \right)\|_{L^\infty(0,T;L^\infty(\Omega))}^2 + \tfrac{C_a}{2\varepsilon_4}\|\tfrac{\delta}{\alpha}\|_{L^\infty(0,T;L^\infty(\Omega)}^2)\|\Delta \un\|_{L^2(0,T;L^2(\Omega))}^2 \\ \nonumber
    &+\tfrac{\varepsilon_2 + \varepsilon_3 + \varepsilon_4}{2}\|\Delta u_{n\,t} + \tfrac{\nu}{2}\Delta u_n\|_{L^2(0,T;L^2(\Omega))}^2  \\ \nonumber
& + \tfrac{\| \nu  - \tfrac{\mu}{\alpha}\|^2_{L^\infty(0,T;L^{2q/(q-1)}(\Omega))}}{2\varepsilon_2} \|\unt\|_{L^2(0,T;L^{2q}(\Omega))}^2 + \tfrac{1}{2\varepsilon_3} \|\tfrac{f}{\alpha}\|_{L^2(0,T;L^2(\Omega))}^2.
\end{align}
Exploiting the embedding $H^{3/2}(\Omega) \subseteq L^{2q}(\Omega)$ for $q \in [1,\infty)$ we get
\begin{align}
    |\text{rhs}| & \leq \|\tfrac{1}{\beta}\|_{L^\infty(\partial\Omega)} \left( (1+\tfrac{\nu^2}{4})  \varepsilon_1  \|\g\|_{H^1(0,T;L^2(\partial \Omega))}^2 +  \tfrac{1}{2 \varepsilon_1}  \|\nabla\unt \cdot \vecn \|_{L^2(0,T;L^2(\partial \Omega))}^2 \right)  \\ \nonumber
    &+(\| \left(\tfrac{1}{2\alpha} \right)_t\left(c^2 + \tfrac{\nu}{2} \mathfrak{b} \right)\|_{L^\infty(0,T;L^\infty(\Omega))}^2 + \tfrac{C_a}{2\varepsilon_4}\|\tfrac{\delta}{\alpha}\|_{L^\infty(0,T;L^\infty(\Omega)}^2) \|\Delta \un \|_{L^2(0,T;L^2(\Omega))}^2 \\ \nonumber
    &+(\varepsilon_2 + \varepsilon_3 + \varepsilon_4) (\|\Delta u_{n\,t}\|_{L^2(0,T;L^2(\Omega)}^2 + \tfrac{\nu}{2}\|\Delta u_n\|_{L^2(0,T;L^2(\Omega))}^2 ) \\ \nonumber
    &+ \tfrac{\| \nu - \tfrac{\mu}{\alpha}\|^2_{L^\infty(0,T;L^{2q/(q-1)}(\Omega))}}{2\varepsilon_2} C_{H^{3/2} \rightarrow L^{2q}} ^2\|\unt \|_{L^2(0,T;H^{3/2}(\Omega))}^2 \\ \nonumber
    & + \tfrac{\|\tfrac{1}{\alpha} \|_{L^\infty(\Omega)}^2}{2\varepsilon_3} \| f \|_{L^2(0,T;L^2(\Omega))}^2.
\end{align}
Due to the employed ansatz space and differentiating with respect to time, one finds that $\unt = -\frac{1}{\gamma} \nabla \unt \cdot \vecn$ on $\partial \Omega$. Together with the imposed bounds on the Robin parameters this admits the following estimate from below of the left hand side 
\begin{align}
    \text{lhs} \geq &\tfrac{\nu}{2} \|\nabla \unt \|_{L^2(0,T;L^2(\Omega))}^2 + \tfrac{\nu}{2} \| \sqrt{\tfrac{c^2}{\alpha}} \Delta \un \|_{L^2(0,T;L^2(\Omega))}^2 + \|\sqrt{\tfrac{\mathfrak{b}}{\alpha}} \Delta \unt \|_{L^2(0,T;L^2(\Omega))} \\ \nonumber
    & + \nu c_\gamma \| \unt \|_{L^2(0,T;L^2(\partial \Omega))}^2,
\end{align}
where we skipped the term in $h_{n_t}$ and $c_\gamma > 0$ denote the lower bound of $\gamma$. By elliptic regularity of the impedance Laplacian (exploiting the fact that $\partial \Omega \in C^{1,1}$) and due to our ansatz space setting we obtain the following estimate
\begin{align}
    \|\unt\|_{H^{3/2}(\Omega)} \leq \Comega (\|\Delta \unt\|_{L^2(\Omega)} + \| \unt \|_{L^2(\Omega)}),
\end{align}
where $\Comega > 0$ does only depend on $\Omega$. By the imposed bounds on the coefficients and by controlling them using $\varepsilon_i > 0$, $i \in \{1,2,3,4\}$ and $\tfrac{\gamma}{\beta} > \nu >0$ we obtain a $C>0$ such that the following estimate holds
\begin{align}
    &(1+\tfrac{\nu^2}{4})\|\tfrac{1}{\beta}\|_{L^\infty(\partial\Omega)} \varepsilon_1  \|\g\|_{H^1(0,T;L^2(\partial \Omega))}^2 + \tfrac{\|\tfrac{1}{\alpha} \|_{L^\infty(\Omega)}^2}{2\varepsilon_3} \| f \|_{L^2(0,T;L^2(\Omega))}^2  
    \\ \nonumber
    \geq 
    &\tfrac{\nu}{2}||\nabla \unt||_{L^2(0,T;L^2(\Omega))}^2 +  \left(\nu c_\gamma- \|\tfrac{1}{\beta} \|_{L^\infty(\partial \Omega)} \tfrac{1}{2\varepsilon_1}\right) || \unt ||_{L^2(0,T;L^2(\partial \Omega))}^2  \\ \nonumber
    & + \tfrac{\nu}{2} \left( \| \sqrt{\tfrac{c^2}{\alpha}} \Delta \un \|_{L^2(0,T;L^2(\Omega))}^2 - (\varepsilon_2 + \varepsilon_3 + \varepsilon_4)\| \Delta\un\|^2_{L^2(0,T;L^2(\Omega))} \right)\\ \nonumber
     & - (\| \left(\tfrac{1}{2\alpha} \right)_t\left(c^2 + \tfrac{\nu}{2} \mathfrak{b} \right)\|_{L^\infty(0,T;L^\infty(\Omega))}^2 + \tfrac{C_a}{2\varepsilon_4}\|\tfrac{\delta}{\alpha}\|_{L^\infty(0,T;L^\infty(\Omega))}^2) \|\Delta \un \|_{L^2(0,T;L^2(\Omega))}^2 \\ \nonumber
    & + \|\sqrt{\tfrac{\mathfrak{b}}{\alpha}}\Delta \unt \|_{L^2(0,T;L^2(\Omega))}^2 - (\varepsilon_2 + \varepsilon_3 + \varepsilon_4) \| \Delta \unt \|_{L^2(0,T;L^2(\Omega))}^2 \\ \nonumber
    & - \tfrac{\| \nu - \tfrac{\mu}{\alpha}\|^2_{L^\infty(0,T;L^{2q/(q-1)}(\Omega))}}{2\varepsilon_2} C_{H^{3/2} \rightarrow L^{2q}}^2 ||\unt||_{L^2(0,T;H^{3/2}(\Omega))}^2 \\ \nonumber
    \geq & \Bigl( \tfrac{ \min\{\tfrac{\nu}{2}, \nu c_\gamma - \|\tfrac{1}{\beta} \|_{L^\infty(\partial \Omega)} \tfrac{1}{2\varepsilon_1}, \Tilde{b}_\alpha - (\varepsilon_2 + \varepsilon_3 + \varepsilon_4)\}}{\CPF \Comega^2} \\ \nonumber
    & - \tfrac{\| \nu - \tfrac{\mu}{\alpha}\|^2_{L^\infty(0,T;L^{2q/(q-1)}(\Omega))}}{2\varepsilon_2} C_{H^{3/2} \rightarrow L^{2q}}^2 \Bigr) \| \unt\|_{L^2(0,T,H^{3/2}(\Omega))}^2 \\ \nonumber
    & + \Bigl( \tfrac{\nu\tilde{c}_\alpha}{2} - (\varepsilon_2 + \varepsilon_3 + \varepsilon_4) - \| \left(\tfrac{1}{2\alpha} \right)_t\left(c^2 + \tfrac{\nu}{2} b \right)\|_{L^\infty(0,T;L^\infty(\Omega))}^2 \\ \nonumber
    & - \tfrac{C_a}{2\varepsilon_4}\|\tfrac{\delta}{\alpha}\|_{L^\infty(0,T;L^\infty(\Omega)}^2 \Bigr) \| \Delta \un \|_{L^2(0,T,L^2(\Omega))}^2 \\ \nonumber
    \geq & \tfrac{1}{C} \left(\| \unt\|_{L^2(0,T,H^{3/2}(\Omega))}^2 + \| \Delta \un \|_{L^2(0,T,L^2(\Omega))}^2 \right),
\end{align}
where $\CPF > 0$ is the constant arising in the Poincaré-Friedrichs inequality, $\Tilde{c}_\alpha >0$ is determined by the fact that $\tfrac{c^2}{\alpha} > 0$ is bounded from below in an a.e. sense in $(0,T) \times \Omega$, and in the same way we determine $\tilde{b}_\alpha > 0$. Hence, we obtain 
\begin{align}
\label{eq:two:sources:proof:estimate:unt:laplaceun:space:dependent}
    \| \unt\|_{L^2(0,T,H^{3/2}(\Omega))}^2 + \| \Delta \un \|_{L^2(0,T,L^2(\Omega))}^2 \leq C (\|f\|_{L^2(0,T;L^2(\Omega))}^2 + \|\g \|_{H^1(0,T;L^2(\partial \Omega))}^2) .
\end{align}
Likewise we obtain an estimate on $h_{n\,t}$, since we have already established that
\begin{align}
    \| h_{n\,t} - (\beta \unt + \gamma \un + \nabla \un \cdot \vecn - \g) \|_{L^2(0,T;L^2(\partial \Omega))}^2 \leq C (\| & f\|_{L^2(0,T;L^2(\Omega))}^2 \\ \nonumber
    &+ \|\g \|_{H^1(0,T;L^2(\partial \Omega))}^2).
\end{align}
Due to our ansatz space $\gamma \un + \nabla \un \cdot \vecn = 0$ on $\partial \Omega$ and using~\eqref{eq:two:sources:proof:estimate:unt:laplaceun:space:dependent} yields an $C > 0$ such that
\begin{align}
    \| h_{n\,t}\|_{L^2(0,T;L^2(\partial \Omega))} \leq C (\|f\|_{L^2(0,T;L^2(\Omega))} + \|\g\|_{H^1(0,T;L^2(\partial \Omega))}).
\end{align}
In order to obtain an estimate on $\untt$ we test~\eqref{eq:two:sources:linear:westervelt:weak:form:boundary:source:space:dependent} with $v = (-\Delta_\gamma)^{-1} \untt$ and $\zeta = 0$. Reversing the integration by parts step in~\eqref{eq:two:sources:linear:westervelt:weak:form:boundary:source:space:dependent} yields
\begin{align}
 \int_{0}^T\int_\Omega \untt \untt + \tfrac{1}{\alpha} \left( - c^2 \Delta - \mathfrak{b} \Delta \unt + \mu \unt + \delta \un + f \right) \untt \,dx\\ + \int_{\partial \Omega} \tfrac{1}{\beta}(\beta \untt + \gamma \unt + \nabla \unt \cdot \vecn - \g_t) (\nabla v \cdot \vecn) \,dS(x)\,dt = 0.
\end{align}
By our ansatz space we have that $\nabla v \cdot \vecn = -\gamma v$, since $v = (-\Delta_\gamma )^{-1} \untt$ on $\partial \Omega$, hence we obtain
\begin{align}
\label{eq:two:sources:proof:untt:estimate:space:dependent}
     \| \untt \|_{L^2(0,T;L^2(\Omega))} ^2 = & \int_{0}^T \left(\tfrac{1}{\alpha}( c^2 \Delta \un + \mathfrak{b} \Delta \unt - \mu \unt - \delta \un - f ), \untt \right)_{L^2(\Omega)} \\ \nonumber
    & + \langle \tfrac{\gamma}{\beta} \left(\beta \untt + \gamma \unt + \nabla \unt \cdot \vecn - \g_t \right), v\rangle_{(H^{3/2}(\partial \Omega)^*, H^{3/2}(\partial\Omega))} \,dt
\end{align}
Now, by the trace theorem and elliptic regularity of the impedance Laplacian we have
\begin{align}
    \label{eq:two:sources:boundary:trace:ellipticity:estimate:space:dependent}
    \| v \|_{H^{3/2}(\partial \Omega)} \leq \Ctrace \| v \|_{H^2(\Omega)} \leq \Ctrace \Comega \| \Delta_\gamma v \|_{L^2(\Omega)} = \Ctrace \Comega \| \untt \|_{L^2(\Omega)}.
\end{align}
\eqref{eq:two:sources:proof:untt:estimate:space:dependent} together with~\eqref{eq:two:sources:boundary:trace:ellipticity:estimate:space:dependent},~\eqref{eq:two:sources:proof:estimate:unt:laplaceun:space:dependent} and the Cauchy-Schwarz inequality yields an $C > 0$ such that
\begin{align}
    \| \untt \|_{L^2(0,T;L^2(\Omega))} \leq C (\|f\|_{L^2(0,T;L^2(\Omega))} + \|\g \|_{H^1(0,T;L^2(\partial \Omega))}).
\end{align}
Again exploiting our ansatz space, the problem formulation for the impedance Laplacian and~\eqref{eq:two:sources:proof:estimate:unt:laplaceun:space:dependent} we obtain an $C>0$ such that
\begin{align}
    \| \un \|_{L^2(0,T;L^2(\Omega))} \leq  C (\|f\|_{L^2(0,T;L^2(\Omega))} + \|\g \|_{H^1(0,T;L^2(\partial \Omega))}),
\end{align}
%
which together with elliptic regularity for the impedance Laplacian yields a $\tilde{C} > 0$ such that
\begin{align}
    \| \un \|_{L^2(0,T;H^{3/2}(\Omega))} \leq \Tilde{C}(\|f\|_{L^2(0,T;L^2(\Omega))} + \|\g \|_{H^1(0,T;L^2(\partial \Omega))}).
\end{align}
Passing to the limit. The above estimates show that $(\un)_{n\in\mathbb{N}}$ is a bounded sequence in 
$\solspace$ as defined in \eqref{eq:defH}\footnote{where again we have used elliptic regularity for the impedance Laplacian}
which is a Hilbert space and, hence, reflexive by the Hahn-Banach theorem. The Eberlein-\v{S}mulian theorem now yields a weakly convergent subsequence $(u_{n_j})_{j \in \mathbb{N}}$, $u_{n_j} \rightharpoonup u, j \rightarrow \infty$, and by the weak lower semicontinuity of the norm on $\solspace$
we have
\begin{align}
    \nonumber
    \| u \|_{\solspace} \leq C (\|f\|_{L^2(0,T;L^2(\Omega))} + \|\g \|_{H^1(0,T;L^2(\partial \Omega))}),
\end{align}
and due to linearity of the problem, we obtain that $u$ solves the PDE in a weak sense. Note, up to now, we did not use the assumed higher spatial regularity of $\g$. 

Higher regularity of solutions. If $u$ is a (weak) solution to~\eqref{eq:two:sources:linear:westervelt:weak:form:boundary:source:space:dependent}, then $\gamma u + \nabla u \cdot \vecn = -\beta u_t + \g$ is the trace of a $H^1(\Omega)$ function, hence we have
\begin{align}
    \| \gamma u + \nabla u \cdot \vecn \|_{L^2(0,T; H^{1/2}(\partial \Omega))} = \| -\beta u_t + \g \|_{L^2(0,T; H^{1/2}(\partial \Omega))} \leq C ||\g||_{H^1(0,T;H^{1/2}(\partial \Omega))},
\end{align}
and therefore, elliptic regularity yields a  $C > 0$ such that the following estimates holds
\begin{align}
    \|u\|_{L^2(0,T,H^2(\Omega))} \leq C (\|f\|_{L^2(0,T;L^2(\Omega))} + \|\g \|_{H^1(0,T;H^{1/2}(\partial \Omega))}).
\end{align}
Together with the estimates above we conclude that there exists a $C>0$ independent of $u$ such that
\begin{align}
    \|u\|_{\solspace}
    \leq C (\|f\|_{L^2(0,T;L^2(\Omega))} + \|\g \|_{H^1(0,T;H^{1/2}(\partial \Omega))}).
\end{align}
\qed
\section{Proof of Theorem~\ref{thm:westervelt:nonlinear:solution:existence:uniqueness:source:boundary}}
\label{appendix:proof:theorem:nonlinear:well-posedness}
We define $\mathcal{F}: \solspace \rightarrow \solspace$ by the solution $u=\mathcal{F}(v)$ of
\begin{equation}
\label{rmk:eq:westervelt:nonlinear:periodic:fixed:point}
\begin{cases}
bu_{tt} - s \Delta u -  \Delta u_t = \eta (v^2)_{tt} + \hh & \text{in } (0,T) \times \Omega,\\
\beta u_t + \gamma u + \nabla u \cdot \vecn = \g & \text{on } (0,T) \times \partial\Omega, \\
u(0) = u(T), \, u_t(0) = u_t(T) & \text{in } \Omega, \\
\end{cases}   
\end{equation}
$v \in X$. 
We will show that $\mathcal{F}$, restricted to a suitable ball $B_r(0) \subseteq \solspace$, $r > 0$, is a contraction and since $\solspace$ is a Banach space we obtain uniqueness and existence of a solution by the Banach fixed point Theorem.

Now, for $r>0$ fixed (with its size yet to be determined), let $v \in B_r(0)$ be arbitrary
In order to apply~\cref{thm:two:sources:westervelt:linear:solution:existence:uniqueness:mixed:boundary:space:dependent} we have to check whether $f:= \eta (v^2)_{tt} + \hh$ (together with $\alpha = b$, $\mu = 0$ and $\delta = 0$) fulfills the assumptions of~\cref{thm:two:sources:westervelt:linear:solution:existence:uniqueness:mixed:boundary:space:dependent}. For the right hand side we obtain 
\begin{align}
    \nonumber
    & || \eta  (v^2)_{tt} + h||_{L^2(0,T;L^2(\Omega))}  \leq 2 || \eta ||_{L^\infty(\Omega)} || v_t^2 +  v_{tt} v||_{L^2(0,T;L^2(\Omega))} + \|h\|_{L^2(0,T;L^2(\Omega))}.
\end{align}
Exploiting the Sobolev embedding Theorem (see,~\cite{evans2010} and~\cite{FAN2001749}) and by (real) interpolation~\cite{Amann95} we have 
\begin{align}
\nonumber
    &H^{3/4}(0,T;H^{13/8}(\Omega)) =  \\ \nonumber
    &H^{3/4}(0,T;[H^2(\Omega), H^{3/2}(\Omega)]_{3/4}) = [L^2(0,T;H^2(\Omega)), H^1(0,T;H^{3/2}(\Omega))]_{3/4},
\end{align} 
and 
\begin{align}
    \nonumber
    &H^{1/4}(0,T;H^{9/8}(\Omega)) =  \\ \nonumber
    &H^{1/4}(0,T;[H^{3/2}(\Omega), L^{2}(\Omega)]_{1/4}) = [L^2(0,T;H^{3/2}(\Omega)), H^1(0,T;L^{2}(\Omega))]_{1/4}.
\end{align}
With this we obtain 
\begin{align}
\label{eq:bochner:L:infinity:estimate:H}
||v||_{L^\infty(0,T;L^\infty(\Omega))} &\leq C_{H^{13/8}(\Omega) \rightarrow L^\infty(\Omega)} ||v||_{L^\infty(0,T;H^{13/8}(\Omega))} \\ \nonumber
& \leq  C_{H^{13/8}(\Omega) \rightarrow L^\infty(\Omega)}  C_{H^{3/4}(0,T) \rightarrow L^\infty(0,T)}||v||_{H^{3/4}(0,T;H^{13/8}(\Omega))} \\ \nonumber
& \leq  C_{H^{13/8}(\Omega) \rightarrow L^\infty(\Omega)}  C_{H^{3/4}(0,T) \rightarrow L^\infty(0,T)} 
 \\ \nonumber
&\qquad \qquad \qquad \qquad \qquad \qquad  ||v||_{H^{1}(0,T;H^{3/2}(\Omega))}^{3/4} ||v||_{L^{2}(0,T;H^{2}(\Omega))}^{1/4} \\ \nonumber
& \leq  C_{H^{13/8}(\Omega) \rightarrow L^\infty(\Omega)}  C_{H^{3/4}(0,T) \rightarrow L^\infty(0,T)} ||v||_{\solspace} \\ \nonumber
&\leq C_{H^{13/8}(\Omega) \rightarrow L^\infty(\Omega)}  C_{H^{3/4}(0,T) \rightarrow L^\infty(0,T)} r,
\end{align}
which yields that $v \in L^\infty(0,T;L^\infty(\Omega))$. We further show that $v_t^2$ is in  $L^2(0,T;L^2(\Omega))$:
\begin{align}
    \label{eq:bochner:squared:estiamte}
    || v_t^2 ||_{L^2(0,T;L^2(\Omega))} & \leq || v_t||_{L^4(0,T;L^4(\Omega))}^2 \\ \nonumber
    & \leq  C_{H^{1/4}(0,T) \rightarrow L^4(0,T)}^2 C_{H^{9/8}(\Omega) \rightarrow L^4(\Omega)}^2 ||v_t||_{H^{1/4}(0,T;H^{9/8}(\Omega))}^2 \\ \nonumber
    & \leq C_{H^{1/4}(0,T) \rightarrow L^4(0,T)}^2 C_{H^{9/8}(\Omega) \rightarrow L^4(\Omega)}^2 \\ \nonumber
    & \qquad \qquad \qquad \qquad \qquad \qquad  \left(||v_t||_{H^1(0,T;L^2(\Omega))}^{1/4} ||v_t||_{L^2(0,T;H^{3/2}(\Omega))}^{3/4} \right)^2 \\ \nonumber
    & \leq C_{H^{1/4}(0,T) \rightarrow L^4(0,T)}^2 C_{H^{9/8}(\Omega) \rightarrow L^4(\Omega)}^2 ||v||_{\solspace}^2 \\ \nonumber
    & \leq  C_{H^{1/4}(0,T) \rightarrow L^4(0,T)}^2 C_{H^{9/8}(\Omega) \rightarrow L^4(\Omega)}^2 r^2.
\end{align}
Now we obtain that $f = \eta(v^2)_{tt} + \hh \in L^2(0,T;L^2(\Omega))$
\begin{align}
    \nonumber
    & 2 || \eta ||_{L^\infty(\Omega)} || v_t^2 +  v_{tt} v||_{L^2(0,T;L^2(\Omega))} + \|\hh\|_{L^2(0,T;L^2(\Omega))}\\ \nonumber
    & \leq \|\hh\|_{L^2(0,T;L^2(\Omega))} + 4 r^2 ||\eta||_{L^\infty(\Omega)} \Bigl(C_{H^{1/4}(0,T) \rightarrow L^4(0,T)}^2 C_{H^{9/8}(\Omega) \rightarrow L^4(\Omega)}^2 \\ \nonumber
    & \qquad \qquad \qquad \qquad\qquad\qquad\qquad \qquad + C_{H^{13/8}(\Omega) \rightarrow L^\infty(\Omega)}  C_{H^{3/4}(0,T) \rightarrow L^\infty(0,T)} \Bigr).
\end{align}
We ensure that we do not encounter degeneracy by imposing the following smallness condition on $r>0$ 
\begin{align}
        \nonumber
        ||\eta||_{L^\infty(\Omega)} C_{H^{13/8}(\Omega) \rightarrow L^\infty(\Omega)}  C_{H^{3/4}(0,T) \rightarrow L^\infty(0,T)} \, r < \frac{1}{2}.
\end{align}
We note that this condition is sufficient to prevent the case of degeneracy, since 
\begin{align}
    \nonumber
  &\inf_{{(t,x) \in (0,T) \times \Omega}} (1 - 2 \eta v) \geq 1 - \sup_{{(t,x) \in (0,T) \times \Omega}} 2 \eta v \\ \nonumber
  &\geq 1 - 2 ||\eta||_{L^\infty(\Omega)}  C_{H^{13/8}(\Omega) \rightarrow L^\infty(\Omega)}  C_{H^{3/4}(0,T) \rightarrow L^\infty(0,T)} r > 0.
\end{align}
We further readily check that our setting fulfills the conditions on the parameters, since $\tfrac{\mu}{\alpha} = 0$ a.e. in $(0,T) \times \Omega$ and since $\alpha$ is constant in time we also have $\alpha_t = 0$ a.e. in $(0,T) \times \Omega$. Hence, we choose $\nu > 0$ small enough such that the conditions of ~\cref{thm:two:sources:westervelt:linear:solution:existence:uniqueness:mixed:boundary:space:dependent} are fulfilled. Now, the application of~\cref{thm:two:sources:westervelt:linear:solution:existence:uniqueness:mixed:boundary:space:dependent} is justified, and we obtain a $C>0$ such that
\begin{equation}
    \nonumber
    ||\mathcal{F}(v)||_{\solspace} \leq C (||\eta (v^2)_{tt}||_{L^2(0,T;L^2(\Omega))} + \|\hh\|_{L^2(0,T;L^2(\Omega))} + \|\g\|_{H^1(0,T;H^{1/2}(\Omega))}).
\end{equation}
In order to obtain a self-mapping we further impose the following conditions on $r$ and $\delta$
\begin{align}
        \label{eq:westervelt:nonlinear:condition:3:modified}
         C(4||\eta||_{L^\infty(\Omega)}( &C_{H^{1/4}(0,T) \rightarrow L^4(0,T)}^2 C_{H^{9/8}(\Omega) \rightarrow L^4(\Omega)}^2 \\ \nonumber
         &+  C_{H^{13/8}(\Omega) \rightarrow L^\infty(\Omega)}  C_{H^{3/4}(0,T) \rightarrow L^\infty(0,T)}) r^2 + \Lambda ) < r,
\end{align}
which can be achieved by choosing $\Lambda>0$ and $r>0$ sufficiently small. 

To show contractivity of $\mathcal{F}$, for any two $v_1, v_2 \in B_r(0)$ set $w = u_1 - u_2 = \mathcal{F}(v_1) - \mathcal{F}(v_2)$, and both solutions exist according to~\cref{thm:two:sources:westervelt:linear:solution:existence:uniqueness:mixed:boundary:space:dependent}. Then $w$ solves
\begin{equation}
\label{eq:westervelt:nonlinear:periodic:fixed:point:quadratic}
\begin{cases}
bw_{tt} - s \Delta w - \Delta w_t = \eta \left((v_1^2)_{tt} - (v_2^2)_{tt} \right)  & \text{in } (0,T) \times \Omega,\\
\beta w_t + \gamma w + \nabla w \cdot \vecn = 0 & \text{on } (0,T) \times \partial\Omega, \\
w(0) = w(T), \, w_t(0) = w_t(T) & \text{in } \Omega. \\
\end{cases}   
\end{equation}
Estimating $\eta \left((v_1^2)_{tt} - (v_2^2)_{tt} \right)$ like before, we apply~\cref{thm:two:sources:westervelt:linear:solution:existence:uniqueness:mixed:boundary:space:dependent} which yields 
\begin{equation}
    \nonumber
    ||w||_{\solspace} \leq C ||\eta \left((v_1^2)_{tt} - (v_2^2)_{tt} \right)||_{L^2(0,T;L^2(\Omega))}.
\end{equation}
We further have
\begin{align}
    (v_1^2)_{tt} - (v_2^2)_{tt} &= 2 (v_{1_t}^2 + v_1 v_{1_{tt}} - v_{2_t}^2 - v_2 v_{2_{tt}}) \\ \nonumber
    & = 2 \left( (v_{1_t} + v_{2_t})(v_{1_t} - v_{2_t}) +  v_1 v_{1_{tt}} -  v_2 v_{2_{tt}} \right) \\ \nonumber
    & = 2 \left( (v_{1_t} + v_{2_t})(v_{1_t} - v_{2_t}) +  v_1 v_{1_{tt}} -  v_2 v_{2_{tt}} - v_1 v_{2_{tt}} + v_1 v_{2_{tt}} \right) \\ \nonumber
    & = 2 \left( (v_{1_t} + v_{2_t})(v_{1_t} - v_{2_t}) + v_1(v_{1_{tt}} -  v_{2_{tt}}) + v_{2_{tt}} (v_1 - v_2) \right).
\end{align}
This yields the estimate
\begin{align}
    \nonumber
    & || (v_{1_t} + v_{2_t})(v_{1_t} - v_{2_t}) + v_1(v_{1_{tt}} -  v_{2_{tt}}) + v_{2_{tt}} (v_1 - v_2) ||_{L^2(0,T;L^2(\Omega))} \\ \nonumber
    & \leq  ||v_{1_t} + v_{2_t}||_{L^4(0,T;L^4(\Omega))}  ||v_{1_t} - v_{2_t}||_{L^4(0,T;L^4(\Omega))}  \\ \nonumber
    & \quad + ||v_1||_{L^\infty(0,T;L^\infty(\Omega))} ||v_{1_{tt}} -  v_{2_{tt}}||_{L^2(0,T;L^2(\Omega))} \\ \nonumber
    & \quad + ||v_1 - v_2||_{L^\infty(0,T;L^\infty(\Omega))} ||v_{2_{tt}}||_{L^2(0,T;L^2(\Omega))} \\ \nonumber
    &= ||v_{1_t} + v_{2_t}||_{L^4(0,T;L^4(\Omega))}  ||(v_{1} - v_{2})_t||_{L^4(0,T;L^4(\Omega))}  \\ \nonumber
    & \quad + ||v_1||_{L^\infty(0,T;L^\infty(\Omega))} ||(v_{1} -  v_{2})_{tt}||_{L^2(0,T;L^2(\Omega))} \\ \nonumber 
    & \quad + ||v_1 - v_2||_{L^\infty(0,T;L^\infty(\Omega))} ||v_{2_{tt}}||_{L^2(0,T;L^2(\Omega))} .
\end{align}
Applying the estimates from above we obtain 
\begin{align}
    ||\mathcal{F}(v_1) - \mathcal{F}(v_2)||_{\solspace} \leq rC_0 ||v_1 - v_2||_{\solspace},
\end{align}
where $C_0$ reads
\begin{align}
    4 C ||\eta||_{L^\infty(\Omega)} (C_{H^{13/8}(\Omega) \rightarrow L^\infty(\Omega)} & C_{H^{3/4}(0,T) \rightarrow L^\infty(0,T)} \\ \nonumber
    &+ C_{H^{1/4}(0,T) \rightarrow L^4(0,T)}^2 C_{H^{9/8}(\Omega) \rightarrow L^4(\Omega)}^2).
\end{align}
We further choose $r > 0$ small enough such that $rC_0 < 1$ yielding that $\mathcal{F}$ is a contraction on $B_r(0)$ and, thus, has a unique fixed point $u \in B_r(0)$ with $\mathcal{F}(u) = u$ solving~\eqref{eq:westervelt:nonlinear:periodic}. This concludes the proof.

\qed
\section{Multiharmonic ansatz for numerical computation of forward solutions}
\label{appendix:multiharmonic:ansatz}
In order to formulate a multiharmonic ansatz, we consider a boundary source of the form
\begin{align}
    g(t,x):= \real{\sum_{m=0}^N\widehat{g}_m(x)e^{\imath m \omega t}}.
\end{align}
This is exactly the form of the boundary sources obtained by our reference states $u^0$ (see Remark ~\ref{rem:example}). The periodicity imposed on the solution to~\eqref{eq:westervelt} and the source motivates the use of a multi-harmonic ansatz. By the given periodicity of the source $g(t,x)$, the space where we will project~\eqref{eq:westervelt} on has to take the form
\begin{align}
    X_{N} := \left\{ \real{\sum_{m=0}^N \alpha_{m}(x) e^{\imath m \omega  t}} : \alpha_{m} \in H^2(\Omega;\mathbb{C}) \right\}.
\end{align}
For $u,v \in X_{N}$, by the finite Cauchy product we have the following identity:
\def\hw{\widehat{w}}
\begin{align}
\nonumber
    &\text{Proj}_{X_N}(u^N v^N) = \frac{1}{2}\real{ \sum_{m=0}^{N} e^{\iota m \omega t} \sum_{j = 0}^{m}  \hat{u}_j(x) \hat{v}_{m-j}(x)} \\ \nonumber
    &+  \frac{1}{2} \real{ \sum_{j=0}^N \overline{\hat{u}_j(x)} \hat{v}_{j}(x) + \sum_{m=1}^N e^{\iota m \omega t} \sum_{k=m:2}^{2N-m}\left[ \overline{\hat{u}_{\frac{k-m}{2}}(x)} \hat{v}_{\frac{k+m}{2}}(x) + \hat{u}_{\frac{k+m}{2}}(x) \overline{\hat{v}_{\frac{k-m}{2}}(x)}\right] }.
\end{align}
We further have 
\begin{align}
    \nonumber
    \text{Proj}_{X_N}\left(b u^N_{tt} - s \Delta u^N - \Delta u^N_t \right) &= b\omega^2 \frac{1}{2} \sum_{m=0}^N - \hu_m(x) m^2 e^{\iota m \omega t} - \overline{\hu_m(x)} m^2 e^{-\iota m \omega t} \\ \nonumber
    & - s \frac{1}{2} \sum_{m=0}^N \Delta \hu_m(x) e^{\iota m \omega t} + \Delta \overline{\hu_m(x)} e^{-\iota m \omega t} \\ \nonumber 
    & - \iota \omega \frac{1}{2} \sum_{m=0}^N  \Delta\hu_m(x) m e^{\iota m \omega t} - \Delta\overline{\hu_m(x)} m e^{-\iota m \omega t},
\end{align}
hence projecting~\eqref{eq:westervelt} onto $X_{N}$ and by linear independence of $(e^{\imath m \omega t})_{m \in \mathbb{N}_0}$ we obtain that for $x \in \Omega$, $\hu_{m}(x)$ has to solve for
\begin{align}
   \label{eq:westervelt:helmholtz:system:nn}
    -m^2 \omega^2 b(x)\, \hu_{m}(x) - (s(x) + \imath m \omega) \Delta \hu_{m}(x) = - \tfrac{\eta(x) \omega ^2m^2}{2}&\Bigl(\sum_{k=0}^m \hu_{k}(x) \hu_{m-k}(x) \\ \nonumber 
    &+ 2\sum_{k=m:2}^{2N-m} \overline{\hu_{\frac{k-m}{2}}(x)} \hu_{\frac{k+m}{2}}(x)\Bigr),
\end{align}
with boundary condition (note that we set $\beta = 0$)
\begin{align}
    \gamma  \hat{u}_{m}(x) + \nabla \hat{u}_{m}(x) \cdot \vecn = \hat{g}_{m}(x),
\end{align}
in the case of $m > 0$ and if $m = 0$ we have
\begin{align}
    -s(x)  \Delta \hu_{0}(x) = 0,
\end{align}
in $\Omega$ and on the boundary $\partial \Omega$, $\hu_0$ has to fulfill
\begin{align}
    \gamma \hat{u}_{0}(x) + \nabla \hat{u}_{0}(x) \cdot \vecn = \hat{g}_{0}(x).
\end{align}
Dividing~\eqref{eq:westervelt:helmholtz:system:nn} by $(s + \imath m \omega)$ we obtain
\begin{align}
   \label{eq:westervelt:helmholtz:system:normalised}
    - \kappa_m(x)^2 m^2\, \hu_{m}(x) - \Delta \hu_{m}(x) = - \tfrac{\eta(x) \kappa_m(x)^2 m^2}{2b(x)} \Bigl(&\sum_{k=0}^m \hu_{k}(x) \hu_{m-k}(x)  \\ \nonumber 
    &+ 2\sum_{k=m:2}^{2N-m} \overline{\hu_{\frac{k-m}{2}}(x)} \hu_{\frac{k+m}{2}}(x)\Bigr),
\end{align}
where we set $\kappa_m(x)^2:= \tfrac{\omega^2 b^0}{s^0 + \imath m \omega}$. Well-posedness and convergence of an iterative scheme is discussed in~\cite{RK2025}.

Analogously, we project~\eqref{eq:westervelt:frechet:derivative} onto $X_{N}$ and find that for $m \neq 0$, $\hdu_{m}$ has to solve
\begin{align}
    \label{eq:westervelt:frechet:derivative:helmholtz:system}
    & - \kappa_{m}(x)^2 m^2 \hdu_{m}(x) - \Delta \hdu_{m}(x) = \\ \nonumber
    & -\eta^0 \tfrac{m^2 \omega^2}{s^0 + \imath m \omega} \Bigl( \sum_{i=0}^m \hu^0_{i}(x)\hdu_{m-i}(x) + \sum_{l=m:2}^{2N-m} \Bigl[ \overline{\hu^0_{\tfrac{l-m}{2}}(x)}\hdu_{\tfrac{l+m}{2}}(x)  \\ \nonumber 
    & \qquad\qquad\qquad \qquad \qquad \qquad\qquad\qquad\qquad\qquad+ \hu^0_{\tfrac{l+m}{2}}(x) \overline{\hdu_{\tfrac{l-m}{2}}(x)} \Bigr]\Bigr) \\ \nonumber
    & - d\eta(x) \tfrac{m^2 \omega^2}{2(s^0 + \imath m \omega)} \left( \sum_{i=0}^m \hu^0_{i}(x) \hu^0_{m-i}(x) + 2\sum_{l=m:2}^{2N-m} \overline{\hu^0_{\tfrac{l-m}{2}}(x)} \hu^0_{\tfrac{l+m}{2}}(x) \right)\\ \nonumber
    & +db(x) \tfrac{m^2 \omega^2}{s^0 + \imath m \omega} \hu^0_m(x) + ds(x) \tfrac{1}{s^0 + \imath m \omega} \Delta \hu^0_m(x),
\end{align}
with the homogeneous boundary conditions 
\begin{align}
    \label{eq:westervelt:frechet:derivative:helmholtz:boundary}
    \gamma(x) \hdu_{m}(x) + \nabla \hdu_{m} (x) \cdot \vecn = 0.
\end{align}
For $m=0$ we obtain that $\hdu_0$ has to solve
\begin{align}
    -s^0\Delta \hdu_0(x) = ds(x) \Delta \hu^0_0,
\end{align}
in $\Omega$ and is also equipped with homogeneous Robin boundary conditions. 

The adjoint state $p=K^*y$ for some $y\in L^2(0,T;L^2(\Sigma))$ has to fulfill 
\begin{align}
    &\int_0^T \int_\Omega \du [(b^0 - 2\eta^0u^0)p_{tt} - s^0 \Delta p + \Delta p_t]\,dx\,dt  \\ \nonumber 
    &+ \int_0^T \int_{\partial \Omega} \du [s^0(\gamma p + \nabla p \cdot \vecn) - (\gamma p_t + \nabla p_t \cdot \vecn)]\, dS(x)dt \\ \nonumber 
   &= \int_0^T\int_\Sigma \du y\, dS(x)\,dt, \text{ for all  }\du\in\solspace,
\end{align}
hence $p$ has to solve the following PDE in a weak sense
\begin{equation}
\label{eq:westervelt:adjoint:state}
\begin{cases}
(b^0 - 2 \eta^0 u^0) p_{tt} - s^0 \Delta p + \Delta p_t = 0 & \text{in}\, (0,T) \times \Omega,\\
\gamma ( s^0p - p_t) + \nabla (s^0p - p_t) \cdot \vecn = y & \text{on}\,  (0,T) \times \Sigma , \\
\gamma ( s^0p - p_t) + \nabla (s^0p - p_t) \cdot \vecn= 0 & \text{on}\,  (0,T) \times \partial\Omega \setminus \Sigma , \\
p(0,x) = p(T,x), \, p_t(0,x) = p_t(T,x) & x \in \Omega. \\
\end{cases}   
\end{equation}
The space dependent~\textit{gradient} with respect to $db$, $ds$, $d\eta$ reads
\begin{align}
\left(-\int_0^T u^0_{tt} p \,dt, \int_0^T \Delta u^0 p \,dt, \int_0^T ((u^0)^2)_{tt} p\,dt\right).
\end{align}
In view of ~\eqref{eq:westervelt:adjoint:state} the adjoint state $p$ projected to $X_N$ has to solve 
\newcommand{\hp}{\hat{p}}
\begin{align}
    & - \overline{\kappa_{m}(x)^2} m^2\hp_{m}(x) - \Delta \hp_{m}(x) = \\ \nonumber
    & -\eta^0 \tfrac{1}{s^0 - \imath m \omega} \Bigl( \sum_{j=0}^m j^2\omega^2 \overline{\hu^0_{j}(x)} \hp_{m-j}(x) \\ \nonumber
    & \qquad \qquad  \qquad + \sum_{l=m:2}^{2N-m} l^2 \left[\hu^0_{\tfrac{l-m}{2}}(x) (\tfrac{l+m}{2})^2\overline{\hp_{\tfrac{l+m}{2}}(x)} + \overline{\hu^0_{\tfrac{l+m}{2}}(x)} (\tfrac{l-m}{2})^2 \hp_{\tfrac{l-m}{2}}(x) \right]\Bigr) \\ \nonumber
\end{align}
equipped with the Robin boundary condition 
\begin{align}
\nonumber
\gamma(x) \hp_{m}(x) + \nabla \hp_{m} (x) \cdot \vecn = \mathds{1}_\Sigma(x) \tfrac{1}{s^0 - \imath m \omega} \hat{y}_{m} (x).
\end{align}